\title[Character theory and Euler characteristics for infinite groups]
{Character theory and Euler characteristic\\for orbispaces and infinite groups}
\author{Wolfgang L\"uck}
\address{Mathematisches Institut, Universit\"at Bonn\\
  Endenicher Allee 60\\
  53115 Bonn, Germany}
\email{wolfgang.lueck@him.uni-bonn.de}
\author{Irakli Patchkoria}
\address{Department of Mathematics\\
  University of Aberdeen\\
  Fraser Noble Building\\
Aberdeen AB24 3UE, UK}
\email{irakli.patchkoria@abdn.ac.uk}
\author{Stefan Schwede}
\address{Mathematisches Institut, Universit\"at Bonn\\
  Endenicher Allee 60\\
  53115 Bonn, Germany}
\email{schwede@math.uni-bonn.de}
\date{\today}
\keywords{Centralizer, Euler characteristic, equivariant, Morava $K$-theory, orbispace, proper.}
\subjclass[2020]{Primary: 55P42, 55R35. Secondary: 55P91, 19L, 20F65.}
\DeclareMathOperator{\colim}{colim}
\DeclareMathOperator{\hocolim}{hocolim}
\DeclareMathOperator{\cont}{cont}
\DeclareMathOperator{\con}{con}
\DeclareMathOperator{\gl}{gl}
\DeclareMathOperator{\Hom}{Hom}
\DeclareMathOperator{\Mono}{Mono}
\DeclareMathOperator{\Img}{Im}
\DeclareMathOperator{\Orb}{Orb}
\DeclareMathOperator{\Glo}{Glo}
\DeclareMathOperator{\Cl}{Cl}
\DeclareMathOperator{\Out}{Out}
\DeclareMathOperator{\Id}{Id}
\DeclareMathOperator{\op}{op}
\DeclareMathOperator{\stab}{stab}
\DeclareMathOperator{\orb}{orb}
\DeclareMathOperator{\res}{res}
\DeclareMathOperator{\Gal}{Gal}
\newcommand{\mF}{{\mathbb F}}
\newcommand{\mN}{{\mathbb N}}
\newcommand{\mQ}{{\mathbb Q}}
\newcommand{\mZ}{{\mathbb Z}}
\newcommand{\Fin}{{\mathcal F}in}
\newcommand{\Lc}{{\mathcal L}}
\newcommand{\Oc}{{\mathcal O}}
\newcommand{\bT}{{\mathbf T}}
\newcommand{\iso}{\cong}
\newcommand{\tensor}{\otimes}
\newcommand{\xra}{\xrightarrow}
\newcommand{\xla}{\xleftarrow}
\newcommand{\bs}{\backslash}
\newcommand{\dbs}{\backslash\!\!\backslash}
\newcommand{\un}{\underline}
\newcommand{\td}[1]{\langle #1\rangle}
\renewcommand{\to}{\longrightarrow}
\numberwithin{equation}{section}
\newtheorem{theorem}[equation]{Theorem}
\newtheorem{lemma}[equation]{Lemma}
\newtheorem{cor}[equation]{Corollary}
\newtheorem{prop}[equation]{Proposition}
\newtheorem*{theorem*}{Theorem}
\theoremstyle{definition}
\newtheorem{defn}[equation]{Definition}
\newtheorem{remark}[equation]{Remark}
\newtheorem{eg}[equation]{Example}
\newtheorem{construction}[equation]{Construction}
\begin{document}

\begin{abstract}
  Given a discrete group $G$ with a finite model for $\underline{E}G$,
  we study $K(n)^*(BG)$ and $E^*(BG)$, where $K(n)$ is the $n$-th Morava $K$-theory
  for a given prime and $E$ is the height $n$ Morava $E$-theory.
  In particular we generalize the character theory of Hopkins, Kuhn and Ravenel
  who studied these objects for finite groups.
  We give a formula for a localization of $E^*(BG)$
  and the $K(n)$-theoretic Euler characteristic of $BG$ in terms of centralizers.
  In certain cases these calculations lead to a full computation of $E^*(BG)$,
  for example when $G$ is a right angled Coxeter group, and for $G=SL_3(\mZ)$.
  We apply our results  to the mapping class group $\Gamma_\frac{p-1}{2}$ for an odd prime $p$
  and to certain arithmetic groups, including the symplectic group $Sp_{p-1}(\mZ)$
  for an odd prime $p$ and $SL_2(\mathcal{O}_K)$ for a totally real field $K$. 
\end{abstract}

\maketitle

\tableofcontents

\section*{Introduction}

The purpose of this paper is to extend the generalized character theory of
Hopkins, Kuhn and Ravenel \cite{HKR} from finite groups to discrete groups $G$
that admit a finite model for the classifying space of proper actions.
We demonstrate the usefulness of our generalization by explicitly calculating
the Morava $K$-theory Euler characteristic and the $E$-cohomology of $B G$
for several interesting examples of such groups that arise
in geometric group theory and arithmetic, including the mapping class group
$\Gamma_{\frac{p-1}{2}}$ for an odd prime $p$, the symplectic group $Sp_{p-1}(\mZ)$
for $p\geq 5$, and $SL_2(\mathcal{O}_K)$ for a totally real field $K$.

To put our results into context, we briefly review some of the highlights of \cite{HKR}.
The Atiyah--Segal completion theorem \cite{AScompletion}
identifies the complex $K$-theory of the classifying space of a finite group $G$
with the completion of the complex representation ring $R(G)$ at its augmentation ideal.
And classical character theory identifies $R(G) \otimes \mathbb{C}$
with the ring $\Cl(G, \mathbb{C})$ of complex valued class functions on $G$.
Hopkins, Kuhn and Ravenel generalize these ideas to higher chromatic heights:
in \cite{HKR}, they study the generalized cohomology ring $E^*(BG)$
for finite groups $G$ and for specific complex oriented cohomology theories $E$ that generalize
$p$-adic complex $K$-theory.

Let $K(n)$ denote the $n$-th Morava $K$-theory at a prime $p$,
with the coefficient ring $K(n)^*=\mathbb{F}_p[v_n^{\pm 1}]$,
where $v_n$ is of degree $-2(p^n-1)$. 
A result of Ravenel \cite{RavBG} shows that for $G$ finite, $K(n)^*(BG)$
is a finitely generated graded vector space over the evenly graded field $K(n)^*$.
Hence it is meaningful to define the $K(n)$-theory Euler characteristic as
\[\chi_{K(n)}(BG) = \dim_{K(n)^*} K(n)^{\text{even}}(BG)-\dim_{K(n)^*} K(n)^{\text{odd}}(BG).\] 
One of the main results of Hopkins, Kuhn and Ravenel computes $\chi_{K(n)}(BG)$
in group-theoretic terms. The group $G$ acts by simultaneous conjugation
on the set $G_{n,p}$ of $n$-tuples of pairwise commuting elements of $p$-power order.
By \cite[Theorem B]{HKR}, the Euler characteristic $\chi_{K(n)}(BG)$ equals
the number of $G$-orbits of $G_{n,p}$. 
A particularly interesting special case is when $K(n)^*(BG)$ is concentrated in even degrees;
then the formula of Hopkins, Kuhn and Ravenel computes the dimension
of $K(n)^*(BG)$ as a graded $K(n)^*$-module.

The importance of this computation becomes clear when trying to understand $E^*(BG)$,
where $E$ is the Morava $E$-theory spectrum at the height $n$ and prime $p$.
Hopkins, Kuhn and Ravenel introduce a ring $L(E^*)$
which is faithfully flat over $p^{-1}E^*$, and hence a rational algebra.
Roughly speaking, $L(E^*)$ can be thought of as a character ring obtained
by adjoining enough $p$-th power roots of unity to $E^*$.
In \cite[Theorem C]{HKR}, Hopkins, Kuhn and Ravenel exhibit an isomorphism
\[L(E^*) \otimes_{E^*} E^*(B G) \cong \Cl_{n,p}(G; L(E^*))\]
to the ring of `generalized class functions', i.e.,
functions from $G_{n,p}$ to $L(E^*)$ that are constant on orbits of the conjugation action by $G$.
These results opened the door to an integral understanding of $E^*(BG)$ for finite groups $G$.
Since $E^*(BG)$ is finitely generated as a graded $E^*$-module
by \cite[Corollary 4.4]{GS}, a result of Strickland \cite{StricklandMorava}
implies that if $K(n)^*(BG)$ is even,
then $E^*(BG)$ is even and free as a graded $E^*$-module. 
Hence the above rational computation of $E^*(BG)$ leads to the integral computation. \smallskip

{\bf Our results.}
We generalize the results of Hopkins, Kuhn and Ravenel
from finite groups to infinite discrete groups with a finite model for $\underline{E}G$,
the classifying $G$-space for proper actions.
Many interesting groups in geometric group theory and arithmetic fall into this class,
and often important moduli spaces are modeled
by the quotient space $\underline{B}G=G \bs \underline{E}G$, see for example \cite{Lsurvey}.
Our main motivation is to study new invariants for such groups,
and to see how much chromatic homotopy theory is reflected in the geometric group theory
and number theory.
Another motivation is the work of the first author on Chern characters
and rational equivariant cohomology theories.
The papers \cite{LCrelle1, LCrelle2, LChern} exhibit formulas for such cohomology theories,
and in particular for the rational $K$-theory, of the classifying space $BG$,
which are very similar to the formulas of Hopkins--Kuhn--Ravenel.
Analogous formulas are also provided by Adem \cite{Adem}
in the case of virtually torsion free groups.
All these formulas generalize the splitting for the rational equivariant $K$-theory which goes back to Artin, tom Dieck and Atiyah--Segal.
The present paper is a common generalization of \cite{HKR}
and \cite{Adem, LCrelle1, LCrelle2, LChern}:
On the one hand we extend the results of \cite{HKR}
from finite groups to certain infinite groups,
and on the other hand we generalize the results of \cite{Adem, LCrelle1, LCrelle2, LChern}
from the chromatic height $1$ to arbitrary chromatic height.

Our first main result, proved as Corollary \ref{cor:euler morava quotients},
generalizes the group-theoretic formula
for the Morava $K$-theory Euler characteristic \cite[Theorem B]{HKR} to infinite groups.\smallskip

{\bf Theorem A.} {\em
Let $G$ be a discrete group that admits a finite $G$-CW-model for $\un{E}G$. Then 

\[\chi_{K(n)}(B G)= \sum_{[g_1,\dots,g_n] \in G \bs G_{n,p} }\chi_\mQ(B C\td{g_1, \dots, g_n}).
\]
}\smallskip

Here $G_{n,p}$ denotes the set of $n$-tuples
of pairwise commuting elements of $G$, each of which has $p$-power order.
The group $G$ acts on $G_{n,p}$ by simultaneous conjugation.
Further, $C\td{g_1, \dots, g_n}$ is the centralizer of the abelian subgroup
$\td{g_1, \dots, g_n}$ generated by $g_1, g_2, \dots, g_n$,
and $\chi_{\mQ}$ denotes the rational homology Euler characteristic. 
Since $G$ admits a finite model for $\underline{E}G$,
the set $G \bs G_{n,p}$ of orbits is finite. 

In Theorem A, a new feature shows up for infinite discrete groups that is not present
in the work of Hopkins--Kuhn--Ravenel. If $G$ is finite, then so are
the centralizers  $C\td{g_1, \dots, g_n}$, so their classifying spaces
are rationally acyclic, and $\chi_\mQ(B C\td{g_1, \dots, g_n})=1$.
Hence for finite groups, the right hand side of the formula in Theorem A
is a fancy way to write the number of $G$-orbits on $G_{n,p}$.
For infinite discrete groups, in contrast,
the centralizers occurring in Theorem A need not be finite nor rationally acyclic.
So the rational Euler characteristic $\chi_\mQ(B C\td{g_1, \dots, g_n})$
need not be $1$, and the right hand side in Theorem A
reflects combinatorial properties of the group $G$.  
In Example \ref{dihedral}, we illustrate this phenomenon
with the amalgamated product $G=D_8 \ast_{C_4} D_8$
of two copies of the dihedral group of order $8$ along the cyclic group of order $4$.
One of the centralizers in this case is isomorphic to $C_4 \times \mathbb{Z}$ and hence not rationally acyclic.

Our second main result, proved as Corollary \ref{cor:Characters for BG},
generalizes the character isomorphism \cite[Theorem C]{HKR} to infinite groups.\smallskip

{\bf Theorem B.} {\em
Let $G$ be a discrete group that admits a finite $G$-CW-model for $\un{E}G$. Then the character map 
  \[ \chi_{n,p}\ : \ L(E^*)\tensor_{E^*} E^*(B G) \ \xra{\iso}\
    \prod_{[g_1,\dots,g_n] \in G \bs G_{n,p}} H^*( B C\td{g_1, \dots, g_n}; L(E^*) ) \]
  is an isomorphism. 
}\smallskip

Again, the situation is more subtle than for finite groups.
If $G$ is finite, then $B C\td{g_1, \dots, g_n}$ is rationally acyclic,
and hence $L(E^*)$-acyclic. So for finite groups, the right hand side
of the character isomorphism is a free $L(E^*)$-module whose rank is the number
of $G$-orbits on $G_{n,p}$, and thus isomorphic to the
generalized class functions $\Cl_{n,p}(G; L(E^*))$.
As already mentioned above,
for infinite discrete groups, the centralizers $C\td{g_1, \dots, g_n}$
need not be finite nor rationally acyclic.
So the factors on the right hand side of the character isomorphism need not
simply be free $L(E^*)$-modules of rank $1$.
In the special case of chromatic height $1$,
the theory $E$ is the $p$-adic $K$-theory and $L(E^*)=\mathbb{Q}_p(\zeta_{p^{\infty}})[u^{\pm 1}]$,
where $u$ is the Bott class.
Hence for $n=1$,  Theorem B recovers a $p$-adic version
of \cite[Theorem 0.1]{LCrelle2} and the second formula in \cite[Theorem 4.2]{Adem}.

Our third main result relates the Morava $K$-theory Euler characteristic of an infinite group
to its orbifold Euler characteristic.
The latter is defined for virtually torsion-free groups $G$
with finite virtual cohomological dimension, as follows:
If $\Gamma$ is any finite index torsion-free subgroup of $G$, then
\[ \chi_{\orb}(BG)=\chi_{\mathbb{Q}}(B\Gamma)/[G:\Gamma] ,\]
compare \cite{Walleuler, SerreVCD};
here $\chi_{\mathbb{Q}}$ is again the rational homology Euler characteristic,
which coincides with the classical Euler characteristic for a finite complex.
This rational number often reflects geometric and arithmetic properties of the group $G$.
A good example is provided by the mapping class group $\Gamma_g^1$
of a closed orientable surface of genus $g \geq 1$ with one marked point.
By a theorem of Harer and Zagier \cite{HZ},
the orbifold Euler characteristic $\chi_{\orb}(B\Gamma_g^1)$ equals the special value $\zeta(1-2g)$
of the Riemann $\zeta$-function. 

In Theorem \ref{thm:chi_K(n)_chi_orb} we establish the following relation:\smallskip

{\bf Theorem C.} {\em
  Let $G$ be a discrete group that admits a finite $G$-CW-model for $\un{E}G$,
  and such that every torsion element in $G$ has $p$-power order. Then 
\[ \chi_{K(n)}(BG) = \sum_{[g_0,g_1,\dots,g_n] \in G \bs G_{n+1,p}} \chi_{\orb}(BC\td{g_0,g_1, \dots, g_n}).
\]
}\smallskip

As we explain in more detail in Remark \ref{rk:chromatic sequence},
these results motivate that for discrete groups with only $p$-power torsion
for a fixed prime $p$, the orbifold Euler characteristic
can be thought as the chromatic height $-1$ invariant.
Indeed, we get a chromatic sequence starting at `height $-1$':
\[\chi_{\orb}, \chi_{\mathbb{Q}}, \chi_{K(1)}, \dots, \chi_{K(n)}, \dots, \]
where the $i$-th term can be obtained from the $j$-th term for $j<i$
using the tuples of $i-j$ many commuting elements of finite order.
\smallskip In Remark \ref{rk:chromatic sequence} we also explain how this result is related to the paper of Yanovski \cite{Yanovski}. \smallskip

{\bf Computations and applications.}
The long final Section \ref{sec:examples} discusses several classes
of infinite groups $G$ with finite model for $\un{E}G$,
and it illustrates our general theory with concrete computations.
We fully compute Morava $K$-theory and $E$-theory for right angled Coxeter groups
and the general linear group $SL_3(\mathbb{Z})$.
Computations for the right angled Coxeter groups generalize the results of \cite{DegL}
to higher chromatic heights. The computations for $SL_3(\mathbb{Z})$
partially recover the results of \cite{TezYag}.
Further, we compute $\chi_{K(n)}(BG)$ and  $L(E^*)\tensor_{E^*} E^*(B G)$
for $G$ a crystallographic group of the form $\mZ^n \rtimes \mZ/p$.
At an odd prime $p$, we also give explicit formulas
for the mapping class group $\Gamma_\frac{p-1}{2}$ of a closed oriented surface
of genus $\frac{p-1}{2}$, for $G=SL_2(O_K)$, where $K$ is a totally real field,
and for $G=GL_{p-1}(\mZ), SL_{p-1}(\mZ), Sp_{p-1}(\mZ)$.
These formulas involve explicit number theoretic invariants such as special values
of zeta functions and class numbers. We list some of them here; the results
are all new for the height $n>1$, and some also for $n=1$:

\begin{itemize}

\item For every totally real field $K$ and odd prime $p$, 
\[\chi_{K(n)}(BSL_2(\Oc_K))= 2\zeta_K(-1)+\sum_{(H)} ({\vert H_{(p)} \vert}^n-\tfrac{2}{\vert H \vert}),\]
where $\zeta_K$ is the Dedekind zeta function of $K$, see Proposition \ref{prop:Euler sl2}.
The sum runs over the conjugacy classes of the maximal finite subgroups (which are all abelian).
The formula generalizes Brown's formula \cite[Section 9.1]{KBro1}
for $\chi_{\mathbb{Q}}(BSL_2(\Oc_K))$ to higher chromatic heights.
The formula was already known for $n=1$ case, see \cite[Example 4.4]{Adem}. 

\item For $p\geq 5$ and $G=GL_{p-1}(\mZ)$,
  we get the identity $\chi_{K(n)}(BGL_{p-1}(\mZ))=\chi_{\mathbb{Q}}(BGL_{p-1}(\mZ))$,
  see Proposition \ref{prop:Euler_KN_equals_rational_GL}.
  The invariant $\chi_{\mathbb{Q}}(BGL_{p-1}(\mZ))$ is known to vanish for $p\geq 13$ \cite[Theorem 0.1 (a)]{Hor}. However, for the height $n$ Morava $E$-theory at the prime $p$, the character theory still provides an interesting formula
\begin{align*}\quad \quad \quad L(E^*)\tensor_{E^*} E^*(BGL_{p-1}(\mZ)) & \cong  \\ H^*(BGL_{p-1}(\mZ); L(E^*)) & \oplus \bigoplus_{ \frac{p^n-1}{p-1} \vert \Cl(\mathbb{Q}(\zeta_p)) \vert} (L(E^*) \oplus L(E^*)[1])^{\otimes_{L(E^*)} \frac{p-3}{2}},\end{align*}
where $\vert \Cl(\mathbb{Q}(\zeta_p)) \vert$ is the class number of the cyclotomic field $\mathbb{Q}(\zeta_p)$. The rational cohomology $H^*(BGL_{p-1}(\mZ); \mathbb{Q})$ is not known in general. However it is known for $p=3, 5,7$, so that for these primes we can compute
$L(E^*)\tensor_{E^*} E^*(BGL_{p-1}(\mZ))$ as a graded $L(E^*)$-module;
see Remark \ref{rk:GL_2(Z) for p=3} for the case $p=3$.
Again the case $n=1$ was already considered in \cite[Example 4.5]{Adem}. 

\item For any odd prime $p$, we establish the formula
  \[\chi_{K(n)}(BSp_{p-1}(\mZ))=
    \chi_{\mathbb{Q}}(BSp_{p-1}(\mZ))+2^{\frac{p-1}{2}} h_p^{-}\cdot \tfrac{p^n-1}{p-1},\]
where $h_p^{-}$ is the relative class number of the cyclotomic field $\mathbb{Q}(\zeta_p)$,
see Proposition \ref{prop:Euler char Sp and Morava E Sp}. 

\item For a prime $p\geq 5$, we derive the formula
\[\chi_{K(n)}(B\Gamma_{\frac{p-1}{2}})=\chi_{\mathbb{Q}}(B\Gamma_{\frac{p-1}{2}})+\tfrac{(p^n-1)(p+1)}{6} ,\]
see Proposition \ref{prop:Euler char Gamma and Morava E Gamma}.
\end{itemize}\smallskip

{\bf Orbispaces.}
At the technical level, a key tool is the systematic use of orbispaces
in the sense of \cite{gepner-henriques}.
We generalize several aspects of the generalized character theory
of Hopkins, Kuhn and Ravenel from the class of finite $G$-CW-complexes for finite groups $G$
to the class of compact orbispaces.
The connection to geometric group theory stems from the fact that
compact orbispaces include the global classifying spaces
of all infinite discrete groups with finite $G$-CW-model for $\un{E}G$.
More generally for any discrete group $G$ and finite proper $G$-CW complex $A$,
the global quotient orbispace $G \dbs A$ is compact.
Similar generalizations were investigated by Lurie in \cite{Elliptic},
though applications to infinite groups have not been explored there.
See the remark below at the end of the introduction. 

The class of compact orbispaces provides a unified approach
to the different kinds of Euler characteristics relevant for this paper.
The relationships between these invariants are conceptually explained by explicit constructions
in the category of orbispaces, such as the formal loop space introduced
in Construction \ref{con:shift}.
However, for the readers who prefer to avoid orbispaces
we provide Remarks \ref{remark: alternative proof 1} and \ref{remark: alternative proof 2},
where alternative ways of proving the above theorems are sketched.
These approaches use equivariant cohomology theories and $G$-CW decompositions along the lines of \cite{LBook, LChern}, and might be more accessible for the readers working
in geometric group theory. \smallskip

{\bf Organization.}
Here is a more detailed outline of the contents of this paper.
In Section \ref{sec: orbi section} we recall generalities on orbispaces.
The global classifying space $B_{\gl}G$ of a discrete group $G$
is introduced in Example \ref{example: global quotient}.
We recall the concept of {\em compact} orbispace and record
some finiteness results in Theorem \ref{thm:compact_finiteness}.
An easy but fundamental fact is that when $G$ admits a finite model for $\un{E}G$,
then its global classifying space $B_{\gl}G$ is a compact orbispace.
In Theorem \ref{thm:basis}\  we calculate the orbispace Burnside ring $A_{\orb}$,
the universal recipient for additive invariants of compact orbispaces;
this calculation is used in Definition \ref{def:orb_chi}
to define the orbispace Euler characteristic $\chi_{\orb}[X]$.
This invariant generalizes the orbifold Euler characteristic of virtually torsion-free
groups with finite $\un{E}G$, by the relation $\chi_{\orb}(BG)=\chi_{\orb}[B_{\gl}G]$.

Section \ref{sec: Morava Euler char general}
studies the Morava $K$-theory Euler characteristic of compact orbispaces and discrete groups,
possibly infinite.
The main result is Theorem \ref{thm:Euler}; for a compact orbispace $X$,
it identifies the Morava $K$-theory Euler characteristic of the underlying space
as the rational Euler characteristic of a certain space $X\td{\mZ_p^n}$
made from the values of $X$ at all finite quotients of $\mZ_p^n$.
This result generalizes the Hopkins--Kuhn--Ravenel formula
$\chi_{K(n)}(BG) = |G\bs G_{n,p}|$ for finite groups $G$,
which is used as an input in the proof.
Inspired by the work of Stapleton \cite{Stap}, we also observe in Theorem \ref{thm:Euler} that
for every compact orbispaces $X$
and $m,n\geq 1$, the $K(m+n)$-Euler characteristic of the underlying space of $X$
agrees with the $K(m)$-Euler characteristic of the space  $X\td{\mZ_p^n}$.
In Corollary \ref{cor:euler morava quotients}
we specialize the theorem to global quotient orbispaces.
When applied to $B_{\gl}G$ for discrete groups $G$ with finite model for $\un{E}G$,
it yields our Theorem A, as well as the identity
\[ \chi_{K(m+n)}(BG)= \sum_{[g_1,\dots,g_n] \in G \bs G_{n,p}} \chi_{K(m)} (BC\td{g_1, \dots, g_n})\]
for all $m,n\geq 1$.

Section \ref{sec: E theory of BG} generalizes the character theory of Hopkins--Kuhn--Ravenel.
In Construction \ref{con: HKR map},
we extend their character map from global quotients of finite group actions
to compact orbispaces, 
and we show in Theorem \ref{thm:generalize HKR} that it is an isomorphism. 
When applied to $B_{\gl}G$ for discrete groups with finite model for $\un{E}G$,
it yields our Theorem B, compare Corollary \ref{cor:Characters for BG}.

In Section \ref{sec: Morava orbi char} we relate the orbifold Euler characteristic 
to the Morava $K$-theoretic Euler characteristic.
The upshot is that $\chi_{\orb}(BG)$ displays features of a chromatic height $-1$ invariant,
see Remark \ref{rk:chromatic sequence}.
For this discussion we make use of the {\em formal loop space}
$\Lc X$ of an orbispace $X$, see Construction \ref{con:shift}.
We show in Theorem \ref{thm:Euler=Euler} that for a compact orbispace,
the formal loop orbispace is again compact,
and the orbispace Euler characteristic of $\Lc X$
equals the rational Euler characteristic of the underlying space of $X$.
We already gave an outline of Section \ref{sec:examples}, which discusses many examples.\smallskip

{\bf Related work.}
Many results in this paper are related to the formulas in \cite{Stap}.
Though the paper \cite{Stap} deals with finite groups, the idea of using the full cohomology of centralizers in this context is already present there. Similar ideas in the context of $\mathbb{F}_p$-cohomology are present in \cite{Lee}, \cite{Henn1}, \cite{Henn} and \cite{JacMc}. We expect that the results in this paper can be generalized to the transchromatic context and many results in \cite{Stap} have versions for infinite discrete groups. 
  
Lurie generalizes Hopkins--Kuhn--Ravenel character theory
to the case of orbi\-spaces in \cite{Elliptic},
see also an exposition in \cite{Raksit}.
The approach in \cite{Elliptic} is slightly different from ours since Lurie uses
global spaces defined on finite abelian groups (which he also calls `orbispaces').
This allows him to reproduce the results of \cite{HKR} and \cite{Stap}
and generalize them to the context of orbispaces.
We are interested in exploring the application of the character theory in the context of infinite groups and geometric group theory. For this purpose orbispaces seem to provide a very convenient language. This is why our orbispaces are directly defined on all finite groups as opposed to finite abelian groups as considered in  \cite{Elliptic}. \smallskip

{\bf Acknowledgments.}
We would like to thank Alejandro Adem, Dieter Degrijse, Fabian Hebestreit,
Alexander Kupers, Sil Linskens, Akhil Mathew, Lennart Meier,
Oscar Randal-Williams, Matthias Wendt and Mingjia Zhang for helpful conversations.
The second author was supported
by the EPSRC grant EP/X038424/1 ``Classifying spaces, proper actions and stable homotopy theory''.
The first and third authors are members of the Hausdorff Center for Mathematics
at the University of Bonn (DFG GZ 2047/1, project ID 390685813).
The second author also would like to thank the Hausdorff Center for Mathematics
for the hospitality during several visits to Bonn.

\section{Orbispaces} \label{sec: orbi section}
In this section we recall some basic facts about orbispaces in the sense of Gepner
and Henriques \cite{gepner-henriques}. 
Important examples of orbispaces are global classifying spaces of discrete groups,
and more generally global quotient spaces associated to $G$-spaces,
see Example \ref{example: global quotient}.
We recall the concept of {\em compact} orbispace and record
some finiteness results in Theorem \ref{thm:compact_finiteness}.
An easy but fundamental fact is that when $G$ admits a finite model for $\un{E}G$,
then its global classifying space $B_{\gl}G$ is a compact orbispace.

Then we introduce the {\em orbispace Burnside ring} $A_{\orb}$, the universal recipient
of an Euler characteristic for compact orbispaces. We also show that the underlying abelian
group of the orbispace Burnside ring is free, with a basis parameterized by isomorphism classes
of finite groups. Altogether, this means that in order to specify
an Euler characteristic with values in some abelian group, we can freely assign values
to the global classifying spaces of finite groups.
In Definition \ref{def:orb_chi} we define the orbispace Euler characteristic $\chi_{\orb}[X]$
for compact orbispaces.
This invariant generalizes the orbifold Euler characteristic of virtually torsion-free
groups with finite $\un{E}G$, by the relation $\chi_{\orb}(BG)=\chi_{\orb}[B_{\gl}G]$.
\medskip

\begin{construction}[The global indexing categories]\label{con:Orb}
  We recall from \cite{gepner-henriques} the topological category $\Glo$ and
  its non-full subcategory $\Orb$. To distinguish the presheaves of spaces on $\Glo$ and $\Orb$
  we adopt the terminology suggested by Rezk \cite{rezk1} and use the terms {\em global spaces}
  and {\em orbispaces}, respectively. The reader should beware, however,
  that global spaces are also called orbispaces by some authors, for example
  in \cite{gepner-henriques, Elliptic}.
  For this paper, orbispaces are the more important
  class, but global spaces show up in the discussion of the formal loop space in
  Section \ref{sec: Morava orbi char}.

  In short, the topological category $\Glo$ is the coherent nerve
  of the $(2,1)$-category of groups, group homomorphisms, and conjugations.
  And $\Orb$ is the coherent nerve of the $(2,1)$-category of groups,
  injective group homomorphisms, and conjugations.
  We expand this definition.
  The objects of $\Glo$ and $\Orb$ are all discrete groups. 
  If $G$ and $K$ are two groups, the morphism space $\Glo(K,G)$
  is the geometric realization of the  translation groupoid $\underline{\Hom}(K,G)$ 
  of the conjugation $G$-action on $\Hom(K,G)$.
  Unpacking this even further, the groupoid
  $\underline{\Hom}(K,G)$ has group homomorphisms $\alpha:K\to G$ as objects,
  and morphisms are pairs $(g,\alpha)$ in $G\times \Hom(K,G)$,
  whose source and target are $\alpha$ and $c_g\circ\alpha$, respectively,
  with $c_g(\gamma)=g\gamma g^{-1}$ the inner automorphism.
  We thus have a homeomorphism
  \[ \Glo(K,G) \cong  EG \times_G  \Hom(K,G),\]
  where $E G$ is the bar construction model for the universal free $G$-space,
  and the right hand side is the homotopy orbit space of the $G$-action, by conjugation,
  on the set of group homomorphisms.
  The space $\Glo(K,G)$ is thus a 1-type, and there is a homotopy equivalence
  \[   \Glo(K,G) \simeq  \coprod_{[\alpha:K\to G]}  B C(\alpha). \]
  Here the disjoint union is over conjugacy classes of homomorphisms from $K$ to $G$;
  and $C(\alpha)$ is the centralizer, in $G$, of the image of $\alpha:K\to G$.
  Composition in $\Glo$ arises as geometric realization of the composition functor
  \begin{equation}\label{eq:composition_Hom}
  \circ \ : \ \underline{\Hom}(K,G)\times\underline{\Hom}(L,K)\to\underline{\Hom}(L,G)  
  \end{equation}
  that is composition of group homomorphisms on objects, and 
  \[ (g,\alpha)\circ (k,\beta) = (g\cdot\alpha(k),\alpha\circ\beta) \]
  on morphisms.
  Then $\Orb$ is the non-full topological subcategory of $\Glo$ spanned by the monomorphisms.
  In other words, $\Orb(K,G)$ is the union of those path components of $\Glo(K,G)$
  that are indexed by injective group homomorphisms. Thus
 \begin{equation} \label{eq:fix_of_Orb}
 \Orb(K,G) \cong  EG \times_G  \Mono(K,G) \simeq  \coprod_{[\alpha:K\to G]}  B C(\alpha), \end{equation} 
  where this time, the disjoint union is over $G$-conjugacy classes of monomorphisms
  from $K$ to $G$.
\end{construction}

We write $\Orb_{\Fin}$ for the full topological subcategory
of $\Orb$ spanned by the finite groups.

\begin{defn} \label{def: orbispace}
  An {\em orbispace} is a contravariant continuous functor from $\Orb_{\Fin}$
  to the category $\bT$ of
  compactly generated spaces. The category of orbispaces is denoted by $orbspc$. 
  A morphism $f:X\to Y$ of orbispaces is an {\em equivalence} if for every finite group $K$,
  the map $f(K):X(K)\to Y(K)$ is a weak homotopy equivalence.
\end{defn}

For this paper, the most important examples of orbispaces
are global quotients of actions of discrete groups.

\begin{eg}[Global quotient orbispaces] \label{example: global quotient}
  We let $G$ be a discrete group, and we let $A$ be a $G$-space.
  The {\em global quotient orbispace} $G\dbs A$  is defined as follows.
  For a finite group $K$, we let $G$ act on the space
  \[  \coprod_{\alpha\in\Mono(K,G)} A^{\Img(\alpha)} \]
  by $g\cdot (a,\alpha)=(g a,c_g\circ\alpha)$.
  Then we define $(G\dbs A)(K)$ as the homotopy orbit space
  \[ (G\dbs A)(K)\ = \ E G \times_G \left( \coprod_{\alpha\in\Mono(K,G)} A^{\Img(\alpha)}\right). \]
  In particular, the underlying space $(G\dbs A)(1)$ is the homotopy orbit space $EG \times_G A$.
  Choosing representatives of the conjugacy classes of monomorphisms $K\to G$
  exhibits the homotopy type as
  \[ (G\dbs A)(K)\ \simeq \
    \coprod_{[\alpha]\in G\bs \Mono(K,G)} E C(\alpha) \times_{C(\alpha)}  A^{\Img(\alpha)}, \]
  where the square brackets indicate conjugacy classes.

  To define the functoriality of $G\dbs A$ we let $L$ be another finite group.
  We reinterpret $(G\dbs A)(K)$ a as homotopy colimit, and derive the $\Orb$-functoriality
  from the functoriality of homotopy colimits.
  We write $\underline{\Mono}(K,G)$ for the translation category
  of the conjugation $G$-action on $\Mono(K,G)$.
  In other words, $\underline{\Mono}(K,G)$ is the full subcategory
  of the translation category $\underline{\Hom}(K,G)$
  discussed in Construction \ref{con:Orb}, with objects the injective homomorphisms.
  The space $\Orb(K,G)$ is then the nerve of the category $\underline{\Mono}(K,G)$.
  The composition functors \eqref{eq:composition_Hom} restrict to monomorphisms,
  and make $\underline{\Mono}$ a sub-2-category of the
  $(2,1)$-category $\underline{\Hom}$.

  The $G$-space $A$ gives rise to a functor
  \[ A\sharp K \ : \ \underline{\Mono}(K,G) \ \to \ \bT\]
  to the category of spaces that sends a monomorphism $\alpha:K\to G$
  to $A^{\Img(\alpha)}$, and a morphism $(g,\alpha)$ of $\underline{\Mono}(K,G)$
  to the translation map $g\cdot:A^{\Img(\alpha)}\to A^{\Img(c_g\circ\alpha)}$.
  The space $(G\dbs A)(K)$ is then precisely the homotopy colimit, in the sense of
  Bousfield--Kan \cite[Chapter XII]{bousfield-kan}, of the functor $A\sharp K$.
  For every object $(\alpha,\beta)$ of
  $\underline{\Mono}(K,G)\times \underline{\Mono}(L,K)$,
  the group $\Img(\alpha\beta)$ is a subgroup of $\Img(\alpha)$,
  so the inclusion is a continuous map
  \[ (A\sharp K)(\alpha)\ = \  A^{\Img(\alpha)}\ \to \
    A^{\Img(\alpha\beta)}\ = \ (A\sharp L)(\alpha\circ\beta)\ .  \]
  For all morphisms $((g,\alpha),(k,\beta))$ of
  $\underline{\Mono}(K,G)\times \underline{\Mono}(L,K)$, the following square commutes:
  \[ \xymatrix@C=15mm{
      A^{\Img(\alpha)}\ar[r]^-{\text{inclusion}} \ar[d]_{g\cdot} &
      A^{\Img(\alpha\beta)}\ar[d]^{g\alpha(k)\cdot} \\
      A^{\Img(c_g \alpha)}\ar[r]_-{\text{inclusion}} &
      A^{\Img(c_g \alpha c_k \beta)}
    } \]
  This means that the inclusions define a natural transformation
  of functors 
  from the left-lower composite to the  upper-right composite
  in the following diagram of functors:
  \[  \xymatrix@C=18mm{\underline{\Mono}(K,G)\times \underline{\Mono}(L,K)
      \ar[d]_{\text{projection}}\ar[r]^-{\text{composition}} &
      \underline{\Mono}(L,G)\ar[d]^{A\sharp L}\\
      \underline{\Mono}(K,G)\ar[r]_-{A\sharp K}
      \ar@{}[ur]^(.3){}="a"^(.7){}="b" \ar@{=>}^{\text{inclusion\qquad }} "a";"b" 
& \bT }\]
  This natural transformation then induces a continuous map of Bousfield--Kan homotopy colimits,
  and thus a continuous map
  \begin{align*}
    (G\dbs A)(K)&\times \Orb(L,K) =\\
                 &\hocolim_{\underline{\Mono}(K,G)\times \underline{\Mono}(L,K) }
                   (A\sharp K)\circ\text{projection}  \\
    \xra{\text{inclusion}_*} &\hocolim_{\underline{\Mono}(K,G)\times \underline{\Mono}(L,K) }
      (A\sharp L)\circ \text{composition} \\
                \xra{\phantom{\text{inclusion}_*}} &\hocolim_{\underline{\Mono}(L,G) }(A\sharp L) = (G\dbs A)(L)
  \end{align*}
%
%
%  
%  To define the functoriality of $G\dbs A$ we let $L$ be another finite group.
%  We need two continuous maps: The composition map
%  \begin{align*}
%   \circ: \coprod_{\alpha\in\Mono(K,G)} A^{\Img(\alpha)}\times  \Mono(L,K) &\to
%    \coprod_{\gamma\in\Mono(L,G)} A^{\Img(\gamma)} \\
%    (a,\alpha,\beta)&\longmapsto (a,\alpha\beta),
%  \end{align*}
%  and the map
%\[EG \times \Mono(K,G) \times EK \to EG\]
%defined by $(g,\alpha,k) \mapsto g\cdot \alpha(k)$ on the bar construction models. These two maps induce a well-defined continuous map
%  \begin{align*}
%    (G\dbs A)(K)&\times \Orb(L,K) =  \\
%     &   E G\times_G
%    ( \coprod_{\alpha\in\Mono(K,G)} A^{\Img(\alpha)})\times (E K\times_K \Mono(L,K))\\
% 
%  \end{align*}
This defines the $\Orb$-functoriality of $G\dbs A$.
\end{eg}  

As the following proposition shows,
the orbispace $G\dbs A$ only sees the `proper' $G$-homotopy type of $A$,
and it is agnostic about the fixed point spaces for infinite subgroups of $G$.
Said differently, the $G\dbs -$ is fully homotopical
for `$\Fin$-weak equivalences', i.e., weak equivalences on fixed points
of all finite subgroups.

\begin{prop}\label{prop:Fin-invariance}
  Let $G$ be a discrete group.
  Let $f:A\to B$ be a continuous $G$-map between $G$-spaces such that
  for every finite subgroup $H$ of $G$, the map of fixed points $f^H:A^H\to B^H$
  is a weak equivalence. Then the morphism $G\dbs f:G\dbs A\to G\dbs B$ is
  an equivalence of orbispaces.
\end{prop}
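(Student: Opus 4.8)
The plan is to verify the defining condition for an equivalence of orbispaces directly, namely that the map $(G\dbs f)(K)\colon (G\dbs A)(K)\to (G\dbs B)(K)$ is a weak homotopy equivalence for every finite group $K$. The key observation is that $(G\dbs A)(K)$ is assembled entirely out of the fixed-point spaces $A^{\Img(\alpha)}$ with $\alpha\in\Mono(K,G)$, and since $K$ is finite, each image $\Img(\alpha)$ is a \emph{finite} subgroup of $G$; so the hypothesis on $f$ applies to all of these fixed-point spaces.

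Concretely, I would work with the description of $(G\dbs A)(K)$ from Example~\ref{example: global quotient} as the Bousfield--Kan homotopy colimit of the functor $A\sharp K\colon\underline{\Mono}(K,G)\to\bT$. The $G$-equivariant map $f$ induces a natural transformation $f\sharp K\colon A\sharp K\to B\sharp K$, given on an object $\alpha$ by the restriction $f^{\Img(\alpha)}\colon A^{\Img(\alpha)}\to B^{\Img(\alpha)}$; the naturality squares involving the translation maps $g\cdot$ commute precisely because $f$ is $G$-equivariant. As $\Img(\alpha)$ is finite for every object $\alpha$ of $\underline{\Mono}(K,G)$, the hypothesis guarantees that each $f^{\Img(\alpha)}$ is a weak equivalence, so $f\sharp K$ is an objectwise weak equivalence of $\underline{\Mono}(K,G)$-diagrams. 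Since Bousfield--Kan homotopy colimits preserve objectwise weak equivalences, the induced map $(G\dbs f)(K)=\hocolim_{\underline{\Mono}(K,G)}(f\sharp K)$ on homotopy colimits is a weak equivalence, as required. (Alternatively one can argue on the explicit model $EG\times_G\bigl(\coprod_\alpha A^{\Img(\alpha)}\bigr)$: the underlying $G$-map $\coprod_\alpha A^{\Img(\alpha)}\to\coprod_\alpha B^{\Img(\alpha)}$ is a coproduct of weak equivalences, hence a weak equivalence, and the homotopy orbit construction $EG\times_G(-)$ preserves underlying weak equivalences.)

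I do not expect a genuine obstacle here. The two points that need care are: first, recording that the objects of $\Orb_{\Fin}$ are finite groups, so that images of homomorphisms out of $K$ really are finite subgroups of $G$ — this is exactly what licenses the application of the hypothesis; and second, invoking the homotopy invariance of Bousfield--Kan homotopy colimits (equivalently, of the Borel construction), which is unproblematic since we work throughout in compactly generated spaces. Finally, it is not necessary to revisit the $\Orb$-functoriality of $G\dbs A$ in the variable $K$, because $G\dbs f$ is already a morphism of orbispaces by construction, and being an equivalence of orbispaces is a purely objectwise condition.
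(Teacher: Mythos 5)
Your proposal is correct and follows essentially the same route as the paper's proof: reduce to the finite group $K$, observe that $\Img(\alpha)$ is finite for every monomorphism $\alpha\colon K\to G$ so the hypothesis applies, and use that the Borel construction $EG\times_G(-)$ (equivalently, the Bousfield--Kan homotopy colimit) preserves underlying weak equivalences.
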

\begin{proof}
  We let $K$ be any finite group. Then for every monomorphism
  $\alpha:K\to G$, the map $f^{\Img \alpha}:A^{\Img \alpha}\to B^{\Img\alpha}$
  is a weak equivalence by hypothesis.
  So the $G$-equivariant map
  \[   \coprod_{\alpha\in\Mono(K,G)} f^{\Img(\alpha)} :
    \coprod_{\alpha\in\Mono(K,G)} A^{\Img(\alpha)}\to
    \coprod_{\alpha\in\Mono(K,G)} B^{\Img(\alpha)}\]
  is a weak equivalence of underlying spaces. The homotopy orbit construction
  then takes it to a weak equivalence of spaces.
  So the morphism $G\dbs f$ is a weak equivalence at $K$.
\end{proof}

\begin{eg}[Global classifying spaces]
  The {\em global classifying space} of a discrete group $G$
  is the represented orbispace $B_{\gl}G=\Orb(-,G)$,
  or rather its restriction to the subcategory $\Orb_{\Fin}$.
  By inspection of definitions, this is isomorphic to the global quotient orbispace
  of $G$ acting on a one-point space, i.e.,
  \[ B_{\gl}G \ = \ G\dbs \ast . \]
  By Proposition \ref{prop:Fin-invariance},
  the unique map $\un{E}G\to *$ induces an equivalence of orbispaces
  \[ G\dbs\un{E}G \xra{\simeq} G\dbs \ast = B_{\gl}G.\]
\end{eg}

\begin{eg}
  If $G$ is a torsion-free group, then a monomorphism from a finite group to $G$
  must necessarily have a trivial source.
  So for every $G$-space $A$, the orbispace $G\dbs A$ has the homotopy orbit space $E G\times_G A$
  as its value at the trivial group, and $G\dbs A$ is empty at all non-trivial finite groups.
\end{eg}

\begin{eg}\label{eg:induction_formula}
  We let $H$ be a subgroup of a discrete group $G$, and we let $A$ be an $H$-space.
  We shall now specify an equivalence of orbispaces 
  \[ H\dbs A\ \xra{\simeq}\ G\dbs (G\times_H A). \]
  When $A$ consists of a single point, this specializes to an equivalence from
  $B_{\gl}H$ to $G\dbs (G/H)$.
  For a finite group $K$, the inclusion $\iota:H\to G$ induces a homotopy equivalence
  \[ (H\dbs A)(K)= E H\times_H( \coprod_{\beta\in\Mono(K,H)} A^{\Img(\beta)}) \xra{\simeq}
    E G\times_H( \coprod_{\beta\in\Mono(K,H)} A^{\Img(\beta)}). \]
  Moreover, the $G$-equivariant homeomorphism
  \begin{align*}
    G\times_H( \coprod_{\beta\in\Mono(K,H)} A^{\Img(\beta)})
    &\to \ \coprod_{\alpha\in\Mono(K,G)} (G\times_H A)^{\Img(\alpha)} \\
  [g,(a,\beta)]\qquad  &\longmapsto \qquad ([g,a],c_g\circ\iota\circ\beta)\ ,
  \end{align*}
  induces a homeomorphism of homotopy orbits
  \begin{align*}
    E G\times_H&( \coprod_{\beta\in\Mono(K,H)} A^{\Img(\beta)}) =
       E G\times_G(G\times_H( \coprod_{\beta\in\Mono(K,H)} A^{\Img(\beta)}))\\
    &\iso
    E G\times_G(\coprod_{\alpha\in\Mono(K,G)} (G\times_H A)^{\Img(\alpha)})=
    (G\dbs (G\times_H A))(K).
  \end{align*}
  We omit the verification that for varying $K$, the composite maps
  $(H\dbs A)(K)\to (G\dbs (G\times_H A))(K)$ form a morphism of orbispaces,
  and are thus an equivalence of orbispaces.
\end{eg}

\begin{defn}
  A orbispace is {\em compact} if it belongs to the smallest subcategory
  of the homotopy category of orbispaces that contains the empty orbispace,
  the orbispace $B_{\gl}G$ for all finite groups $G$,
  and that is closed under homotopy pushouts.
\end{defn}

Depending on the reader's background and preferences, the term `homotopy pushout'
can either be interpreted in concrete terms via a double mapping cylinder,
or in abstract terms as a pushout in the $\infty$-category of orbispaces.

\begin{eg}
  Let $G$ be a discrete group and let us denote by $G\bT$ the category of $G$-spaces. The functor
  \[ G\dbs -\ : \ G\bT \ \to \ orbspc \]
  preserves homotopy pushouts, and the orbispace $G\dbs(G/H)$ is equivalent
  to $B_{\gl}H$. 
  So if $A$ is a finite proper $G$-CW-complex, then the global quotient $G\dbs A$
  is a compact orbispace.
  In particular, if $G$ admits a finite $G$-CW-model for $\underline{E}G$,
  then the global classifying space $B_{\gl}G$ is a compact orbispace.
\end{eg}

In part (iii) of the following theorem and in the rest of the paper,
we let $K(n)$ denote the $n$-th Morava $K$-theory spectrum at the prime $p$,
with the coefficient ring $K(n)^*=\mF_p[v_n^{\pm 1}]$, where $v_n$ is of degree $-2(p^n-1)$.
And in part (iv) and below, we let $E$ denote the Morava $E$-theory spectrum, for
an implicit height $n$ and prime $p$, see \cite[Section 7]{goerss-hopkins:moduli}. This is
a Landweber exact theory with the
coefficient ring $E^*= W(\mF_{p^n})\llbracket u_1, \dots, u_{n-1}\rrbracket[u^{\pm 1}]$,
where $u$ is of degree~$-2$.

\begin{theorem}\label{thm:compact_finiteness}
  Let $X$ be a compact orbispace.
  \begin{enumerate}[\em (i)]
  \item  There is a number $k\geq 1$ such that for all finite groups $H$
    of order larger than $k$, the space  $X(H)$ is empty.
  \item For every finite group $K$, the total rational homology of the space $X(H)$
    is finite-dimensional.
  \item For every prime $p$ and $n\geq 1$, the Morava $K$-theory $K(n)^*(X(1))$
    of the underlying space $X(1)$ is finite-dimensional over $K(n)^*$.
  \item For every prime $p$ and $n\geq 1$,
    the Morava $E$-cohomology $E^*(X(1))$ is finitely generated as a graded $E^*$-module.
  \end{enumerate}
\end{theorem}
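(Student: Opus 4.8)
The plan is to prove all four statements simultaneously by induction over the construction of compact orbispaces. Since the class of compact orbispaces is the smallest subcategory of the homotopy category of orbispaces containing the empty orbispace and the $B_{\gl}G$ for finite groups $G$, and closed under homotopy pushouts, it suffices to check that (i)--(iv) hold for the base cases and are preserved under homotopy pushouts. For the base cases, the empty orbispace is trivial. For $X = B_{\gl}G$ with $G$ finite, the underlying space is $X(1) = \Orb(1,G) \simeq BG$, and more generally $X(H) = \Orb(H,G) \simeq \coprod_{[\alpha\colon H\to G]} BC(\alpha)$ by \eqref{eq:fix_of_Orb}, which is empty as soon as $|H| > |G|$; this gives (i) with $k = |G|$. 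Statement (ii) for $B_{\gl}G$ follows because each $BC(\alpha)$ with $C(\alpha)$ finite is rationally acyclic, and the coproduct is finite. Statement (iii) for $B_{\gl}G$ is precisely Ravenel's theorem \cite{RavBG} that $K(n)^*(BG)$ is finite-dimensional over $K(n)^*$ for finite $G$. Statement (iv) for $B_{\gl}G$ is the result of Greenlees--Sadofsky \cite[Corollary 4.4]{GS} that $E^*(BG)$ is finitely generated over $E^*$ for finite $G$.

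Next I would check that each property is closed under homotopy pushouts. Suppose $X \leftarrow Z \to Y$ is a diagram of compact orbispaces for which (i)--(iv) all hold, and let $P$ be the homotopy pushout. For (i): at each finite group $H$, evaluation commutes with homotopy pushouts of orbispaces, so $P(H)$ is the homotopy pushout of $X(H) \leftarrow Z(H) \to Y(H)$; if all three are empty then so is $P(H)$, so we may take $k_P = \max(k_X, k_Y, k_Z)$. For (ii): apply the Mayer--Vietoris sequence in rational homology to the homotopy pushout square $Z(H) \to X(H) \sqcup Y(H) \to P(H)$; since the total rational homology of $X(H)$, $Y(H)$, $Z(H)$ is finite-dimensional by hypothesis, finite-dimensionality passes to $P(H)$. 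For (iii): apply the Mayer--Vietoris long exact sequence in $K(n)$-cohomology to the underlying spaces $Z(1) \to X(1) \sqcup Y(1) \to P(1)$; each $K(n)^j(P(1))$ sits in a short exact sequence sandwiched between subquotients of $K(n)^*(X(1)) \oplus K(n)^*(Y(1))$ and $K(n)^*(Z(1))$, which are finite-dimensional over the graded field $K(n)^*$, hence $K(n)^*(P(1))$ is finite-dimensional. For (iv): the same Mayer--Vietoris sequence in $E$-cohomology, together with the fact that $E^*$ is Noetherian (it is a power series ring over the Witt vectors $W(\mF_{p^n})$, which is a complete discrete valuation ring, adjoined finitely many indeterminates, inverting the degree-$(-2)$ unit $u$), shows that finitely generated graded $E^*$-modules form an abelian category closed under extensions and subquotients; since $E^*(X(1))$, $E^*(Y(1))$, $E^*(Z(1))$ are finitely generated, so is $E^*(P(1))$.

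The one point requiring a little care is that the class of compact orbispaces is defined as the smallest subcategory of the \emph{homotopy} category closed under homotopy pushouts, so I should phrase the inductive statement as: the full subcategory of orbispaces satisfying (i)--(iv) contains the generators and is closed under homotopy pushouts, hence contains all compact orbispaces; this is formally valid because a property that holds for one object of an equivalence class holds for all, and the four properties are manifestly invariant under equivalence of orbispaces. The main obstacle, such as it is, is bookkeeping: one must be sure that evaluation $X \mapsto X(H)$ takes homotopy pushouts of orbispaces to homotopy pushouts of spaces (true since equivalences of orbispaces are detected objectwise and homotopy colimits in a functor category are computed objectwise), and that Mayer--Vietoris genuinely applies to a homotopy pushout square of spaces — this is standard once one replaces the double mapping cylinder by an honest pushout along a cofibration. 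No deeper input is needed beyond the cited finiteness theorems of Ravenel and Greenlees--Sadofsky for the finite-group base case.
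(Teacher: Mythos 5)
Your proof is correct and follows essentially the same route as the paper: verify the four properties for the empty orbispace and for $B_{\gl}G$ with $G$ finite (using \eqref{eq:fix_of_Orb}, Ravenel's theorem, and \cite[Corollary 4.4]{GS}), then close up under homotopy pushouts via objectwise evaluation and Mayer--Vietoris, using that $E^*$ is Noetherian for part (iv). One small slip: the reference \cite{GS} is Greenlees--Strickland, not Greenlees--Sadofsky.
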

\begin{proof}
  The overall argument is the same for all four parts: we show that the
  class of compact orbispaces that has the respective property contains
  the empty orbispace, it contains  the orbispace $B_{\gl}G$ for all finite groups $G$,
  and it is closed under homotopy pushouts.
  The pushout property for (i) follows from the fact that colimits in orbispaces
  are computed objectwise. For (ii), (iii) and (iv), it follows
  from the Mayer--Vietoris sequences;
  in the case of (iv) this exploits that the coefficient ring $E^*$ is Noetherian. 
  And the empty orbispace clearly has all four properties.

  It remains to show that all these classes contain $B_{\gl}G$ for any finite $G$.
  If $G$ is a finite group of order $d$, then there are no monomorphisms
  from groups $H$ to $G$ if the order of $H$ is larger than $d$.
  So for such groups $H$, the space $(B_{\gl}G)(H)$ is empty by \eqref{eq:fix_of_Orb}.
  Independent of the order of $H$, the space
  $(B_{\gl}G)(H)$ is always a disjoint union of classifying spaces of finite groups,
  also by \eqref{eq:fix_of_Orb}. So its rational homology is concentrated in dimension 0,
  where it is finite dimensional. Finally, the underlying space of
  $B_{\gl}G$ is a classifying space for the finite group $G$.
  So its Morava $K$-theory is finite dimensional over $K(n)^*$ by Ravenel's theorem
  \cite{RavBG}, and its Morava $E$-cohomology is finitely generated over $E^*$ by
  \cite[Corollary 4.4]{GS}.
\end{proof}

In the following construction we exploit that
the equivalence classes of compact orbispaces form a set.

\begin{defn}
  The {\em orbispace Burnside ring} $A_{\orb}$ is the quotient of the free abelian group
  on the equivalence classes of compact orbispaces by the subgroup generated by the relation
  $[\emptyset]=0$, and the relations
  \begin{equation}\label{eq:pushout_relation}
    [W] \ = \ [X] \ + \ [Y]\ - \ [Z]     
  \end{equation}
  for all compact orbispaces $X$, $Y$, $Z$ and $W$ such that
  $W$ can be written as a homotopy pushout of $X$ and $Y$ along $Z$.
\end{defn}

A function $f$ that assigns to every compact
orbispace $X$ an element $f(X)$ of an abelian group $B$ is an {\em additive invariant}
if it satisfies the following properties:
\begin{enumerate}
\item[(a)] If $X$ is equivalent to $Y$, then $f(X)=f(Y)$.
\item[(b)] The empty orbispace is assigned the zero element of $B$.
\item[(c)] If $W$ is the homotopy pushout of two compact orbispaces $X$ and $Y$
  along a compact orbispace $Z$,  then $f(W)=f(X)+f(Y)-f(Z)$.
\end{enumerate}
By design, for every such additive invariant,
there is a unique group homomorphism $\phi:A_{\orb}\to B$ such that
$f(X)=\phi[X]$ for every compact orbispace $X$.
In this sense, the orbispace Burnside ring is the universal recipient for
additive invariants of compact orbispaces.

\begin{remark}
  For the special case  $Z=\emptyset$ the pushout relation \eqref{eq:pushout_relation}
  becomes the relation $[X]+[Y]=[X\amalg Y]$, i.e., the coproduct of compact orbispaces
  represents the sum in the group structure of $A_{\orb}$. 
  We write $X^\diamond$ for the unreduced suspension of an orbispace $X$,
  i.e., the (homotopy) pushout of the diagram:
  \[       \{0,1\} \ \xla{\text{proj}} \ X\times \{0,1\} \xra{\text{incl}} X\times [0,1] \]
  Because $X\times[0,1]$ is equivalent to $X$,
  relation \eqref{eq:pushout_relation} yields $[X^\diamond]=2{\cdot}[\ast]-[X]$.
  So the inverse of $[X]$ in $A_{\orb}$ is represented
  by $(X\amalg\{0,1\})^\diamond$, the unreduced suspension of $X$ with two disjoint points added.
  Because $A_{\orb}$ is generated by the classes of compact orbispaces,
  and because sums and inverses in $A_{\orb}$ can be represented by geometric constructions
  in the realm of compact orbispaces, we conclude that every element of $A_{\orb}$
  is of the form $[X]$ for some compact orbispace $X$.  
%  Theorem \ref{thm:basis} provides a criterion for deciding when two compact
%  orbispaces represent the same class in $A_{\orb}$.
\end{remark}

The following theorem determines the structure of the orbispace
Burnside ring, and it gives a `pointwise' criterion for when two compact
orbispaces represent the same class in $A_{\orb}$.
It is straightforward from the definition that $A_{\orb}$ is generated
by the classes of the global classifying spaces of all finite groups.
The less obvious fact is that the pushout relation \eqref{eq:pushout_relation}
does not introduce any hidden relations between these generators.

Part (ii) of the next theorem exploits that
for a compact orbispace $X$ and a finite group $K$, the rational homology 
of the space $X(K)$ has finite total dimension, compare Theorem \ref{thm:compact_finiteness} (ii);
hence the rational Euler characteristic of $X(K)$ is well-defined.

\begin{theorem}\label{thm:basis}\ 
  \begin{enumerate}[\em (i)]
  \item  The abelian group $A_{\orb}$ is free, and a basis is given by the classes
    $[B_{\gl}G]$ as $G$ runs over a set of representatives
    of the isomorphism classes of finite groups.
  \item If $X$ and $Y$ are two compact orbispaces, then $[X]=[Y]$ in $A_{\orb}$
    if and only for every finite group $K$, the spaces $X(K)$ and $Y(K)$ have the same
    rational Euler characteristic.
  \end{enumerate}
\end{theorem}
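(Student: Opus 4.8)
The plan is to play off two maps between $A_{\orb}$ and the group of $\mZ$-valued functions on isomorphism classes of finite groups. On the one hand, assigning to a compact orbispace $X$ the tuple of rational Euler characteristics $\bigl(\chi_\mQ(X(K))\bigr)_{[K]}$ is an additive invariant: it is invariant under equivalence, vanishes on $\emptyset$, and satisfies the pushout relation because colimits of orbispaces are computed objectwise and rational homology turns a homotopy pushout into a Mayer--Vietoris sequence; the relevant finiteness comes from Theorem \ref{thm:compact_finiteness}(ii). By the universal property of $A_{\orb}$, this gives a group homomorphism $E:A_{\orb}\to \prod_{[K]}\mZ$, $E[X]=\bigl(\chi_\mQ(X(K))\bigr)_{[K]}$. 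On the other hand, by the remark preceding the theorem, $A_{\orb}$ is generated as a group by the classes $[B_{\gl}G]$; so there is a surjection $\bigoplus_{[G]}\mZ \twoheadrightarrow A_{\orb}$, $e_G\mapsto [B_{\gl}G]$. The theorem will follow once I show that the composite
\[
  \bigoplus_{[G]}\mZ \ \twoheadrightarrow\ A_{\orb}\ \xra{E}\ \prod_{[K]}\mZ
\]
is injective, since then the first map is injective as well, hence an isomorphism (proving (i)), and $[X]=[Y]$ in $A_{\orb}$ is detected by $E[X]=E[Y]$ (proving (ii)).

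The composite above is described by the matrix with entries $a_{K,G}=\chi_\mQ\bigl((B_{\gl}G)(K)\bigr)$. By \eqref{eq:fix_of_Orb}, $(B_{\gl}G)(K)$ is a disjoint union, over $G$-conjugacy classes of monomorphisms $\alpha:K\to G$, of classifying spaces $BC(\alpha)$ of \emph{finite} groups (finite because they are subgroups of $G$), each contributing Euler characteristic $1$; hence
\[
  a_{K,G}\ =\ \#\bigl(G\bs\Mono(K,G)\bigr)\ =\ \#\{\text{subgroups of }G\text{ isomorphic to }K\}\big/{\sim_G}\cdot(\text{something}),
\]
more precisely $a_{K,G}$ is the number of $G$-conjugacy classes of subgroups of $G$ that are isomorphic to $K$, counted with the appropriate multiplicity $|\Mono(K,G)|/|\mathrm{subgroup\ orbit}|$ — but the only feature I actually need is: $a_{K,G}=0$ unless $K$ embeds into $G$, i.e. unless $|K|\le|G|$, with equality of orders forcing $K\cong G$; and $a_{G,G}$ is a positive integer (the trivial embedding/automorphism contributes, giving $a_{G,G}\ge 1$). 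Thus, ordering the isomorphism classes of finite groups by non-decreasing order (refining arbitrarily within a fixed order), the matrix $(a_{K,G})$ is block lower-triangular with nonzero — indeed positive integer — diagonal entries $a_{G,G}$. Such a matrix induces an injective map $\bigoplus_{[G]}\mZ\to\prod_{[K]}\mZ$: given a finitely supported nonzero vector, look at a minimal-order $G$ in its support; the $K=G$ coordinate of the image is $a_{G,G}$ times the (nonzero) $G$-entry, since all strictly smaller-order groups contribute $0$ to that coordinate and larger-order groups $G'$ have $a_{G,G'}=0$.

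That triangularity argument is the whole point, so the main obstacle is purely bookkeeping: one must make sure the indexing set (isomorphism classes of finite groups) can be well-ordered so that order is non-decreasing along it, which is immediate since for each $d$ there are only finitely many isomorphism classes of groups of order $d$; and one must double-check the vanishing claim $a_{K,G}=0$ for $|K|>|G|$ and the identification $a_{G,G}\in\mZ_{>0}$ via \eqref{eq:fix_of_Orb}. A minor subtlety is that $\prod_{[K]}\mZ$ is a product rather than a sum, but this causes no trouble: injectivity only requires evaluating the image on finitely many coordinates at a time. Finally, for part (ii), the "only if" direction is the additivity of $E$, and the "if" direction is exactly the injectivity of $E$ restricted to the image of $\bigoplus_{[G]}\mZ$, which is all of $A_{\orb}$; so $E$ itself is injective and $[X]=[Y]\iff E[X]=E[Y]$, which is the asserted pointwise criterion.
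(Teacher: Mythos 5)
Your proposal follows the paper's own proof almost verbatim: the invariants $X\mapsto\chi_\mQ(X(K))$ are additive, hence factor through $A_{\orb}$, the classes $[B_{\gl}G]$ generate, and a triangularity argument for the matrix $a_{K,G}=\chi_\mQ\bigl((B_{\gl}G)(K)\bigr)$ gives linear independence and injectivity of the evaluation map. However, your final injectivity step is stated incorrectly: you take a \emph{minimal}-order group $G$ in the support of a nonzero vector and claim that larger-order groups $G'$ satisfy $a_{G,G'}=0$. That is false in general --- $a_{G,G'}$ is nonzero whenever $G$ embeds into $G'$, e.g.\ $a_{C_2,C_4}\neq 0$ --- so the $K=G$ coordinate would receive contributions from all larger groups in the support containing a copy of $G$. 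The vanishing you actually have (and correctly isolate earlier) is $a_{K,G}=0$ unless $K$ embeds into $G$, so you must instead evaluate at a group $K$ of \emph{maximal} order in the support: then $a_{K,G}=0$ for every other $G$ in the support (smaller order, or equal order but non-isomorphic), and the $K$-coordinate equals $a_{K,K}$ times the $K$-entry with $a_{K,K}=|\Out(K)|\geq 1$. With that one-word repair the argument is complete and coincides with the proof in the paper.
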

\begin{proof}
  We let $F$ denote the subgroup of $A_{\orb}$ generated by the classes $[B_{\gl}G]$
  for all finite groups $G$.
  We consider the class of all orbispaces $X$ such that $[X]\in F$.
  This class contains all global classifying spaces of finite groups by design,
  and it is closed under pushout by the pushout relation \eqref{eq:pushout_relation}.
  So $[X]\in F$ for all compact orbispaces $X$. This shows that $A_{\orb}$
  is generated by the classes of a global classifying spaces of finite groups,
  which establishes half of (i).

  We prove the other half of (i) together with (ii).
  For a finite group $K$, we consider the function that assigns to a compact orbispace $X$
  the rational Euler characteristic $\chi_\mQ(X(K))$ of the space $X(K)$.
  Then $\chi_\mQ(\emptyset)=0$; and if $W$ is a homotopy pushout of $X$ and $Y$ along $Z$,
  then $W(K)$ is a homotopy pushout of $X(K)$ and $Y(K)$ along $Z(K)$,
  so $\chi_\mQ(W(K))=\chi_\mQ(X(K))+\chi_\mQ(Y(K))-\chi_\mQ(Z(K))$.
  So sending $X$ to $\chi_\mQ(X(K))$ is an additive invariant.
  Hence there is a unique group homomorphism
  $\phi_K:A_{\orb}\to \mZ$ such that $\phi_K[X]=\chi_\mQ(X(K))$ for every compact orbispace $X$.
  In particular, if $[X]=[Y]$ in $A_{\orb}$, then
  $\chi_\mQ(X(K))=\chi_\mQ(Y(K))$ for every finite group $K$.

  The number of isomorphism classes of finite groups
  is countable. We choose representatives $\{G_n\}_{n\geq 0}$ in some order
  of non-decreasing cardinality, i.e., $|G_n|\leq |G_{n+1}|$ for all $n\geq 0$.
  The homomorphisms $\phi_{G_n}$ are the components of a homomorphism
  \[ \phi =(\phi_{G_n})_{n\geq 0}\ : \ A_{\orb}\ \to \ {\prod}_\mN\,\mZ \]
  to a countable product of copies of $\mZ$.
  For $m<n$ we have  $|G_m|\leq |G_n|$, and $G_m$ and $G_n$ are not isomorphic.
  So there is no monomorphism from $G_n$ to $G_m$.
  Hence the space $(B_{\gl}G_m)(G_n)$ is empty by \eqref{eq:fix_of_Orb},
  and thus $\phi_{G_n}[B_{\gl}G_m]=\chi_\mQ((B_{\gl}G_m)(G_n))=0$ for $m<n$.
  Also by \eqref{eq:fix_of_Orb}, the space $(B_{\gl}G_n)(G_n)$
  is a disjoint union, indexed by $\Out(G_n)$,
  of classifying spaces of the center of $G_n$. So the rational homology of
  $(B_{\gl}G_n)(G_n)$ is concentrated in dimension 0,
  where its dimension is the order of $\Out(G_n)$.
  Thus $\phi_{G_n}[B_{\gl}G_n]=\chi_\mQ((B_{\gl}G_n)(G_n))=|\Out(G_n)|$.
  We conclude that the $\mN\times\mN$ integer matrix with coefficients
  $\phi_{G_n}[B_{\gl}G_m]$ is upper triangular with nonzero entries
  on the diagonal. Hence the classes $[B_{\gl}G_n]$ for $n\in\mN$
  are linearly independent. Since these classes also generate $A_{\orb}$,
  they form a $\mZ$-basis, and the homomorphism $\phi:A_{\orb}\to\prod_\mN\mZ$
  is injective. This concludes the proof of (i) and (ii).
\end{proof}

\begin{eg}
    Let $G$ be a discrete group and $A$ a finite proper $G$-CW-complex.
  Then the global quotient orbispace $G\dbs A$ is compact,
  and its class in the orbispace Burnside ring satisfies
  \begin{equation} \label{eq:GmodA_in_Aorb}
  [G\dbs A] \ = \ 
    \sum_{n\geq 0} (-1)^n\cdot \sum_{G \sigma} [B_{\gl} (\stab(\sigma)) ] \ \in\
    A_{\orb}.   
  \end{equation}
  Here the inner sum runs over all $G$-orbits of $n$-cells of $A$,
  and $\stab(\sigma)$ is the stabilizer group of the cell $\sigma$.
  Indeed, both sides are additive for disjoint unions and pushouts in $A$,
  so it suffices to check the claim for $A=G/H$ for all finite subgroups $H$ of $G$.
  In this special case, the claim holds because $G\dbs (G/H)$ is equivalent
  to $B_{\gl}H$ as an orbispace, see Example \ref{eg:induction_formula},
\end{eg}

\begin{defn}[Orbispace Euler characteristic]\label{def:orb_chi}
  By Theorem \ref{thm:basis} there is a unique group homomorphism
  \[ \chi_{\orb}\ : \ A_{\orb}\ \to \ \mQ, \]
  such that $\chi_{\orb}[B_{\gl}G]=1/|G|$ for every finite group $G$.
  For a compact orbispace $X$, we refer to $\chi_{\orb}[X]$
  as the {\em orbispace Euler characteristic} of $X$.
\end{defn}

Many examples of compact orbispaces are global quotients $G\dbs A$
for discrete groups $G$ and finite proper $G$-CW-complexes $A$.
We now give formulas for their orbispace Euler characteristics.

\begin{eg}\label{eg:chi of G dbs A}\ 
  \begin{enumerate}[(i)]
\item  Let $G$ be a discrete group and $A$ a finite proper $G$-CW-complex.
  Applying the homomorphism $\chi_{\orb}:A_{\orb}\to\mQ$ to the formula \eqref{eq:GmodA_in_Aorb}
  that expresses the global quotient orbispace $G\dbs A$ in terms of the distinguished
  basis of $A_{\orb}$ yields the equality of rational numbers
  \[ \chi_{\orb}[G\dbs A] \ = \ 
    \sum_{n\geq 0} (-1)^n\cdot \sum_{G \sigma} \frac{1}{|\stab(\sigma)|}.\]
  If $G$ happens to act {\em freely} on $A$, then this becomes
  \[ \chi_{\orb}[G\dbs A] \ = \ 
    \sum_{n\geq 0} (-1)^n\cdot | \text{equivariant $n$-cells}| \ = \ \chi_{\mathbb{Q}}(G\bs A),\]
  the Euler characteristic of the orbit space.
\item
  We let $\Gamma$ be a finite index subgroup of a discrete group $G$,
  and we let $A$ be a finite proper $G$-CW-complex. Then 
  \[ \chi_{\orb}[G\dbs A]\ = \ \frac{1}{[G:\Gamma]}\cdot\chi_{\orb}[\Gamma\dbs A]. \]
  Indeed, both sides are additive for disjoint unions and homotopy pushouts in $A$,
  so it suffices to check the claim for $A=G/H$ for all finite subgroups $H$ of $G$.
  In that case,
  \begin{align*}
    | \Gamma\bs G| \
    &= \ \sum_{\Gamma g H\in \Gamma\bs G/H} |\Gamma\bs (\Gamma g H)| \\
    &= \  \sum_{\Gamma g H\in \Gamma\bs G/H} |(\Gamma^g \cap {H})\bs H| \
    = \  \sum_{\Gamma g H\in \Gamma\bs G/H} |H| / |\Gamma\cap {^g H}|,
  \end{align*}
  and hence
 \begin{align*}
   [G:\Gamma]\cdot \chi_{\orb}&[G\dbs (G/H)]\
    = \ |\Gamma\bs G| / |H|\
    = \ \sum_{\Gamma g H\in \Gamma\bs G/H} \frac{1}{|\Gamma\cap{^g H}|}\\
    &= \    \sum_{\Gamma g H\in \Gamma\bs G/H} \chi_{\orb}[\Gamma\dbs (\Gamma/\Gamma\cap{^g H})]\
    = \ \chi_{\orb}[\Gamma\dbs (G/H)]. 
 \end{align*}
\item
  We let $G$ be a discrete group that is virtually torsion-free,
  and we let $A$ be a finite proper $G$-CW-complex.
  We let $\Gamma$ be any torsion-free subgroup of $G$ of finite index;
  then $A$ is a finite free $\Gamma$-CW-complex, so items
  (i) and (ii) yield the relation
  \[ \chi_{\orb}[G\dbs A]\ = \ \frac{1}{[G:\Gamma]}\cdot\chi_{\orb}[\Gamma\dbs A]
    \ = \ \frac{1}{[G:\Gamma]}\cdot\chi_{\mathbb{Q}}(\Gamma\bs A).\]
  The right hand side is thus independent of the choice of finite index torsion-free subgroup $\Gamma$,
  and usually taken as the definition
  of the {\em orbifold Euler characteristic} of the $G$-space $A$.
  We conclude that in this situation
  our orbispace Euler characteristic specializes to the orbifold Euler characteristic.
\item
  For a {\em finite} group $G$ and every finite $G$-CW-complex $A$,
  we can take $\Gamma=\{1\}$ in the previous item (iii).
  We deduce that the orbispace Euler characteristic of the global quotient orbispace is
  \[\chi_{\orb}[G\dbs A]\ =\  \chi_{\mathbb{Q}}(A)/|G|,  \]
  where $\chi_{\mathbb{Q}}(A)$ is the ordinary Euler characteristic of the underlying non-equivariant space.
\item
  Let $G$ be a countable discrete group that admits a finite $G$-CW-model
  for $\un{E}G$, the universal $G$-space for proper actions. 
  Suppose that $G$ is also virtually torsion-free, and that $\Gamma$
  is any finite index torsion-free subgroup.
  Then item (iii) applies to $A=\un{E}G$, and yields
  \[ \chi_{\orb}[B_{\gl}G]\ = \ \chi_{\orb}[G\dbs \un{E}G]\ = \ 
    \frac{1}{[G:\Gamma]}\cdot\chi_{\mathbb{Q}}(B\Gamma). \]
  Thus the orbispace Euler characteristic of the global classifying space
  $B_{\gl}G= G\dbs\un{E}G$ is the `virtual Euler characteristic' of $G$ as defined by Wall \cite{Walleuler},
  also called the orbifold Euler characteristic of $G$.
  \end{enumerate}
\end{eg}

\begin{remark}[Ring structure on $A_{\orb}$]
  The terminology orbispace Burnside {\em ring} promises a ring structure
  on the abelian group $A_{\orb}$.
  We briefly indicate how this ring structures arises; since we do not need the
  multiplication for this paper, we refrain from giving full details.
  The topological category $\Orb_{\Fin}$
  has a continuous symmetric monoidal structure that is given by product of groups
  on objects; on morphism spaces, the monoidal structure is the
  effect on homotopy orbits by conjugation of the product of group monomorphisms.
  The $\infty$-category obtained as the coherent nerve of $\Orb_{\Fin}$ thus inherits
  a symmetric monoidal structure. 
  We emphasize that this symmetric monoidal structure on $N\Orb_{\Fin}$ is {\em not} cartesian;
  this stems from the fact that
  the two components of a group monomorphism into a product need not be injective.
  
  Still, a symmetric monoidal structure on an $\infty$-category induces a symmetric monoidal
  structure, the {\em Day convolution product}, on the associated $\infty$-category
  of anima-valued presheaves, see e.g. \cite[Section 3 and Corollary 3.25]{LNP}.
  In our situation, this becomes a symmetric monoidal
  structure, the {\em box product} $\boxtimes$, on the $\infty$-category of orbispaces.
  The key property of the box product for our purposes are:
  \begin{itemize}
  \item the box product preserves colimits in each variable;
  \item for finite groups $G$ and $K$, the box product $(B_{\gl} G)\boxtimes(B_{\gl}K)$
    is equivalent to $B_{\gl}( G\times K)$.
  \end{itemize}
  From these properties one can reason that the assignment 
  \[\cdot\ : \  A_{\orb}\times A_{\orb}\ \to \ A_{\orb}, \quad [X]\cdot [Y]\ = \ [X\boxtimes Y]\]
  is well-defined, biadditive, and that it defines a commutative ring structure
  on $A_{\orb}$ with unit object the terminal orbispace $\ast\simeq B_{\gl}\{1\}$.
  In particular, $[B_{\gl}G]\cdot [B_{\gl}K] =  [B_{\gl}(G\times K)]$,
  i.e., the distinguished basis of $A_{\orb}$ is a monomial basis for the ring structure.

  The orbispace Euler characteristic $\chi_{\orb} : A_{\orb}\to  \mQ$
  introduced in Definition \ref{def:orb_chi} is a ring homomorphism.
  It suffices to check the multiplicativity on the generators,
  where it holds by
  \begin{align*}
    \chi_{\orb}[B_{\gl}G]\cdot\chi_{\orb}[B_{\gl}K]\
    &= \ 1/|G|\cdot 1/|K|\ = \  1/|G\times K| \\
    &= \ \chi_{\orb}[B_{\gl}(G\times K)]\ = \ \chi_{\orb}([B_{\gl}G]\cdot[B_{\gl}K]) .  
  \end{align*}
\end{remark}

\section{Orbispaces and Morava \texorpdfstring{$K$}{K}-theory} \label{sec: Morava Euler char general}

We let $K(n)$ denote the $n$-th Morava $K$-theory spectrum at the prime $p$.
In this section we identify the Morava $K$-theory Euler characteristic of the underlying
space of a compact orbispace $Y$ with the rational Euler characteristic of
another space $Y\td{\mZ_p^n}$.
Here $\mZ_p$ is the group of $p$-adic integers.
As we explain in the proof, for $Y=B_{\gl}G$ with $G$ finite,
this specializes to \cite[Theorem B]{HKR}.

\begin{construction}
  For an orbispace $Y$, we define a space $Y\td{\mZ_p^n}$ by
  \[ Y\td{\mZ_p^n}\ = \  \coprod_{N\leq \mZ_p^n\text{ f.i.}}  Y( \mZ_p^n/N),\]
  where the disjoint union is indexed by all finite index subgroups of $\mZ_p^n$.
\end{construction}

Particularly important orbispaces are the global quotients $G\dbs A$,
for a discrete group $A$.
The next theorem identifies the spaces $(G\dbs A)\td{\mZ_p^n}$ more explicitly.
Extending the notation from \cite{HKR},
we let $G_{n,p}$ denote the set of
$n$-tuples $(g_1, \dots, g_n)$ of commuting elements of $G$,
with each $g_i$ having $p$-power order.
The group $G$ acts on $G_{n,p}$ by simultaneous conjugation, and we
write $G\bs G_{n,p}$ for the set of conjugacy classes.

\begin{theorem} \label{thm:eval for orbits}
  Let $G$ be a discrete group and $A$ a $G$-space.
  Then there is a natural weak equivalence 
  \[  \coprod_{[g_1,\dots,g_n] \in G \bs G_{n,p}} E C\td{g_1,\dots,g_n}\times_{C\td{g_1,\dots,g_n}}
    A^{\td{g_1,\dots,g_n}}\ \xra{\ \simeq \ } \ (G\dbs A)\td{\mZ_p^n}. \]
  Here $\td{g_1,\dots,g_n}$ is the subgroup generated by $g_1,\dots,g_n$,
  and $C\td{g_1,\dots,g_n}$ is its centralizer in $G$.
  In particular, there is a weak equivalence
  \[  \coprod_{[g_1,\dots,g_n] \in G \bs G_{n,p}} B C\td{g_1,\dots,g_n}
    \ \xra{\ \simeq \ } \ (B_{\gl }G)\td{\mZ_p^n}. \]
\end{theorem}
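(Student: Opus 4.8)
The plan is to unpack both sides of the desired equivalence and produce the map componentwise, using the explicit homotopy-orbit description of $(G\dbs A)(K)$ from Example \ref{example: global quotient}. First I would spell out what $(G\dbs A)\td{\mZ_p^n}$ is: by construction it is the disjoint union, over all finite-index subgroups $N\leq \mZ_p^n$, of the spaces $(G\dbs A)(\mZ_p^n/N)$, and the latter is the homotopy orbit space $EG\times_G\bigl(\coprod_{\alpha\in\Mono(\mZ_p^n/N,\,G)} A^{\Img(\alpha)}\bigr)$. Choosing conjugacy-class representatives, this is equivalent to $\coprod_{[\alpha]\in G\bs\Mono(\mZ_p^n/N,\,G)} EC(\alpha)\times_{C(\alpha)} A^{\Img(\alpha)}$.

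The key step is a bookkeeping bijection. A monomorphism $\alpha\colon \mZ_p^n/N\to G$ for some finite-index $N$ is the same data as a homomorphism $\mZ_p^n\to G$ with kernel exactly $N$ (so that $N$ is not really extra data), i.e.\ a continuous homomorphism $\mZ_p^n\to G$ with finite image; and since $\mZ_p$ is procyclic and topologically generated by $1$, a continuous homomorphism $\mZ_p^n\to G$ is determined by the images $g_1,\dots,g_n$ of the $n$ topological generators, which must be pairwise commuting elements of $p$-power order — exactly an element of $G_{n,p}$. Under this correspondence, $\Img(\alpha)=\td{g_1,\dots,g_n}$ and $C(\alpha)=C\td{g_1,\dots,g_n}$, and the $G$-conjugation action on monomorphisms matches the simultaneous conjugation action on tuples. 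So assembling over all $N$ at once, $\coprod_{N\text{ f.i.}}\,G\bs\Mono(\mZ_p^n/N,G)$ is in natural bijection with $G\bs G_{n,p}$. Feeding this bijection into the homotopy-orbit description above yields the asserted weak equivalence
\[ \coprod_{[g_1,\dots,g_n]\in G\bs G_{n,p}} EC\td{g_1,\dots,g_n}\times_{C\td{g_1,\dots,g_n}} A^{\td{g_1,\dots,g_n}}\ \xra{\ \simeq\ }\ (G\dbs A)\td{\mZ_p^n}. \]
The second, unparametrized statement is the special case $A=\ast$, for which $A^{\td{g_1,\dots,g_n}}=\ast$ and the summand collapses to $BC\td{g_1,\dots,g_n}=EC\td{g_1,\dots,g_n}\times_{C\td{g_1,\dots,g_n}}\ast$; one also invokes the equivalence $G\dbs\ast=B_{\gl}G$ from Example \ref{example: global quotient}.

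I expect the main obstacle to be naturality rather than the pointwise statement: one must check that the map constructed summand-by-summand really is induced by a map of spaces that is natural in the pair $(G,A)$ (and, implicitly, compatible with the homotopy-colimit functoriality used to define $G\dbs A$ as an orbispace), rather than just an abstract bijection on path components together with ad hoc equivalences on each piece. The cleanest route is to avoid choosing representatives and instead identify $(G\dbs A)(\mZ_p^n/N)$ directly as a homotopy colimit over the translation category $\underline{\Mono}(\mZ_p^n/N,G)$ as in Example \ref{example: global quotient}, observe that $\coprod_N \underline{\Mono}(\mZ_p^n/N,G)$ is equivalent (as a translation category) to the translation category of the $G$-action on $G_{n,p}$, and then note that the target functor $A\sharp(-)$ transported along this equivalence is exactly the functor $(g_1,\dots,g_n)\mapsto A^{\td{g_1,\dots,g_n}}$ on that translation category. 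The comparison map is then the canonical identification of the two homotopy colimits, and naturality in $(G,A)$ is automatic. The only mild subtlety worth a sentence is continuity/finiteness: a homomorphism out of $\mZ_p^n$ automatically has finite ($p$-power) image, and conversely every tuple in $G_{n,p}$ extends uniquely to such a homomorphism, so no completeness or topological hypotheses on $G$ are needed.
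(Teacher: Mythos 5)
Your proposal is correct and follows essentially the same route as the paper's proof: identifying continuous homomorphisms $\mZ_p^n\to G$ with tuples in $G_{n,p}$, decomposing by kernels to match the disjoint union over finite-index subgroups $N$, and passing from $G$-homotopy orbits to centralizer homotopy orbits via the equivariant decomposition into conjugacy classes. The only difference is cosmetic (you unpack the target first and reindex, while the paper assembles the source first and decomposes), and your remark that continuity of the homomorphism is what forces the image to be finite is the same observation the paper makes implicitly.
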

\begin{proof}
  Every tuple $(g_1,\dots,g_n)$ of commuting $p$-power order elements
  determines a continuous homomorphism $\mathbb{Z}_p^n\to G$
  by sending the standard topological basis $(e_1, \dots, e_n)$ to the
  tuple $(g_1,\dots,g_n)$.
  This establishes a bijection
  \[  \Hom^{\cont}(\mathbb{Z}_p^n, G) \cong G_{n,p}, \;\;\; \alpha \mapsto (\alpha(e_1), \dots, \alpha(e_n)) \]
  that is equivariant for the conjugation action of $G$ on both sides.
  Also, the image of $\alpha$ is clearly the subgroup generated by
  $\alpha(e_1), \dots, \alpha(e_n)$.
  For the course of the proof it will be convenient to work with
  the $G$-set $\Hom^{\cont}(\mathbb{Z}_p^n, G)$ instead of the $G$-set $G_{n,p}$.

  We let the group $G$ act on the space
  $\coprod_{\alpha\in \Hom^{\cont}(\mZ_p^n, G)}  A^{\Img \alpha}$
  by $g\cdot(a,\alpha)=(g a,c_g\circ\alpha)$. We claim that the homotopy orbit space
  \begin{equation}\label{eq:homotopy_orbits}
 E G\times_G \bigg(  \coprod_{\alpha\in \Hom^{\cont}(\mZ_p^n, G)}  A^{\Img \alpha}\bigg)    
  \end{equation}
  is homeomorphic to $(G\dbs A)\td{\mZ_p^n}$.  
  If $\alpha:\mZ_p^n\to G$ is a continuous homomorphism,
  then $N=\ker(\alpha)$ is a finite index subgroup,
  i.e., it is one of the subgroups that index the
  disjoint union in the definition of $(G\dbs A)\td{\mZ_p^n}$.
  We decompose the disjoint union according to the kernels of the homomorphisms as 
  \[    \coprod_{\tiny{N\leq \mZ_p^n \;\text{f.i.}}}
    \coprod_{\bar\alpha\in\Mono( (\mZ_p^n)/N,G)} A^{\Img\bar\alpha}  ; \]
  in the inner disjoint union, $\bar\alpha$ is the unique homomorphism
  whose composite with the projection $\mZ_p^n\to\mZ_p^n/N$ is $\alpha$.
  Clearly, $\bar\alpha$ and $\alpha$ then have the same image.
  Homotopy orbits commute with disjoint unions.
  So the space \eqref{eq:homotopy_orbits} is homeomorphic to
  \begin{align*}
    \coprod_{\tiny{N\leq \mZ_p^n \;\text{f.i.} }}
    &E G\times_G\bigg(  \coprod_{\bar\alpha\in\Mono( \mZ_p^n/N,G)} A^{\Img\bar\alpha} \bigg) \\
    &= \coprod_{\tiny{N\leq\mZ_p^n \;\text{f.i.}}}
      (G\dbs A)(\mZ_p^n/N)  =  (G\dbs A)\td{\mZ_p^n}.
  \end{align*}
  Now we choose representatives of the $G$-conjugacy classes of continuous homomorphisms
$\alpha:\mZ_p^n\to G$. These provide a $G$-equivariant decomposition
\[ \coprod_{\alpha\in \Hom^{\cont}(\mZ_p^n, G)}  A^{\Img \alpha}\ = \
\coprod_{[\alpha]\in G\bs\Hom^{\cont}(\mZ_p^n, G)}  G\times_{C(\alpha)}A^{\Img \alpha}\ .   \]
The homotopy orbit functor $E G\times_G-$ preserves disjoint unions, and
the spaces $E G\times_G(G\times_{C(\alpha)}A^{\Img \alpha})$ and 
$E C(\alpha)\times_{C(\alpha)} A^{\Img \alpha}$ are weakly equivalent. This proves the theorem.
\end{proof}

For every compact orbispace $Y$, the Morava $K$-theory $K(n)^*(Y(1))$
of the underlying space $Y(1)$ is finite-dimensional over $K(n)^*$,
compare Theorem \ref{thm:compact_finiteness} (iii).
So we can define the {\em Morava $K$-theory Euler characteristic} by
\[ \chi_{K(n)}(Y(1)) \ =  \
  \dim_{K(n)^*}(K(n)^{\text{even}}(Y(1)))\ -\ \dim_{K(n)^*}(K(n)^{\text{odd}}(Y(1))).\]

\begin{theorem}\label{thm:Euler}
  Let $p$ be a prime number, and $m,n\geq 1$, and let $Y$ be a compact orbispace.
  \begin{enumerate}[\em (i)]
  \item The total rational homology of the space $Y\td{\mZ_p^n}$
    is finite dimensional, and $K(m)^*(Y\td{\mZ_p^n})$ is
    finite dimensional over $K(m)^*$.
  \item The following relations between Euler characteristics hold:
    \begin{align*}
      \chi_{K(n)}(Y(1)) \ &= \  \chi_\mQ (Y\td{\mZ_p^n});\\
      \chi_{K(m+n)}(Y(1))\ &=\ \chi_{K(m)}( Y\td{\mZ_p^n}). 
    \end{align*}
  \end{enumerate}\end{theorem}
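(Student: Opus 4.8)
The plan is to argue by induction over the construction of compact orbispaces, reducing every assertion to the case $Y=B_{\gl}G$ with $G$ finite, where part (ii) becomes the Hopkins--Kuhn--Ravenel formula $\chi_{K(n)}(BG)=|G\bs G_{n,p}|$ together with an orbit-counting identity for commuting tuples.

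The one structural fact I would isolate first is that the functor $Y\mapsto Y\td{\mZ_p^n}$ preserves equivalences and homotopy pushouts and sends the empty orbispace to the empty space, since colimits of orbispaces are formed objectwise and a coproduct of homotopy pushout squares of spaces is again a homotopy pushout. For part (i), I would then show that the class of compact orbispaces $Y$ for which $Y\td{\mZ_p^n}$ has finite total rational homology and finite-dimensional $K(m)^*(Y\td{\mZ_p^n})$ contains the empty orbispace, is closed under homotopy pushouts by Mayer--Vietoris in $\mQ$-homology and in $K(m)$, and contains $B_{\gl}G$ for every finite $G$: indeed, by Theorem \ref{thm:eval for orbits} the space $(B_{\gl}G)\td{\mZ_p^n}$ is the \emph{finite} disjoint union $\coprod_{[g_1,\dots,g_n]\in G\bs G_{n,p}}BC\td{g_1,\dots,g_n}$ of classifying spaces of finite groups, whose rational homology sits in degree $0$ and whose Morava $K$-theory is finite-dimensional by Ravenel's theorem \cite{RavBG}.

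For part (ii), part (i) makes each of
\[ Y\mapsto \chi_{K(n)}(Y(1)),\qquad Y\mapsto \chi_\mQ(Y\td{\mZ_p^n}),\qquad Y\mapsto \chi_{K(m+n)}(Y(1)),\qquad Y\mapsto \chi_{K(m)}(Y\td{\mZ_p^n}) \]
a well-defined additive invariant of compact orbispaces, the pushout relation coming from additivity of Euler characteristics along the relevant Mayer--Vietoris sequences (using that $Y\mapsto Y(1)$ and $Y\mapsto Y\td{\mZ_p^n}$ preserve homotopy pushouts). Hence the difference of the two sides of each asserted identity is a $\mZ$-valued additive invariant, and by Theorem \ref{thm:basis}(i) it suffices to check vanishing on $Y=B_{\gl}G$ for finite $G$. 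There Theorem \ref{thm:eval for orbits} gives $(B_{\gl}G)\td{\mZ_p^n}\simeq\coprod_{[g_1,\dots,g_n]\in G\bs G_{n,p}}BC\td{g_1,\dots,g_n}$ with all centralizers finite, so $\chi_\mQ((B_{\gl}G)\td{\mZ_p^n})=|G\bs G_{n,p}|=\chi_{K(n)}(BG)$ by \cite[Theorem B]{HKR}, which is the first identity; and \cite[Theorem B]{HKR} applied to each finite group $C\td{g_1,\dots,g_n}$ turns the second identity into the purely group-theoretic statement
\[ |G\bs G_{m+n,p}| \ = \ \sum_{[g_1,\dots,g_n]\in G\bs G_{n,p}}\bigl|\,C\td{g_1,\dots,g_n}\bs (C\td{g_1,\dots,g_n})_{m,p}\,\bigr|. \]

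I would prove this identity using the $G$-equivariant projection $G_{m+n,p}\to G_{n,p}$ onto the last $n$ coordinates: its fiber over $(g_1,\dots,g_n)$ is $(C\td{g_1,\dots,g_n})_{m,p}$ and the conjugation-stabilizer of $(g_1,\dots,g_n)$ is the centralizer $C\td{g_1,\dots,g_n}$, so the preimage of a single $G$-orbit is $G$-isomorphic to $G\times_{C\td{g_1,\dots,g_n}}(C\td{g_1,\dots,g_n})_{m,p}$ and contributes $|C\td{g_1,\dots,g_n}\bs (C\td{g_1,\dots,g_n})_{m,p}|$ many $G$-orbits of $G_{m+n,p}$; summing over $G\bs G_{n,p}$ finishes it. The genuinely non-formal ingredients are Ravenel's finiteness theorem, the input \cite[Theorem B]{HKR}, and this orbit count; the only point requiring care is the homotopy-pushout-preservation of $Y\mapsto Y\td{\mZ_p^n}$ underlying the additivity arguments, which I do not expect to present a serious obstacle.
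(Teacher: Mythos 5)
Your proposal is correct and follows essentially the same route as the paper: reduce by closure under homotopy pushouts (using that $Y\mapsto Y(1)$ and $Y\mapsto Y\td{\mZ_p^n}$ preserve colimits and that Euler characteristics are additive) to the case $Y=B_{\gl}G$ with $G$ finite, then apply Theorem \ref{thm:eval for orbits}, Ravenel's finiteness theorem, and \cite[Theorem B]{HKR}, together with the orbit-counting bijection $\coprod_{[g_1,\dots,g_n]\in G\bs G_{n,p}} C\td{g_1,\dots,g_n}\bs(C\td{g_1,\dots,g_n})_{m,p}\cong G\bs G_{m+n,p}$. Your equivariant-projection argument for that bijection is a clean justification of what the paper simply calls a straightforward algebraic fact.
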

\begin{proof}
  We start with the special case $Y=B_{\gl}G$ for a finite group $G$.
  In this case, Theorem \ref{thm:eval for orbits}
  identifies the space $(B_{\gl}G)\td{\mZ_p^n}$
  with a finite disjoint union, indexed by the set $G\bs G_{n,p}$,
  of classifying spaces of finite groups;
  so its rational homology is concentrated in dimension 0,
  where its dimension equals the cardinality of the set $G\bs G_{n,p}$.
  This cardinality agrees with the Morava $K$-theory Euler characteristic of $B G$
  by  \cite[Theorem B]{HKR}.
  Similarly, $K(m)^*((B_{\gl}G)\td{\mZ_p^n})$
  is finite dimensional over $K(m)^*$ by Ravenel's theorem \cite{RavBG}.
  Moreover,
  \begin{align*}
    \chi_{K(m)}( (B_{\gl}G)\td{\mZ_p^n})
    &= \sum_{[g_1, \dots, g_n] \in G \bs G_{n,p}} \chi_{K(m)}( B C\td{g_1, \dots, g_n}) \\
    &= \sum_{[g_1, \dots, g_n] \in G \bs G_{n,p}} |C\td{g_1, \dots, g_n}\bs C\td{g_1, \dots, g_n}_{m,p}| \\
    &= |G\bs G_{m+n,p}|
    =\chi_{K(m+n)}( B G) = \chi_{K(m+n)}( (B_{\gl}G)(1)).
  \end{align*}
  The first equality is Theorem \ref{thm:eval for orbits};
  the second and fourth equalities are \cite[Theorem B]{HKR};
  the third equality is the straightforward algebraic fact that
  upon choosing representatives of the $G$-conjugacy classes in $G_{n,p}$,
  the map
    \[\coprod_{[g_1,\dots, g_n] \in G\bs G_{n,p}} C\td{g_1,\dots,g_n} \bs C\td{g_1,\dots,g_n}_{m,p} \to G \bs G_{m+n,p}\]
    sending $[x_1, \dots, x_m] \in C\td{g_1, \dots, g_n} \bs C\td{g_1, \dots, g_n}_{m,p} $
    to $[x_1,\dots,x_m, g_1,\dots,g_n]$ is bijective.
  So the theorem holds for the global classifying spaces of all finite groups.
  
  The theorem clearly also holds for the empty orbispace.
  Since the functors sending $Y$ to $Y(1)$ and $Y\td{\mZ_p^n}$ preserve colimits,
  and by the additivity of $\chi_{K(n)}$ and $\chi_\mQ$ for homotopy pushouts of spaces,
  the class of compact orbispaces $Y$ for which the theorem holds is closed under homotopy pushouts.
  So theorem holds for all compact orbispaces.
\end{proof}

Next we examine the special case $Y=G\dbs A$,
where $G$ is a discrete group and $A$  a finite proper $G$-CW complex.
For a $G$-space $A$, we continue to denote by $A^{\td{g_1, \dots, g_n}}$
the fixed space  with respect to the subgroup generated by the tuple $(g_1, \dots, g_n)$.

\begin{cor} \label{cor:euler morava quotients}
  Let $G$ be a discrete group and let $A$ be a finite proper $G$-CW complex.
  Then for all $m,n \geq 1$,
  \begin{align*}
   \chi_{K(n)}(EG \times_G A)&=\sum_{[g_1,\dots,g_n] \in G \bs G_{n,p} }
    \chi_{\mathbb{Q}}(E C\td{g_1, \dots, g_n}\times_{C\td{g_1, \dots, g_n}} A^{\td{g_1,\dots,g_n}}),\\
 \chi_{K(m+n)}(EG \times_G A)&= \sum_{[g_1,\dots,g_n] \in G \bs G_{n,p}} \chi_{K(m)} (EC\td{g_1, \dots, g_n} \times_{C\td{g_1, \dots, g_n}} A^{\td{g_1, \dots, g_n}}).
  \end{align*}
  In particular, if $G$ has a finite model for $\underline{E}G$, then
  \begin{align*}
    \chi_{K(n)}(B G)\ &= \ \sum_{[g_1,\dots,g_n] \in G \bs G_{n,p} }\chi_\mQ(B C\td{g_1, \dots, g_n}), \\
\chi_{K(m+n)}(BG)&= \sum_{[g_1,\dots,g_n] \in G \bs G_{n,p}} \chi_{K(m)} (BC\td{g_1, \dots, g_n}).
  \end{align*}
\end{cor}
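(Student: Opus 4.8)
The plan is to deduce the corollary by feeding the orbispace $Y = G\dbs A$ into Theorem~\ref{thm:Euler} and then unwinding the right-hand side with Theorem~\ref{thm:eval for orbits}. Since $A$ is a finite proper $G$-CW complex, the global quotient $G\dbs A$ is a compact orbispace, as observed above, and its underlying space is $(G\dbs A)(1) = EG\times_G A$. Thus Theorem~\ref{thm:Euler}, applied to $Y = G\dbs A$, immediately gives
\[ \chi_{K(n)}(EG\times_G A) \ = \ \chi_\mQ\bigl((G\dbs A)\td{\mZ_p^n}\bigr) , \qquad \chi_{K(m+n)}(EG\times_G A) \ = \ \chi_{K(m)}\bigl((G\dbs A)\td{\mZ_p^n}\bigr) , \]
and part~(i) of that theorem ensures both Euler characteristics on the right are defined.

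Next I would substitute the explicit description of $(G\dbs A)\td{\mZ_p^n}$ from Theorem~\ref{thm:eval for orbits}, namely the weak equivalence
\[ \coprod_{[g_1,\dots,g_n]\in G\bs G_{n,p}} EC\td{g_1,\dots,g_n}\times_{C\td{g_1,\dots,g_n}} A^{\td{g_1,\dots,g_n}} \ \xra{\ \simeq\ }\ (G\dbs A)\td{\mZ_p^n} . \]
Both $\chi_\mQ$ and $\chi_{K(m)}$ are additive over disjoint unions, and the finiteness assertion in Theorem~\ref{thm:Euler}(i) forces all but finitely many of the summands on the left to be empty, while each of the remaining ones has finite total rational homology and finite-dimensional $K(m)$-theory. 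Hence the sums over $G\bs G_{n,p}$ in the statement are finite sums of well-defined Euler characteristics, and summing termwise yields the first two displayed identities of the corollary.

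For the last two formulas I would run the same argument with $Y = B_{\gl}G$, which is a compact orbispace precisely when $G$ admits a finite $G$-CW-model for $\un{E}G$. The underlying space is $(B_{\gl}G)(1) = BG$, and the second weak equivalence of Theorem~\ref{thm:eval for orbits} identifies $(B_{\gl}G)\td{\mZ_p^n}$ with $\coprod_{[g_1,\dots,g_n]\in G\bs G_{n,p}} BC\td{g_1,\dots,g_n}$, so the same bookkeeping produces the stated equalities. Equivalently, one can take $A = \un{E}G$ in the first part and use that $\td{g_1,\dots,g_n}$ is a finite abelian $p$-group, so $(\un{E}G)^{\td{g_1,\dots,g_n}}\simeq\ast$ and $EG\times_G\un{E}G\simeq BG$.

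The genuine content of the corollary is already contained in its two inputs: Theorem~\ref{thm:eval for orbits} carries the geometric identification of the fixed-point data in terms of centralizers, and Theorem~\ref{thm:Euler} carries the chromatic information, ultimately resting on the Hopkins--Kuhn--Ravenel formula $\chi_{K(n)}(BG)=|G\bs G_{n,p}|$ for finite $G$. Consequently there is no real obstacle left at this stage; the only point demanding a bit of care is checking that compactness of $G\dbs A$ makes the a priori infinite indexing set $G\bs G_{n,p}$ harmless, which is exactly what the finiteness part of Theorem~\ref{thm:Euler} provides.
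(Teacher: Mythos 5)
Your proposal is correct and follows exactly the paper's own argument: apply Theorem~\ref{thm:Euler} to the compact orbispace $G\dbs A$, identify the underlying space with $EG\times_G A$, and substitute the centralizer decomposition of $(G\dbs A)\td{\mZ_p^n}$ from Theorem~\ref{thm:eval for orbits}. The additional remarks on finiteness of the indexing set and the specialization to $A=\un{E}G$ are consistent with how the paper handles these points.
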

\begin{proof}
  We apply Theorem \ref{thm:Euler} to the compact orbispace $G\dbs A$.
  The underlying space is equivalent to the homotopy orbit space
  \[ (G\dbs A)(1)\ \simeq \ E G\times_G A . \]
  Theorem \ref{thm:eval for orbits} provides a splitting 
  \begin{align*}
  (G \dbs A)\td{\mZ_p^n}\ 
  &\simeq\  \coprod_{[g_1, \dots, g_n]\in G \bs G_{n,p}} E C\td{g_1, \dots, g_n}\times_{C\td{g_1, \dots, g_n}}  A^{\td{g_1, \dots, g_n}}.
\end{align*}
Specializing Theorem \ref{thm:Euler} to the orbispace $G\dbs A$ thus yields the result.
\end{proof}

We recall a standard fact:

\begin{lemma} \label{lemma: finite model for centralizers}
  Let $H$ be a subgroup of a discrete group $G$, with centralizer $C(H)$.
  Let $A$ be a finite proper $G$-CW complex.
  Then the $C(H)$-space $A^H$ is a finite proper $C(H)$-CW complex. 
\end{lemma}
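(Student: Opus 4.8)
The plan is to build the $C(H)$-CW structure on $A^H$ directly from a $G$-CW structure on $A$ and then check properness and finiteness one equivariant cell at a time. First I would recall the standard fact that for any $G$-CW complex $A$ and any subgroup $H\le G$ the fixed-point space $A^H$ carries a natural CW structure: taking $H$-fixed points commutes with the pushouts and sequential colimits along closed inclusions that assemble the skeleta of $A$, and for an equivariant cell one has $(G/K\times D^m)^H=(G/K)^H\times D^m$, where $(G/K)^H=\{\,gK : g^{-1}Hg\le K\,\}$ is discrete. The one point that needs attention is equivariance for $C(H)$: if $c\in C(H)$ and $g^{-1}Hg\le K$ then $(cg)^{-1}H(cg)=g^{-1}Hg\le K$, so $C(H)$ acts on $(G/K)^H$ by left translation, and $A^H$ becomes a $C(H)$-CW complex whose equivariant $m$-cells are indexed by the $C(H)$-orbits of $(G/K)^H$ as $G/K\times D^m$ ranges over the equivariant $m$-cells of $A$. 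Properness is then immediate, since the $C(H)$-stabilizer of a coset $gK\in(G/K)^H$ equals $C(H)\cap gKg^{-1}$, which is finite because $A$ is proper and hence every such $K$ is finite.

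The substantive point is finiteness. As $A$ has only finitely many equivariant cells, it suffices to show that $(G/K)^H$ has finitely many $C(H)$-orbits for each finite group $K$ occurring as a cell stabilizer. If $H$ is infinite this set is empty, so assume $H$ is finite. For $gK\in(G/K)^H$ the subgroup $g^{-1}Hg$ is a $G$-conjugate of $H$ lying inside $K$, and $K$ has only finitely many subgroups; so I would partition $(G/K)^H$ into the finitely many blocks $\{\,gK : g^{-1}Hg=H_j\,\}$, where $H_1,\dots,H_r\le K$ are the distinct $G$-conjugates of $H$ contained in $K$. A short computation with left translation identifies each block, as an $N_G(H)$-set, with $N_G(H)/\bigl(N_G(H)\cap g_jKg_j^{-1}\bigr)$ for a fixed $g_j$ with $g_j^{-1}Hg_j=H_j$; in particular each block is a single $N_G(H)$-orbit. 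Finally, conjugation gives an injection $N_G(H)/C(H)\hookrightarrow\operatorname{Aut}(H)$, and $\operatorname{Aut}(H)$ is finite because $H$ is; hence $C(H)$ has finite index in $N_G(H)$, so the finite set $C(H)\backslash N_G(H)$ surjects onto the set of $C(H)$-orbits in each block. Thus $(G/K)^H$ has at most $r\cdot[N_G(H):C(H)]$ orbits under $C(H)$, which is finite.

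Assembling these steps, $A^H$ is a $C(H)$-CW complex with only finitely many equivariant cells, all of whose isotropy groups are finite, i.e.\ a finite proper $C(H)$-CW complex. The step I expect to be the crux is this finiteness count, and within it the identification of each block of $(G/K)^H$ with a single $N_G(H)$-orbit together with the inequality $[N_G(H):C(H)]<\infty$ for finite $H$; the CW-structural claim and properness are routine.
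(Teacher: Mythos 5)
Your proof is correct. The overall reduction is the same as the paper's: both arguments observe that $A^H$ is a $C(H)$-CW complex with finite isotropy for general reasons, and that finiteness comes down to showing $(G/K)^H$ has finitely many $C(H)$-orbits for each finite cell stabilizer $K$. Where you diverge is in how that orbit count is carried out. The paper sends $C(H)\cdot(gK)$ to the class of the monomorphism $c_g^{-1}\colon H\to K$ in $K\bs\Mono(H,K)$ and checks in one step that this map is injective into a finite set. You instead cover $(G/K)^H$ by the sets of cosets whose representatives conjugate $H$ onto a fixed subgroup $H_j\le K$, show each such set is a single $N_G(H)$-orbit, and then bound the number of $C(H)$-orbits inside an $N_G(H)$-orbit by $[N_G(H):C(H)]\le|\operatorname{Aut}(H)|<\infty$. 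The two bookkeepings are closely related --- the paper's monomorphism $c_g^{-1}$ records both the conjugate $g^{-1}Hg$ and the isomorphism onto it, whereas your blocks record only the conjugate and the $\operatorname{Aut}(H)$ factor absorbs the rest --- but yours passes through the normalizer and yields the explicit bound $r\cdot[N_G(H):C(H)]$, while the paper's injection is shorter. One cosmetic point: your ``blocks'' $\{gK: g^{-1}Hg=H_j\}$ need not be disjoint, since replacing $g$ by $gk$ replaces $H_j$ by a $K$-conjugate; they form a finite cover rather than a partition of $(G/K)^H$, which is all the counting argument needs.
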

\begin{proof}
  For every $G$-CW-complex $A$, the space $A^H$ is a $C(H)$-CW complex. 
  All points in $A$ have finite stabilizers in $G$,
  so all points in $A^H$ in particular have finite stabilizers in $C(H)$.
  For the finiteness claim about the $C(H)$-CW-structure it suffices to check the case $A=G/K$
  for all finite subgroups $K$ of $G$.
  In this case, we must show that the set $(G/K)^H$ has finitely many $C(H)$-orbits.
  If $g K\in G/K$ is $H$-fixed, then $H^g\leq K$,
  so conjugation by $g$ is a monomorphism $c_g^{-1} : H \to K$, with $c_g^{-1}(h)=g^{-1}h g $.
  The map
  \[ C(H) \bs (G/K)^H \to K\bs \Mono(H,K) , \quad C(H)\cdot (g K)\mapsto K\cdot c_g^{-1} \]
  is injective, where $K$ acts on the set of monomorphisms by conjugation. 
%%%  Proof:
%%%  Suppose $c_k\circ c_g=c_{\bar g}:H\to K$ for $g K, \bar g K$ in $(G/K)^H$.
%%%  Then
%%%  \[ c_{\bar g^{-1} g k}\ = \ c_k\circ c_g \circ c_{\bar g}^{-1}\]
%%%  is the identity on $H$, so $\bar g^{-1} g k\in C(H)$.
%%%  Hence $g K=(g k)K$ is in the same $C(H)$-orbit as $\bar g K$.
  Since the target is finite, so is the source.
\end{proof}

\begin{remark}
  We let $G$ be a discrete group and $A$ a finite proper $G$-CW-complex. Then the projection
  \[EG \times_G A \to G \bs A\]
   induces an isomorphism in rational homology. In particular, if $G$ has a finite model for $\underline{E}G$, then the rational homology of $BG$ is isomorphic to the rational homology of the quotient $\underline{B}G=G \bs \underline{E}G$ and thus $\chi_{\mathbb{Q}}(BG)=\chi_{\mathbb{Q}}(\underline{B}G)$. 
 More generally, for all $(g_1, \dots, g_n)\in G_{n,p}$, applying
  Lemma \ref{lemma: finite model for centralizers} to the finite subgroup $H=\td{g_1, \dots, g_n}$ generated by $g_1, \dots, g_n$,
  shows that the $C\td{g_1, \dots, g_n}$-space $A^{\td{g_1, \dots, g_n}}$
  is a finite proper $C\td{g_1, \dots, g_n}$-CW complex.
  So the projection 
  \[ E C\td{g_1, \dots, g_n}\times_{C\td{g_1, \dots, g_n}}  A^{\td{g_1, \dots, g_n}}\to
    C\td{g_1, \dots, g_n}\bs A^{\td{g_1, \dots, g_n}}\]
  induces an isomorphism in rational homology.
  In the first equation of Corollary \ref{cor:euler morava quotients},
  the rational Euler characteristic of the homotopy orbit space
  can thus be replaced by the rational Euler characteristic of the strict orbit space
  $C\td{g_1, \dots, g_n}\bs A^{\td{g_1,\dots,g_n}}$. In particular if $G$ has a finite model for $\underline{E}G$, then $(\underline{E}G)^{\td{g_1, \dots, g_n}}$ is a finite model for  $\underline{E}C\td{g_1, \dots, g_n}$ and $\chi_{\mathbb{Q}}(BC\td{g_1, \dots, g_n})=\chi_{\mathbb{Q}}(\underline{B}C\td{g_1, \dots, g_n})$. 
\end{remark}

If the group $G$ is finite, then so are the groups $C\td{g_1, \dots, g_n}$,
and thus the rational Euler characteristic of $B C\td{g_1, \dots, g_n}$ is $1$.
So in this case, Corollary \ref{cor:euler morava quotients} specializes to
\cite[Theorem B (Part 1)]{HKR}, saying that $\chi_{K(n)}(B G)$ is the number of $G$-orbits
in $G_{n,p}$.
Since our proof relies on the results of \cite{HKR}, we are not reproving
the theorem of Hopkins, Kuhn and Ravenel, though.

Corollary \ref{cor:euler morava quotients} is also closely related
to \cite[Theorem 4.2]{Adem} and  \cite[Theorem 0.1]{LCrelle2}.
If we take the case $n=1$, then $K(1)$ is a summand of the mod $p$ complex $K$-theory. Our result then looks as follows:
\[ \chi_{K(1)}(EG \times_G A)=\sum_{[g] \in G \bs G_{1,p} } \chi_{\mathbb{Q}}(C\td{g} \bs A^{(g)})\] 
This agrees with the formula in \cite[Theorem 4.2]{Adem} and exactly picks out
the `$p$-primary' part of  \cite[Theorem 0.1]{LCrelle2}.

\begin{remark} \label{remark: alternative proof 1}
  One can prove Corollary \ref{cor:euler morava quotients} without reference to orbispaces,
  by directly working with finite proper $G$-CW complexes, as follows.
  Given $G$ a discrete group with finite $\underline{E}G$, the functors $K(n)^*(EG \times_G A)$ and 
  \[\bigoplus_{[g_1,\dots,g_n] \in G \bs G_{n,p} } H^*(C\td{g_1,\dots,g_n} \bs A^{\td{g_1,\dots,g_n}}; \mathbb{Q}) \]
  are proper equivariant cohomology theories in $A$;
  so by additivity and the Mayer--Vietoris property,
  it suffices to check the desired identity in the case $A=G/H$
  for finite subgroups $H$ of $G$.
  In that case, the map
  \[\coprod_{[g_1,\dots,g_n] \in G\bs G_{n,p}} C\td{g_1,\dots,g_n} \bs (G/H)^{\td{g_1,\dots,g_n}} \to H\bs H_{n,p} \]
  sending $[x H] \in C\td{g_1,\dots,g_n} \bs (G/H)^{\td{g_1,\dots,g_n}}$ to $[x^{-1}g_1x, \dots, x^{-1}g_n x]$ is bijective, where we have implicitly chosen representatives of the $G$-orbits
  of $G_{n,p}$. Hence
  \begin{align*}
    \chi_{K(n)}(EG \times_G G/H)
    &= \chi_{K(n)}(B H)=  \vert H \bs H_{n,p} \vert \\
    &= \sum_{[g_1,\dots,g_n] \in G \bs G_{n,p}} \vert C\td{g_1,\dots,g_n} \bs (G/H)^{\td{g_1,\dots,g_n}} \vert .
  \end{align*}
The second equality is \cite[Theorem B (Part 1)]{HKR}.
By the proof of Lemma \ref{lemma: finite model for centralizers},
$(G/H)^{\td{g_1,\dots,g_n}}$ has finitely many  $C\td{g_1,\dots,g_n}$-orbits, and hence 
\[
  \vert C\td{g_1,\dots,g_n} \bs (G/H)^{\td{g_1,\dots,g_n}} \vert =
  \chi_{\mathbb{Q}}(C\td{g_1,\dots,g_n} \bs (G/H)^{\td{g_1,\dots,g_n}})  . \]
\end{remark}

The following proposition records a standard fact for Euler characteristics
and follows similarly as the results of \cite[Section 6.6]{LBook} and \cite{KBro1}.
Indeed, both sides of the formula in (i) are additive invariants in the finite proper $G$-CW complex $A$,
and for $A=G/H$, where $H$ is finite the formula holds by \cite[Theorem B (Part 1)]{HKR}.
So (i) holds, and (ii) is a special case of (i). 

\begin{prop} \label{prop:euleralternating}
  Let $G$ be a discrete group.
  \begin{enumerate}[\em (i)]
  \item
    For every finite proper $G$-CW complex $A$ and  $n\geq 0$, 
    \[\chi_{K(n)}(EG \times_G A)=\sum_{G\sigma} (-1)^{n_\sigma} \vert H^{\sigma} \bs H^{\sigma}_{n,p} \vert,\]
    where the sum runs over all $G$-orbits of cells $\sigma$ of $A$, the number $n_\sigma$ is the dimension of $\sigma$ and $H^{\sigma}$ is the stabilizer of $\sigma$. 
  \item
    If $G$ admits a finite model for $\underline{E}G$, then
    \[\chi_{K(n)}(BG)=\sum_{G\sigma} (-1)^{n_\sigma} \vert H^{\sigma} \bs H^{\sigma}_{n,p} \vert,\] 
    where the sum runs over all $G$-orbits of cells $\sigma$ of the finite $G$-CW model for $\underline{E}G$, the number $n_\sigma$ is the dimension of $\sigma$ and $H^{\sigma}$ is the stabilizer of $\sigma$.
  \end{enumerate}
\end{prop}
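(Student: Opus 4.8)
\emph{Proof strategy.} The plan is to flesh out the argument sketched right before the statement: by induction over the equivariant cells of a $G$-CW structure on $A$, reduce to the case of a single orbit $A=G/H$ with $H$ finite, where the identity becomes the formula of Hopkins, Kuhn and Ravenel.

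First, the left-hand side is well-defined: the global quotient $G\dbs A$ is a compact orbispace with underlying space $(G\dbs A)(1)\simeq EG\times_G A$, so $K(n)^*(EG\times_G A)$ is finite-dimensional over $K(n)^*$ by Theorem \ref{thm:compact_finiteness}(iii). Both sides vanish for $A=\emptyset$. For the inductive step, suppose $A$ arises from a finite proper $G$-CW complex $A'$ with one fewer equivariant cell by attaching an $m$-cell along a $G$-map $G/H\times S^{m-1}\to A'$, with $H$ finite. The functor $EG\times_G(-)$ preserves colimits and carries pushouts along $G$-cofibrations to homotopy pushouts of spaces, so it presents $EG\times_G A$ as the homotopy pushout of $EG\times_G A'$ and $EG\times_G(G/H\times D^m)$ along $EG\times_G(G/H\times S^{m-1})$. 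As $D^m$ and $S^{m-1}$ carry trivial actions, $EG\times_G(G/H\times D^m)=EG\times_H D^m\simeq BH$ and $EG\times_G(G/H\times S^{m-1})\simeq BH\times S^{m-1}$; combining the Mayer--Vietoris sequence for $K(n)$-cohomology with the Künneth isomorphism for the field spectrum $K(n)$ gives
\[
\chi_{K(n)}(EG\times_G A)=\chi_{K(n)}(EG\times_G A')+(-1)^m\,\chi_{K(n)}(BH).
\]
Since $\chi_{K(n)}(BH)=|H\bs H_{n,p}|$ by \cite[Theorem B (Part 1)]{HKR}, the inductive hypothesis for $A'$ yields (i). Part (ii) is the special case $A=\un{E}G$: as $EG\times\un{E}G$ is a free contractible $G$-space, $EG\times_G\un{E}G\simeq BG$.

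I expect no serious obstacle; this is essentially bookkeeping once the two standard inputs are available, namely the finite-dimensionality of $K(n)^*(EG\times_G A)$ from Theorem \ref{thm:compact_finiteness} and the Mayer--Vietoris/Künneth formalism for $K(n)$. One point to keep in mind is that the right-hand side is a priori attached to a choice of $G$-CW structure and is not visibly a homotopy invariant of $A$; this is harmless, since it is identified with the left-hand side, which is. For readers who prefer to avoid the cell induction, an equivalent route is to note that $\chi_{K(n)}((-)(1))$ is an additive invariant of compact orbispaces by Theorem \ref{thm:Euler}, so Theorem \ref{thm:basis} supplies a homomorphism $A_{\orb}\to\mZ$ with $[B_{\gl}H]\mapsto\chi_{K(n)}(BH)=|H\bs H_{n,p}|$; evaluating it on the expansion \eqref{eq:GmodA_in_Aorb} of $[G\dbs A]$ gives the formula directly.
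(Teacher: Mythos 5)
Your proof is correct and takes essentially the same route as the paper, which simply observes that both sides of (i) are additive invariants in the finite proper $G$-CW complex $A$ and reduces to $A=G/H$ with $H$ finite, where the identity is \cite[Theorem B (Part 1)]{HKR}. Your cell-by-cell Mayer--Vietoris induction (and your alternative via the homomorphism $A_{\orb}\to\mZ$ evaluated on \eqref{eq:GmodA_in_Aorb}) is precisely that additivity claim spelled out in detail.
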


\begin{eg} \label{dihedral}
  If $G=H \ast_K L$ is an amalgamated product of finite groups, then Bass--Serre theory \cite{Serre}
  provides a one-dimensional finite model for $\underline{E}G$,
  namely a tree with two equivariant $0$-cells with stabilizers $H$ and $L$ and one equivariant $1$-cell with stabilizer $K$, see e.g., \cite[Example 4.10]{Lsurvey}.
  Proposition \ref{prop:euleralternating} (ii) thus yields
 \[ \chi_{K(n)} (BG)= \vert H \bs H_{n,p} \vert + \vert L \bs L_{n,p} \vert - \vert K \bs K_{n,p} \vert .\]
Hence by Corollary \ref{cor:euler morava quotients}, we get a formula
\begin{equation} \label{n=1amalgams} \vert H \bs H_{n,p} \vert + \vert L \bs L_{n,p} \vert - \vert K \bs K_{n,p} \vert =  \sum_{[g_1, \dots, g_n] \in G \bs G_{n,p} } \chi_{\mathbb{Q}}(BC\td{g_1, \dots, g_n}). \end{equation}
Already for $n=1$ this equation has non-trivial group theoretic meaning,
see also \cite[Example 4.1]{Adem}.
The relation between $G \bs G_{1,p}$ on the one hand and $H \bs H_{1,p}$, $L \bs L_{1,p}$
and $K \bs K_{1,p}$ on the other hand is subtle. For example, non-conjugate elements of $K$
might become conjugate in $G$.
This makes centralizers of elements in $G$ very subtle in general, see e.g., \cite[Theorem 4.5]{MKS} and \cite[Theorem 1]{KS2}. 

We write $D_8=C_4 \rtimes C_2$ for the dihedral group of order $8$.
We examine the formula \eqref{n=1amalgams} more closely for the amalgamated product
\[G=D_8 \ast_{C_4} D_8\]
at the prime $2$ and height $n=1$.
The groups $D_8$ and $C_4$ have five and four conjugacy classes of elements, respectively.
%The conjugacy classes of elements of $D_8$ are 
%\[D_8 \bs (D_8)_{1,2}=\{ \{1\}, \{a, a^3\}, \{a^2\}, \{\tau, \tau a^2\},  \{ \tau a, \tau a^3 \} \}. \]
So the left hand side of \eqref{n=1amalgams} expands to
\[2 \vert {D_8 \bs (D_8)}_{1,2} \vert - \vert C_4 \bs ({C_4})_{1,2} \vert = 2 \cdot 5 -4=6.\]
In any amalgamated product of finite groups $G=H \ast_K L$,
finite order elements are conjugate to elements in $H$ or $L$,
compare the structure theorem for amalgamated products \cite[Theorem 2]{Serre},
see also \cite[Theorem 4.6]{MKS} and \cite[Lemma 4.3 and Lemma 4.4]{GLO}.
So in our case $G=D_8 \ast_{C_4} D_8$, every finite order element has 2-power order.
The structure theorem also implies that the set $G \bs G_{1,2}$ has cardinality $2+2+3=7$,
where $2$ is the number of conjugacy classes in $D_8$ which do not belong to $C_4$,
and $3$ is the number of conjugacy classes of $C_4$ inside $D_8$.
The difference between $\vert G \bs G_{1,2} \vert =7$ and the left hand side of \eqref{n=1amalgams}
must be accounted for by centralizers.
And indeed, the centralizer of a generator of $C_4$ in $G$ is isomorphic to $C_4 \times \mathbb{Z}$,
the free summand being generated by the product of two reflections on different sides
of the amalgamated product; and all the other centralizers are rationally acyclic.
Because $\chi_{\mathbb{Q}}(B(C_4 \times \mathbb{Z}))=\chi_{\mathbb{Q}}(S^1)=0$, we deduce that
\[ \sum_{[g] \in G \bs G_{1,2} } \chi_{\mathbb{Q}}(BC\td{g})=6+ \chi_{\mathbb{Q}}(B(C_4 \times \mathbb{Z}))=6,\]
which agrees with the left hand side of \eqref{n=1amalgams}. 
\end{eg}

\section{Hopkins--Kuhn--Ravenel character theory for orbispaces} \label{sec: E theory of BG}

The purpose of this section is to establish a generalization of the
character isomorphism of Hopkins, Kuhn and Ravenel \cite[Theorem C]{HKR}
from finite groups to infinite discrete groups.
Even more generally, we will prove in Theorem \ref{thm:generalize HKR} that
a specific character homomorphism \eqref{eq:character}
for compact orbispaces is an isomorphism.
We then specialize it in Corollary \ref{cor:Characters for BG}
to global quotients $G\dbs A$ for finite proper $G$-CW complexes $A$,
and to $B_{\gl}G$ for groups $G$ with finite $\underline{E}G$.

\begin{construction}[Hopkins--Kuhn--Ravenel character map] \label{con: HKR map}
  We let $p$ be a prime number, and $n\geq 1$.
  We let $E$ denote the $n$-th Morava $E$-theory spectrum, sometimes also called the
  Lubin--Tate spectrum, at the prime $p$ for height $n$.
  Hopkins, Kuhn and Ravenel \cite[Section 1.3]{HKR} define a graded ring $L(E^*)$
  as follows. The inverse system of group epimorphisms
  \[ \dots\to (\mZ/p^{k+1})^n  \to  (\mZ/p^k)^n  \to  \dots \to (\mZ/p)^n  \to  \{1\} \]
  induces maps of classifying spaces. Applying $E^*$-cohomology yields a
  direct system of morphisms of graded-commutative $E^*$-algebras
  \[ E^* \to E^*(B(\mZ/p)^n) \to \dots \to  E^*( B(\mZ/p^k)^n) \to E^*(B(\mZ/p^{k+1})^n) \to \dots\    \]
  The colimit of this system is denoted $E^*_{\text{cont}}(B \mZ_p^n)$.
  Then $S$ denotes the subset of $E^2_{\text{cont}}(B \mZ_p^n)$ consisting of the
  first Chern classes of all 1-dimensional complex representations of
  the groups $(\mZ/p^k)^n$, or, equivalently, of all continuous homomorphisms
  $\alpha:\mZ_p^n\to U(1)$. The ring $L(E^*)$ is then defined as the localization
  \[ L(E^*)\ = \ S^{-1} E^*_{\text{cont}}(B\mZ_p^n).    \]
  
  When the orbispace $X$ is compact, we shall now define
  a natural homomorphism of graded $L(E^*)$-modules
  \begin{equation}\label{eq:character}
    \chi_{n,p}^X\ : \ L(E^*)\tensor_{E^*} E^*(X(1)) \ \to\ H^*( X\td{\mZ_p^n};L(E^*)).
  \end{equation}
  Our definition specializes to the Hopkins--Kuhn--Ravenel character map
  $\chi_{n,p}^G$ from \cite[Theorem C]{HKR}
  if we take $X=G \dbs A$ for a finite group $G$ and a finite $G$-CW-complex $A$,
  see Example \ref{eg:HKR special case}.

  Because $X$ is compact, Theorem \ref{thm:compact_finiteness} (i)
  provides an number $k$ such that the values $X(K)$ are empty for all finite groups $K$
  of order larger than $p^k$. In particular, $X(\mZ_p^n/N)$ is empty whenever $N$ is a subgroup
  of $\mZ_p^n$ of index larger than $p^k$.
  If the index of $N$ is less than or equal to $p^k$, then in particular, $(p^k\mZ_p)^n\leq N$.
  Hence $X\td{\mZ_p^n}$ can be rewritten as
  \[ X\td{\mZ_p^n}\  \iso    \coprod_{M\leq (\mZ/p^k)^n}  X( (\mZ/p^k)^n/M).  \]
  For every normal subgroup $M$ of a finite group $G$, the functoriality
  of the orbispace $X$ provides a continuous map
  \[  B G\times  X(G/M) \ \xra{B\pi\times\Id} \ B(G/M) \times  X(G/M) = \Orb(e,G/M)\times X(G/M) \ \to \  X(1).\]
  For varying subgroups of $G=(\mZ/p^k)^n$, we obtain a map
  \[ \coprod_{M\leq (\mZ/p^k)^n} B(\mZ/p^k)^n\times  X((\mZ/p^k)^n/M) \ \to \  X(1).\]
  Using functoriality for this action map and
  the K{\"u}nneth formula for $E^*$-cohomology \cite[Corollary 5.11]{HKR} we obtain a morphism of $E^*$-algebras
  \begin{align*}
    E^*(X(1))\ \to \ & 
                       E^*(\coprod_{M\leq (\mZ/p^k)^n} B(\mZ/p^k)^n\times X((\mZ/p^k)^n/M))\\
              \iso \quad & E^*( B(\mZ/p^k)^n)\tensor_{E^*} E^*( \coprod_{M\leq (\mZ/p^k)^n} X((\mZ/p^k)^n/M)) \\
    \xra{\varphi_k\tensor\Id} \  &L(E^*)\tensor_{E^*} E^*( X\td{\mZ_p^n})\
    \xra{\text{Chern character}} \ H^*( X\td{\mZ_p^n}; L(E^*)).
  \end{align*}
  Here $\varphi_k$ is the composite homomorphism of graded-commutative $E^*$-algebras
  \[ E^*( B(\mZ/p^k)^n)\ \to \ E^*_{\text{cont}}(B \mZ_p^n)\ \to\
    S^{-1} E^*_{\text{cont}}(B \mZ_p^n)\ = \ L(E^*).\]
  The Chern character is defined because $L(E^*)$ is a $(p^{-1}E^*)$-algebra
  which is rational \cite[Theorem C]{HKR}.
  The target of the previous composite is an $L(E^*)$-algebra, so scalar extension from
  $E^*$ to $L(E^*)$ yields the character map \eqref{eq:character}.
  We omit the verification that the character map just defined remains unchanged if we increase
  the number $k$, so that it is independent of the choice of $k$.
\end{construction}

\begin{eg}\label{eg:HKR special case}
  We explain how the  Hopkins--Kuhn--Ravenel character map
  \cite[Theorem C]{HKR} arises as a special case of the character map \eqref{eq:character}.
  We let $G$ be a finite group, and we let $A$ be a finite $G$-CW-complex.
  The underlying space $X(1)=(G\dbs A)(1)$ is the homotopy orbit space $EG \times_G A$;
  so the left hand side of our character map coincides with the source of
  the Hopkins--Kuhn--Ravenel character map.
  By  Theorem \ref{thm:eval for orbits}, we have
  \begin{align*}\label{eq:G A at Z_p^n}
    (G\dbs A)\td{\mZ_p^n}
    &\simeq \   \coprod_{[\alpha]\in G \bs \Hom(\mZ_p^n,G)}  E C(\alpha) \times_{C(\alpha)} A^{\Img(\alpha)} \\ 
    &\simeq EG \times_G (\coprod_{\alpha\in\Hom(\mZ_p^n,G)}  A^{\Img(\alpha)}).
 \end{align*}
Because the ring $L(E^*)$ is a $\mQ$-algebra  and the group $G$ is finite, the functor $H^*(-;L(E^*))$
  turns $G$-homotopy orbits into fixed points and we get
  \begin{align*}
    H^*((G\dbs A)\td{\mZ_p^n}&;L(E^*))\
    \iso \    H^*(  \coprod_{\alpha\in\Hom(\mZ_p^n,G)}  A^{\Img(\alpha)};L(E^*))^G \\
    &\iso \    L(E^*)\tensor_{E^*} E^*( \coprod_{\alpha\in\Hom(\mZ_p^n,G)}  A^{\Img(\alpha)})^G \\
    &= \    L(E^*)\tensor_{E^*} E^*\left(  \text{Fix}_{n,p}(G,A)\right)^G \ = \
      \Cl_{n,p}(G,A;L(E^*)).
  \end{align*}
  The second isomorphism uses that each summand $A^{\Img(\alpha)}$ is a finite
  CW-complex, and that there are only finitely many homomorphisms from $\mZ_p^n$ to $G$.
  The two equalities are definitions from \cite{HKR}.
  We conclude that for $X=G\dbs A$, also the target of our character map 
  \eqref{eq:character} coincides with the target of the Hopkins--Kuhn--Ravenel character map.
  Moreover, comparison of the definitions shows that for $X=G\dbs A$, also our map
  coincides with the one defined by Hopkins, Kuhn and Ravenel.
\end{eg}

\begin{theorem}\label{thm:generalize HKR}
  Let $p$ be a prime number, and $n\geq 0$.
  Then for every compact orbispace $X$, the character map  \eqref{eq:character}
  is an isomorphism.
\end{theorem}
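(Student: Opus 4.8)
The plan is to run the induction over the cell structure of a compact orbispace that has been used repeatedly above (e.g.\ in the proofs of Theorem \ref{thm:compact_finiteness}, Theorem \ref{thm:basis}, Theorem \ref{thm:Euler}): the class of compact orbispaces $X$ for which $\chi_{n,p}^X$ is an isomorphism contains the empty orbispace, contains $B_{\gl}G$ for every finite group $G$, and is closed under homotopy pushouts; by the definition of compactness it is then everything. The empty orbispace is trivial because the underlying space and the space $X\td{\mZ_p^n}$ are both empty, so source and target of \eqref{eq:character} vanish. The base case $X=B_{\gl}G$ for a finite group $G$ is precisely the Hopkins--Kuhn--Ravenel character isomorphism \cite[Theorem C]{HKR}: as spelled out in Example \ref{eg:HKR special case} (take $A=\ast$), our character map \eqref{eq:character} agrees with theirs and the target is identified with a ring of generalized class functions. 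So the real content is the closure under homotopy pushouts.

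For that step I would show that both sides of \eqref{eq:character}, viewed as contravariant functors on compact orbispaces, send a homotopy pushout square to a Mayer--Vietoris long exact sequence, and that $\chi_{n,p}$ is a natural transformation commuting with the connecting homomorphisms; the five lemma then does the inductive step. For the target: the functor $X\mapsto X\td{\mZ_p^n}$ preserves homotopy colimits, since colimits of orbispaces are computed objectwise and $X\td{\mZ_p^n}$ is a coproduct of evaluations of $X$, so composing with $H^*(-;L(E^*))$ produces Mayer--Vietoris sequences. For the source: $X\mapsto X(1)$ likewise preserves homotopy colimits, hence $E^*(X(1))$ has Mayer--Vietoris sequences, and these stay exact after applying $L(E^*)\tensor_{E^*}-$ because $L(E^*)$ is flat over $E^*$ --- it is faithfully flat over $p^{-1}E^*$ by \cite[Section 1.3]{HKR}, and $p^{-1}E^*$ is a localization of $E^*$. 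Naturality of $\chi_{n,p}^X$ in $X$ is built into Construction \ref{con: HKR map}: the action map $\coprod_{M}B(\mZ/p^k)^n\times X((\mZ/p^k)^n/M)\to X(1)$ together with the Künneth and Chern-character identifications only uses the orbispace functoriality of $X$, and for a morphism of compact orbispaces one chooses a single $k$ large enough to work for both.

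The hard part, I expect, will be bookkeeping rather than conceptual: one must verify carefully that the character map built with the auxiliary integer $k$ is genuinely natural, independent of the choice of $k$, and compatible with the Mayer--Vietoris boundary maps on both sides, while tracking the Künneth identification $E^*(B(\mZ/p^k)^n\times Y)\iso E^*(B(\mZ/p^k)^n)\tensor_{E^*}E^*(Y)$. This Künneth isomorphism \cite[Corollary 5.11]{HKR} applies here because $E^*(B(\mZ/p^k)^n)$ is a free $E^*$-module of finite rank: the Morava $K$-theory of $B(\mZ/p^k)^n$ is concentrated in even degrees, so Strickland's theorem \cite{StricklandMorava} makes $E^*(B(\mZ/p^k)^n)$ even and free, and having a finitely generated free module in one Künneth variable makes the Künneth map an isomorphism for an arbitrary space in the other variable. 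Once naturality and the Mayer--Vietoris compatibility are established, the inductive step is a routine application of the five lemma, completing the proof.
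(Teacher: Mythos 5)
Your proposal is correct and follows essentially the same route as the paper: the paper's proof likewise reduces to the three closure properties of the class of orbispaces for which $\chi_{n,p}^X$ is an isomorphism, citing \cite[Theorem C]{HKR} for $B_{\gl}G$ with $G$ finite and asserting that source and target of \eqref{eq:character} are cohomology theories in $X$ to handle homotopy pushouts. Your additional details (flatness of $L(E^*)$ over $E^*$, colimit-preservation of $X\mapsto X(1)$ and $X\mapsto X\td{\mZ_p^n}$, the five lemma) are exactly the content the paper leaves implicit in that one sentence.
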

\begin{proof}
  It suffices to show the following statements:
  \begin{enumerate}
  \item[(a)] The class of orbispaces for which 
    the character map  \eqref{eq:character}
  is an isomorphism contains the empty orbispace and is closed under homotopy pushouts.
  \item[(b)]  The class  of orbispaces for which
    the character map  \eqref{eq:character} is an isomorphism
    contains $B_{\gl}G$ for every finite group $G$.
  \end{enumerate}  
  Property (a) holds because source and target of the character map are
  cohomology theories in the orbispace $X$.
  Claim (b) is the special case of \cite[Theorem C]{HKR}
  for $X=\ast$.
\end{proof}

  We let $G$ be a discrete group that admits a finite $G$-CW-model for $\un{E}G$.
  Then the orbispace $X=G\dbs \un{E}G\simeq B_{\gl}G$ is compact,
  and so the character map  \eqref{eq:character} is an isomorphism.
  In this example, the underlying space is a classifying space for the group $G$,
  so the source of the character map specializes to $L(E^*)\tensor_{E^*} E^*(B G)$.
  Theorem \ref{thm:eval for orbits} provides a weak equivalence
  \begin{align*}
    (B_{\gl}G)\td{\mZ_p^n} \
    \simeq \    \coprod_{[g_1,\dots,g_n] \in G \bs G_{n,p}}  B C\td{g_1, \dots, g_n}.  
  \end{align*}
  Because $G$ admits a finite $G$-CW-model for $\un{E}G$,
  there are only finitely many conjugacy classes of $n$-tuples of pairwise commuting
  elements of $p$-power order.
  By Theorem \ref{thm:generalize HKR}, we obtain:

  \begin{cor} \label{cor:Characters for BG}
  Let $G$ be a discrete group that admits a finite $G$-CW-model for $\un{E}G$. Then the character map 
  \[ \chi_{n,p}^{B_{\gl}G}\colon L(E^*)\tensor_{E^*} E^*(B G) \ \xrightarrow{\cong}\
    \prod_{[g_1,\dots,g_n] \in G \bs G_{n,p}} H^*( B C\td{g_1, \dots, g_n}; L(E^*) ) \]
  is an isomorphism. 
\end{cor}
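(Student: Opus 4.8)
The plan is to deduce the corollary as the special case $X = B_{\gl}G$ of Theorem \ref{thm:generalize HKR}, once the source and target of the abstract character map \eqref{eq:character} have been matched with those in the statement. First I would record that, since $G$ admits a finite $G$-CW-model for $\un{E}G$, the global quotient $G\dbs\un{E}G$ is a compact orbispace, and the projection $\un{E}G\to\ast$ induces an equivalence $G\dbs\un{E}G\xra{\simeq} G\dbs\ast = B_{\gl}G$ by Proposition \ref{prop:Fin-invariance}; hence $X = B_{\gl}G$ is compact and Theorem \ref{thm:generalize HKR} applies to it.

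Next I would identify the source of \eqref{eq:character}. The underlying space is $X(1) = (G\dbs\un{E}G)(1) = EG\times_G\un{E}G$, which is weakly equivalent to $BG$ because $\un{E}G$ is non-equivariantly contractible. Thus the source $L(E^*)\tensor_{E^*} E^*(X(1))$ becomes $L(E^*)\tensor_{E^*} E^*(BG)$, matching the left-hand side of the corollary.

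Then I would identify the target using Theorem \ref{thm:eval for orbits}, which gives a weak equivalence $(B_{\gl}G)\td{\mZ_p^n}\simeq\coprod_{[g_1,\dots,g_n]\in G\bs G_{n,p}} BC\td{g_1,\dots,g_n}$. Since $B_{\gl}G$ is compact, Theorem \ref{thm:compact_finiteness}(i) forces $(B_{\gl}G)(\mZ_p^n/N)$ to be empty for all but finitely many finite-index subgroups $N\leq\mZ_p^n$, so the index set $G\bs G_{n,p}$ is finite. Consequently $H^*((B_{\gl}G)\td{\mZ_p^n};L(E^*))\iso\prod_{[g_1,\dots,g_n]\in G\bs G_{n,p}} H^*(BC\td{g_1,\dots,g_n};L(E^*))$, matching the right-hand side. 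Finally I would note that, under these identifications, the abstract map $\chi_{n,p}^{B_{\gl}G}$ of Construction \ref{con: HKR map} is exactly the displayed map $\chi_{n,p}^{B_{\gl}G}$ of the corollary, which is a direct unwinding of the construction (compare Example \ref{eg:HKR special case}). Invoking the isomorphism statement of Theorem \ref{thm:generalize HKR} then finishes the proof.

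I do not expect a genuine obstacle here: all the mathematical content is already carried by Theorem \ref{thm:generalize HKR} and Theorem \ref{thm:eval for orbits}, and the remainder is bookkeeping. The one point that deserves a word of care is the finiteness of $G\bs G_{n,p}$, which is what allows the passage from a coproduct of spaces to a product in cohomology, and which is precisely where the hypothesis of a finite model for $\un{E}G$ enters — via compactness of $B_{\gl}G$ and Theorem \ref{thm:compact_finiteness}(i).
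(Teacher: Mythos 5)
Your proposal is correct and follows essentially the same route as the paper: compactness of $B_{\gl}G\simeq G\dbs\un{E}G$, identification of the underlying space with $BG$ and of $(B_{\gl}G)\td{\mZ_p^n}$ via Theorem \ref{thm:eval for orbits}, finiteness of $G\bs G_{n,p}$, and then Theorem \ref{thm:generalize HKR}. Nothing is missing.
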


In Corollary \ref{cor:Characters for BG},
the centralizers $C\td{g_1, \dots, g_n}$ need not be finite,
and the factors on right hand side need not be free of rank 1 over $L(E^*)$.
More generally, for a discrete group $G$ and a finite proper $G$-CW complex $A$,
applying Theorem \ref{thm:generalize HKR} to the compact orbispace $G \dbs A$ 
yields a character isomorphism:
 \begin{align*}
   \chi_{n,p}^{G\dbs A}\colon L(E^*) &\otimes_{E^*} E^*(EG \times_G A) \cong \\
   &\prod_{[g_1,\dots,g_n] \in G \bs G_{n,p}} H^*(C\td{g_1, \dots, g_n} \bs A^{\td{g_1, \dots, g_n}}; L(E^*))
 \end{align*}

\begin{remark} \label{remark: alternative proof 2}
  Also Corollary \ref{cor:Characters for BG} can be proved without reference to orbispaces.
  We give a sketch here: For a discrete group $G$ with a finite model for $\un{E}G$, consider the cohomology theory $L(E^*) \otimes_{E^*} E^*(EG \times_G A)$, where $A$ is a finite proper $G$-CW complex. We decompose it using \cite[Theorem 5.5 and Example 5.6]{LChern}: For a finite $G$-CW complex $A$, one has a natural splitting 
\begin{align} \label{alternative proof 3} L(E^*) \otimes_{E^*} E^*(EG \times_G A)\cong \prod_{(H)} \Hom_{\mathbb{Q}[W(H)]}(H_*(C(H) \bs A^H; \mathbb{Q}), T^{E^*}_H),\end{align}
where $(H)$ runs over the conjugacy classes of finite subgroups, $C(H)$ is the centralizer of $H$ in $G$, and $W(H)=N(H)/H \cdot C(H)$, where $N(H)$ is the normalizer. The term $T_H^{E^*}$ is defined as the kernel of the product of restriction maps
\[\res : L(E^*) \otimes_{E^*} E^*(BH) \to \prod_{K \leq H,\;K \neq H} L(E^*) \otimes_{E^*} E^*(BK) . \]
The right hand side of \eqref{alternative proof 3} has an appropriate grading which we do not explain here.
Using \cite[Theorem A and Theorem C]{HKR}, one can see that $T^{E^*}_H$ is trivial unless $H$ is an abelian $p$-group. In the latter case it is isomorphic as a $\mathbb{Q}[W(H)]$-module  to
\[  \bigoplus_{\tiny \begin{aligned}(h_1, \dots, h_n) \in H, \\ \td{h_1, \dots h_n}=H\end{aligned}} L(E^*). \]
Using this, with some more algebraic manipulations one can identify the right hand side of
\eqref{alternative proof 3} with
\[\prod_{[g_1,\dots,g_n] \in G \bs G_{n,p}} H^*(C\td{g_1, \dots, g_n} \bs A^{\td{g_1, \dots, g_n}}; L(E^*)).\]
We do not go into more technicalities here but invite those readers who prefer this approach to work out the details. \end{remark}

\section{Connections to orbifold Euler characteristic} \label{sec: Morava orbi char}

The goal of this section is to relate the orbifold Euler characteristic to the Morava $K$-theory Euler characteristic.
To this end, we introduce the {\em formal loop space} $\Lc X$ of an orbispace $X$
in Construction \ref{con:shift}.
We show in Theorem \ref{thm:Euler=Euler} that the formal loop space of a compact orbispace
is again compact, and the orbispace Euler characteristic of $\Lc X$
equals the rational Euler characteristic of the underlying space of $X$.
In Theorem \ref{thm:chi_K(n)_chi_orb} we apply this general relation
to global classifying spaces of discrete groups with finite model for
$\underline{E}G$, and deduce a formula for the $K(n)$-Euler characteristic
of $BG$ in terms of orbifold Euler characteristics of centralizers of
specific $(n+1)$-tuples of commuting finite order elements.
In Remark \ref{rk:chromatic sequence} we explain how these relationships,
for groups which only have $p$-primary torsion,
make $\chi_{\orb}$ behave like a chromatic height $-1$ invariant,
as opposed to $\chi_{\mQ}$ and $\chi_{K(n)}$, which are chromatic height $0$ and $n$ invariants,
respectively. 

\begin{construction}[Epi-mono factorization]
  Every group homomorphism factors uniquely as the projection to a quotient group,
  followed by a monomorphism. This epi-mono factorization extends to the global indexing
  category, as we recall now.
  We let $N$ be a normal subgroup of a discrete group $K$.
  Then for every group $G$, precomposition with the projection $\pi_N : K\to K/N$
  induces an injective map $\pi^*_N: \Mono(K/N,G)\to\Hom(K,G)$. As $N$ varies over all
  normal subgroups of $K$, these maps form a bijection
  \[\coprod_{N\triangleleft K} \Mono(K/N,G)\xra{\iso}\Hom(K,G) \]
  that is moreover equivariant for the conjugation action of $G$ on both sides.
  So taking $G$-homotopy orbits yields a homeomorphism
  \begin{align}\label{eq:epimono homeo}
    \coprod_{N\triangleleft K} \Orb(K/N,G)&= \coprod_{N\triangleleft K} E G\times_G \Mono(K/N,G)
                                               \nonumber\\
    &\xra{\ \iso\ } E G\times_G \Hom(K,G) = \Glo(K,G). 
  \end{align}
  By abuse of terminology, we shall also refer to this homeomorphism as the
  {\em epi-mono factorization}.
\end{construction}

\begin{construction}[Formal loop space] \label{con:shift}
  We define the {\em formal loop space} $\Lc Y$ of an orbispace $Y$.
  The name is justified by the connection to the formal loop spaces in the
  sense of Lurie  \cite[Construction 3.4.3]{Elliptic}, see Remark \ref{rk:relation2Lurie},
  and because for every finite group $G$, the underlying space of
  $\Lc(B_{\gl} G)$ has the homotopy type of the free loop space of $B G$,
  see Theorem \ref{thm:shift of orbits}.
  We alert the reader that $\Lc Y$ is {\em different} from the pointwise free loop space.
  For example, the formal loop space construction preserves colimits
  (both 1-categorically and $\infty$-categorically), while
  the pointwise free loop space does not.
  In general, the underlying space of $\Lc Y$ is typically not equivalent
  to the free loop space of $Y(1)$.
  
  We define the value of the formal loop space at a finite group $K$ by
  \[ (\Lc Y)(K)=\ \coprod_{\tiny{\begin{aligned}N\triangleleft K\times \mZ \;\text{f.i.} \\ N \cap (K \times 0)=1\end{aligned}}}\ Y( (K\times \mZ) /N),\]  
  The coproduct is indexed by finite index normal subgroups $N$ of $K\times \mZ$
  that intersect $K\times0$ in the trivial group.

  To define the functoriality of $\Lc Y$ in the finite group,
  we fix a finite index normal subgroup $M$ of $G\times \mZ$ with $M\cap(G\times0)=1$.
  Product with the group $\mZ$ and postcomposition with the projection
  $\pi_M:G\times\mZ\to (G\times\mZ)/M$ pass to continuous maps on morphism spaces.
  Together with the epi-mono factorization \eqref{eq:epimono homeo}, this yields a composite:
  \begin{align}\label{eq:timesZfactor}
    \Orb(K, G) &\xra{-\times\mZ} \Orb(K\times\mZ, G\times\mZ)\nonumber\\
    \xra{(\pi_M)_*} &\Glo(K\times\mZ, (G\times\mZ)/M)
                      \iso \coprod_{N\triangleleft K\times \mZ}\Orb((K\times\mZ)/N,(G\times\mZ)/M)
  \end{align}
  Because  $M\cap(G\times0)=1$,
  the image of this composite is contained in the union of those
  summands indexed by normal subgroups $N$ of $K\times\mZ$
  that satisfy $N\cap (K\times0)=1$.
  Moreover, because $M$ has finite index in $G\times \mZ$,
  the maps lands in those summands such that $N$ has finite index in $K\times\mZ$.
  We can thus define the partial functoriality on the summand indexed by $M$ as the composite
  \begin{align*}
    Y( (G\times \mZ) /M)
    &\times \Orb(K, G)\xra{\eqref{eq:timesZfactor}}\\
    &\coprod_{\tiny{\begin{aligned}N\triangleleft  K\times \mZ \;\text{f.i.} \\ N \cap (K \times 0)=1\end{aligned}}}\ Y( (G\times \mZ) /M)\times \Orb((K\times\mZ)/N,(G\times\mZ)/M)\\
    &\xra{\ \circ\ } \coprod_{\tiny{\begin{aligned}N\triangleleft  K\times \mZ \;\text{f.i.} \\ N \cap (K \times 0)=1\end{aligned}}}\ Y( (K\times \mZ) /N)
      =(\Lc Y)(K).
  \end{align*}
  As $M$ runs over all those subgroups $M$ of $G\times \mZ$ that index the sum
  defining $(\Lc Y)(G)$, these maps assemble into the desired functoriality 
  \[  (\Lc Y)(G) \times \Orb(K, G) \to  (\Lc Y)(K). \]
  The fact that these maps are also associative boils down to associativity
  of the epi-mono factorization \eqref{eq:epimono homeo}.
\end{construction}
  
\begin{remark}\label{rk:relation2Lurie}
  The formal loop orbispace $\Lc Y$ is an adaptation of 
  the formal loop space construction defined by Lurie in \cite[Construction 3.4.3]{Elliptic}
  to our context, the differences being that
  Lurie works in the $\infty$-category of global spaces indexed on
  finite abelian groups, whereas we work in a 1-categorical model
  for the $\infty$-category of orbispaces indexed on all finite groups.
  By definition, the category $\Orb_{\Fin}$ is a wide but non-full subcategory
  of $\Glo_{\Fin}$, the full subcategory of $\Glo$ spanned by the finite groups,
  compare Construction \ref{con:Orb}.
  So restriction of functors from $\Glo_{\Fin}$ to $\Orb_{\Fin}$
  and its left adjoint, continuous left Kan extension,
  form an adjoint functor pair
  \[\xymatrix@C=15mm{  orbspc = {\bT}_{\Orb} \ar[r]<0.5ex>^-{(-)_{\Glo}} & {\bT}_{\Glo} \ar[l]<0.5ex>^-{U}}. \]
  Here ${\bT}_{\Glo}$ denotes the $1$-category of global spaces,
  i.e., continuous functors from $\Glo_{\Fin}^{\op}$ to spaces.
  Since the right adjoint $U$ forgets the functoriality in non-injective group homomorphisms,
  the left adjoint $(-)_{\Glo}$ can be thought of as `freely adding inflations',
  i.e., restriction along surjective group homomorphisms.
  For more details we recommend Rezk's preprint \cite{rezk1}.
  
  For any global space $Z$ and torsion abelian group $\Lambda$,
  Lurie defines the formal loop space $L^{\Lambda} Z$
  \cite[Construction 3.4.3]{Elliptic}.
  In the special case $\Lambda=\mQ/\mZ$, 
  the construction can be rewritten as
  \[ (L^{\mQ/\mZ} Z)(G)=\colim_{n\in\mN_{>0}} Z(G \times \mZ/n),\]
  where the colimit is over the poset, under divisibility, of positive natural numbers,
  by inflation along epimorphisms $\mZ/k n\to \mZ/n$.
  The connection to the formal loop space of Construction \ref{con:shift} 
  is that the formal loop space $L^{\mQ/\mZ}(Y_{\Glo})$ of the globalization
  of an orbispace $Y$
  is naturally equivalent to the globalization  of the formal loop orbispace $\Lc Y$.
  \end{remark}

Next, we give an explicit formula for the formal loop space
of a global quotient orbispace.
This is a central technical result which will allow
us to compute various Euler characteristics in terms of centralizers.
We write $G_f$ for the set of finite order elements in $G$;
the group $G$ acts on $G_f$ by conjugation.
We let $C\td{g}$ denote the centralizer of an element $g\in G$.
We recall the folklore fact that the free loop space of the classifying space
of a discrete group is equivalent to the disjoint union, over conjugacy classes of elements,
of the classifying spaces of the centralizers.
So the following theorem in particular shows that for finite groups $G$,
the underlying space of $\Lc(B_{\gl} G)$
has the homotopy type of the free loop space of $B G$, whence the name.

\begin{theorem} \label{thm:shift of orbits}
  Let $G$ be a discrete group and $A$ a $G$-space.
  Then there is a natural equivalence of orbispaces
  \[  \coprod_{[g] \in G \bs G_f} C\td{g}\dbs A^{\td{g}}\ \xra{\ \simeq \ } \ \Lc(G\dbs A). \]
  In particular, there is an equivalence of orbispaces
  \[  \coprod_{[g] \in G \bs G_f} B_{\gl }C\td{g}\ \xra{\ \simeq \ } \ \Lc(B_{\gl}G). \]
\end{theorem}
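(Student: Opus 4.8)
The plan is to identify the formal loop orbispace $\Lc(G\dbs A)$ with a single global quotient orbispace, and then to reduce the statement to the induction equivalence of Example \ref{eg:induction_formula}. To this end I would introduce the $G$-space
\[ \widetilde A\ =\ \coprod_{g\in G_f} A^{\td g}, \qquad h\cdot (a,g)\ =\ (ha,\, hgh^{-1}), \]
and aim to produce a natural isomorphism of orbispaces $\Lc(G\dbs A)\cong G\dbs\widetilde A$. First I would check this objectwise. Fix a finite group $K$. By Construction \ref{con:shift} the space $(\Lc(G\dbs A))(K)$ is the coproduct of the spaces $(G\dbs A)((K\times\mZ)/N)$ over the finite-index normal subgroups $N$ of $K\times\mZ$ with $N\cap(K\times 0)=1$; since the homotopy-orbit functor $EG\times_G(-)$ commutes with disjoint unions, this coproduct is $EG\times_G$ applied to $\coprod_{N}\coprod_{\bar\alpha}A^{\Img(\bar\alpha)}$, the inner coproduct running over monomorphisms $\bar\alpha\colon(K\times\mZ)/N\to G$. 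The (set-level) epi-mono factorization underlying \eqref{eq:epimono homeo} identifies the indexing set of this coproduct, $G$-equivariantly for conjugation, with the set of homomorphisms $\beta\colon K\times\mZ\to G$ whose restriction to $K\times 0$ is injective and whose image is finite, and $A^{\Img(\bar\alpha)}=A^{\Img(\beta)}$ under this identification. Writing such a $\beta$ as the pair $(\beta|_{K\times 0},\beta(0,1))=(\alpha,g)$ exhibits it as a monomorphism $\alpha\colon K\to G$ together with a finite-order element $g$ of the centralizer $C(\Img\alpha)$, and then $A^{\Img(\beta)}=A^{\td g}\cap A^{\Img\alpha}=(A^{\td g})^{\Img\alpha}$. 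For fixed $\alpha$, letting $g$ range over the finite-order elements of $C(\Img\alpha)$ recovers exactly the fixed-point space $\widetilde A^{\,\Img\alpha}$. Altogether $(\Lc(G\dbs A))(K)=EG\times_G\bigl(\coprod_{\alpha\in\Mono(K,G)}\widetilde A^{\,\Img\alpha}\bigr)=(G\dbs\widetilde A)(K)$.

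The naturality of this identification in the finite group $K$ is where the bookkeeping lies, and I expect it to be the main obstacle. The $\Orb$-functoriality of $\Lc(G\dbs A)$ as defined in Construction \ref{con:shift} is assembled from ``product with $\mZ$'', post-composition with projections $G\times\mZ\to(G\times\mZ)/M$, and the epi-mono factorization, whereas the $\Orb$-functoriality of $G\dbs\widetilde A$ coming from Example \ref{example: global quotient} is assembled from inclusions of fixed-point spaces and conjugation maps. The task is a diagram chase showing that, under the objectwise homeomorphisms above — which themselves use the epi-mono factorization — these two recipes agree; the point to be careful about is that the epi-mono factorization intertwining the ``product with $\mZ$'' construction in Construction \ref{con:shift} is literally the same bijection used to rewrite the indexing coproducts as sets of homomorphisms $K\times\mZ\to G$. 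Since every map in sight is an isomorphism objectwise, no homotopy-theoretic issue intervenes, and the resulting natural transformation is automatically an equivalence of orbispaces.

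Granting $\Lc(G\dbs A)\cong G\dbs\widetilde A$, the remainder is formal. Choosing representatives of the conjugacy classes of finite-order elements yields a $G$-equivariant decomposition $\widetilde A\cong\coprod_{[g]\in G\bs G_f}G\times_{C\td g}A^{\td g}$, in which $C\td g$ acts on $A^{\td g}$ by restriction — it preserves $A^{\td g}$ precisely because it centralizes $g$. Since $G\dbs(-)$ carries coproducts of $G$-spaces to coproducts of orbispaces, this gives $G\dbs\widetilde A\cong\coprod_{[g]}G\dbs(G\times_{C\td g}A^{\td g})$. Finally the induction equivalence of Example \ref{eg:induction_formula}, applied with subgroup $C\td g$ and the $C\td g$-space $A^{\td g}$, provides an equivalence $C\td g\dbs A^{\td g}\xra{\simeq}G\dbs(G\times_{C\td g}A^{\td g})$ for each $[g]$; taking the coproduct of these over $[g]$ and composing with the two identifications above produces the asserted equivalence $\coprod_{[g]\in G\bs G_f}C\td g\dbs A^{\td g}\xra{\simeq}\Lc(G\dbs A)$. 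The displayed special case is obtained by specializing to $A=\ast$: then $A^{\td g}=\ast$ for every $g$, so $C\td g\dbs\ast=B_{\gl}C\td g$ and $G\dbs\ast=B_{\gl}G$, and the equivalence becomes $\coprod_{[g]\in G\bs G_f}B_{\gl}C\td g\xra{\simeq}\Lc(B_{\gl}G)$.
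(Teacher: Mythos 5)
Your proposal is correct and follows essentially the same route as the paper: both identify $\Lc(G\dbs A)$ objectwise with the global quotient $G\dbs\bigl(\coprod_{g\in G_f}A^{\td g}\bigr)$ via the correspondence between pairs (monomorphism $\alpha\colon K\to G$, finite-order $g\in C(\Img\alpha)$) and homomorphisms $K\times\mZ\to G$ with finite image and injective restriction to $K\times 0$, then conclude by the conjugacy-class decomposition and the induction equivalence of Example \ref{eg:induction_formula}. The paper likewise leaves the naturality-in-$K$ check unverified, so your flagging it as the remaining bookkeeping matches the published argument.
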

\begin{proof}
  The group $G$ acts on the space $\coprod_{g\in G_f}  A^{\td{g}}$
  by $\gamma\cdot(g,a)=(\gamma g\gamma^{-1},\gamma a)$.
  We claim that  $\Lc(G\dbs A)$ is isomorphic to the global quotient orbispace
  \[G\dbs\bigg(  \coprod_{g\in G_f}  A^{\td{g}}\bigg). \]
  To prove that, we evaluate at a finite group $K$.
  By definition, the value of the above global quotient
  at $K$ is the homotopy orbit space of the $G$-space
  \[ \coprod_{\beta\in \Mono(K,G)} \bigg( \coprod_{g\in G_f}  A^{\td{g}}\bigg)^{\Img(\beta)} .\]
  For a given monomorphism $\beta:K\to G$, an element $(g,a)$ of the inner disjoint
  union is fixed by $\Img(\beta)$ if and only if
  the relation $(\beta(k) g\beta(k)^{-1},\beta(k)a)=(g,a)$
  holds for all $k\in K$. This is equivalent to the conditions that $\Img(\beta)$ centralizes $g$, and that $a$ is also fixed by $\Img(\beta)$.
  Such pairs combine into homomorphisms 
  \[  K\times \mZ\to G\ ,\quad (k,m)\mapsto \beta(k)\cdot g^m\]
  from the product to $G$.
  So the previous $G$-space equals the $G$-space
  \[ \coprod_{\tiny{ \begin{aligned}&\gamma\in \Hom^{\text{f.img.}}(K\times \mZ,G) \\ &\quad \quad \quad \gamma|_K\;\;\text{monic} \end{aligned}}}  A^{\Img \gamma}; \]
  the disjoint union is taken over homomorphisms
  $\gamma:K\times \mZ\to G$ with finite image, and whose restriction
  to $K \times 0$ is injective.
  Further, if $\gamma$ is such a homomorphism,
  then $N=\ker(\gamma)$ is a finite index normal subgroup whose intersection
  with $K \times 0$ is trivial, i.e., it is one of the subgroups that index the
  disjoint union in the definition of $\Lc(G\dbs A)(K)$.
  We decompose the disjoint union according to the kernels of the homomorphism as 
  \[    \coprod_{\tiny{\begin{aligned}N\triangleleft K\times \mZ \;\text{f.i.} \\ N \cap (K \times 0)=1\end{aligned}}}
  \coprod_{\alpha\in\Mono( (K\times \mZ)/N,G)} A^{\Img\alpha}  ; \]
in the inner disjoint union, $\alpha$ is the unique homomorphism
whose composite with the projection $K\times \mZ\to(K\times \mZ)/N$ is $\gamma$.
Homotopy orbits commute with disjoint unions.
So the value of the orbispace $G\dbs\big(  \coprod_{g\in G_f}  A^{\td{g}}\big)$ at $K$ is
\begin{align*}
      \coprod_{\tiny{\begin{aligned}N\triangleleft K\times \mZ \;\text{f.i.} \\ N \cap (K \times 0)=1\end{aligned}}}
  &E G\times_G\bigg(  \coprod_{\alpha\in\Mono( (K\times \mZ)/N,G)} A^{\Img\alpha} \bigg) \\
  &= \coprod_{\tiny{\begin{aligned}N\triangleleft K\times \mZ \;\text{f.i.} \\ N \cap (K \times 0)=1\end{aligned}}}
    (G\dbs A)((K\times \mZ)/N)
  = \Lc(G\dbs A)(K).
\end{align*}
We omit the verification that the pointwise homeomorphisms are natural in the group $K$.
Now we choose representatives of the $G$-conjugacy classes of finite order elements of $G$.
These provide a $G$-equivariant decomposition
\[ \coprod_{g\in G_f}  A^{\td{g}}\ = \
\coprod_{[g]\in G\bs G_f}  G\times_{C\td{g}}A^{\td{g}}.   \]
The global quotient functor $G\dbs-$ preserves disjoint unions,
and the orbispaces $G\dbs(G\times_{C\td{g}}A^{\td{g}})$ and 
$C\td{g}\dbs A^{\td{g}}$ are equivalent, see Example \ref{eg:induction_formula}.
This proves the theorem.
\end{proof}

\begin{theorem}\label{thm:Euler=Euler}
  For every compact orbispace $X$,
  the formal loop orbispace $\Lc X$ is compact, and the relation
  \[\  \chi_{\orb}[ \Lc X ] \ =\ \chi_\mQ(X(1)) \]
 between Euler characteristics holds.
\end{theorem}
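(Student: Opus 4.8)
The plan is to run the by-now familiar induction over the building blocks of a compact orbispace, exactly as in the proofs of Theorem~\ref{thm:basis} and Theorem~\ref{thm:Euler}. Let $\mathcal{C}$ be the class of compact orbispaces $X$ for which both conclusions hold: $\Lc X$ is compact, and $\chi_{\orb}[\Lc X]=\chi_\mQ(X(1))$. Since the class of compact orbispaces is, by definition, the smallest class containing the empty orbispace and the $B_{\gl}G$ for $G$ finite and closed under homotopy pushouts, it suffices to show that $\mathcal{C}$ contains these generators and is closed under homotopy pushouts.

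First I would note that $\Lc$ preserves colimits (Construction~\ref{con:shift}), so $\Lc\emptyset=\emptyset$ is compact, and both sides of the asserted identity vanish on $\emptyset$. Next, for $X=B_{\gl}G$ with $G$ finite, Theorem~\ref{thm:shift of orbits} supplies an equivalence
\[ \Lc(B_{\gl}G)\ \simeq\ \coprod_{[g]\in G\bs G_f} B_{\gl}C\td{g}, \]
which, $G$ being finite, is a \emph{finite} disjoint union of global classifying spaces of finite groups and hence compact. Applying $\chi_{\orb}$ and Definition~\ref{def:orb_chi}, and using that $G_f=G$ together with the orbit--stabilizer relation $|C\td{g}|\cdot|\{\,xgx^{-1}:x\in G\,\}|=|G|$ and the partition of $G$ into conjugacy classes,
\[ \chi_{\orb}[\Lc(B_{\gl}G)]\ =\ \sum_{[g]\in G\bs G_f}\frac{1}{|C\td{g}|}\ =\ \frac{1}{|G|}\sum_{[g]\in G\bs G_f}\bigl|\{\,xgx^{-1}:x\in G\,\}\bigr|\ =\ \frac{|G|}{|G|}\ =\ 1. \]
On the other hand $\chi_\mQ((B_{\gl}G)(1))=\chi_\mQ(BG)=1$ because $BG$ is rationally acyclic for finite $G$. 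Hence $B_{\gl}G\in\mathcal{C}$. For closure under homotopy pushouts, let $W$ be a homotopy pushout of $X$ and $Y$ along $Z$, with $X,Y,Z\in\mathcal{C}$. Since $\Lc$ preserves homotopy pushouts (Construction~\ref{con:shift}), $\Lc W$ is a homotopy pushout of $\Lc X$ and $\Lc Y$ along $\Lc Z$; in particular $\Lc W$ is compact, and the pushout relation~\eqref{eq:pushout_relation} in $A_{\orb}$ together with the homomorphism $\chi_{\orb}\colon A_{\orb}\to\mQ$ gives $\chi_{\orb}[\Lc W]=\chi_{\orb}[\Lc X]+\chi_{\orb}[\Lc Y]-\chi_{\orb}[\Lc Z]$. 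Meanwhile, evaluation at the trivial group preserves homotopy pushouts, so $W(1)$ is a homotopy pushout of $X(1)$, $Y(1)$ and $Z(1)$, and additivity of the rational Euler characteristic for homotopy pushouts of spaces yields $\chi_\mQ(W(1))=\chi_\mQ(X(1))+\chi_\mQ(Y(1))-\chi_\mQ(Z(1))$. Comparing, $W\in\mathcal{C}$, and the induction is complete.

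The one point deserving care is the interaction of the formal loop construction with homotopy pushouts, since everything else is bookkeeping. Concretely, $(\Lc Y)(K)$ is a disjoint union of values of $Y$, so $\Lc$ preserves objectwise weak equivalences and objectwise (homotopy) pushouts, and therefore carries a double-mapping-cylinder presentation of a homotopy pushout to one again; this is the content of the colimit-preservation assertion in Construction~\ref{con:shift}, which I would invoke rather than reprove. As a sanity check, specializing to $X=B_{\gl}G$ for a group $G$ with a finite model for $\un{E}G$ recovers the identity $\chi_\mQ(BG)=\sum_{[g]\in G\bs G_f}\chi_{\orb}[B_{\gl}C\td{g}]$, the ``orbifold versus rational'' counterpart of the first formula in Theorem~\ref{thm:Euler}.
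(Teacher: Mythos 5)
Your proposal is correct and follows essentially the same route as the paper: verify the statement for $B_{\gl}G$ with $G$ finite via Theorem~\ref{thm:shift of orbits} and the class equation, observe that $\Lc$ preserves colimits so that $\Lc X$ is compact, and conclude by additivity of both sides under homotopy pushouts. The only cosmetic difference is that you spell out the bookkeeping (the empty orbispace, the Mayer--Vietoris-type additivity) that the paper compresses into the phrase ``both sides are additive invariants.''
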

\begin{proof}
  Theorem \ref{thm:shift of orbits} shows that  for every finite group $G$,
  the orbispace $\Lc(B_{\gl}G)$ is compact and
  \[ \chi_{\orb}[\Lc(B_{\gl}G)]  = 
    \sum_{[g]\in G\bs G_f} \chi_{\orb}[B_{\gl} C\td{g}] =  
    \sum_{[g]\in G\bs G_f} \frac{1}{|C_G(g)|}  =  1 =  \chi_{\mQ}(B G).\]
  So the theorem holds for global classifying spaces of finite groups.
  The functor $\Lc$ preserves colimits, so $\Lc X$ is compact whenever $X$ is.
  Both sides of the desired equation are additive invariants in $X$,
  so the formula holds in general.
\end{proof}

\begin{eg} \label{eg:Brown theorem}
  Theorem \ref{thm:Euler=Euler} generalizes a result of Brown \cite[Theorem 6.2]{KBro2}.
  We let $G$ be a discrete group that admits a finite $G$-CW-model for $\un{E}G$.
  Then by Lemma \ref{lemma: finite model for centralizers},
  for every finite order element $g\in G$, the space
  $(\un{E} G)^{\td{g}}$ is a finite $C\td{g}$-CW-model for
  the universal space for proper actions of the centralizer $C\td{g}$;
  in particular, $B_{\gl} G=G\dbs \un{E}G$ and $B_{\gl}C\td{g}$ are compact orbispaces.
  Using Theorem \ref{thm:Euler=Euler} and Theorem \ref{thm:shift of orbits}, we conclude that
  \[ \chi_\mQ(B G)\ = \ 
    \chi_{\orb}[\Lc(B_{\gl}G)]
    \ = \  \sum_{[g]\in G\bs G_f} \chi_{\orb}[B_{\gl} C\td{g}]. \]
  This recovers the above mentioned result of Brown.
\end{eg}

In the next lemma and afterwards, we shall use a new piece of notation.
Let $G$ be a discrete group, let $p$ be a prime, and let $n \geq 1$.
We write $G_{n,p,+1}$ for the set of $(n+1)$-tuples $(x, g_1,\dots, g_n)$ of commuting elements,
with $g_1,\dots,g_n$ of $p$-power order, and $x$ a general finite order element. 
Such tuples are in bijective correspondence with group homomorphism
$\mZ\times \mZ_p^n\to G$ with finite image.

\begin{theorem}\label{thm:chi_K(n)_chi_orb}
  Let $G$ be a discrete group, and let $A$ be a finite proper $G$-CW complex.
  Then for every $n \geq 1$,
  \[\chi_{K(n)}(EG \times_G A)=\sum_{[x, g_1,\dots,g_n] \in G \bs G_{n,p,+1}} \chi_{\orb}[C\td{x,g_1,\dots,g_n}\dbs A^{\td{x,g_1,\dots,g_n}}].\]
  In particular, if $G$ has a finite model for $\underline{E}G$, then
  \[ \chi_{K(n)}(BG) = \sum_{[x, g_1,\dots,g_n] \in G \bs G_{n,p,+1}} \chi_{\orb}[B_{\gl}C\td{x,g_1,\dots,g_n}]. \]
\end{theorem}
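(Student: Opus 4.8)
The plan is to chain together Theorem~\ref{thm:Euler}, Theorem~\ref{thm:eval for orbits}, Theorem~\ref{thm:Euler=Euler} and Theorem~\ref{thm:shift of orbits}, applied first to the compact orbispace $G\dbs A$ and then, iteratively, to the centralizer orbispaces that arise. No new homotopy-theoretic input is needed; the statement is an assembly of structural results already established.

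First I would set $Y=G\dbs A$, which is a compact orbispace since $A$ is a finite proper $G$-CW complex, and recall that its underlying space $Y(1)$ is (equivalent to) $EG\times_G A$. Theorem~\ref{thm:Euler} then gives $\chi_{K(n)}(EG\times_G A)=\chi_{\mQ}(Y\td{\mZ_p^n})$. By Theorem~\ref{thm:eval for orbits}, the space $Y\td{\mZ_p^n}$ is equivalent to the disjoint union, over $[g_1,\dots,g_n]\in G\bs G_{n,p}$, of the spaces $EC\td{g_1,\dots,g_n}\times_{C\td{g_1,\dots,g_n}}A^{\td{g_1,\dots,g_n}}$; by Theorem~\ref{thm:Euler}(i) this space has finite total rational homology, so only finitely many of these summands are non-empty and $\chi_{\mQ}(Y\td{\mZ_p^n})$ is the finite sum of the rational Euler characteristics of the summands. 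By Lemma~\ref{lemma: finite model for centralizers}, the space $A^{\td{g_1,\dots,g_n}}$ is a finite proper $C\td{g_1,\dots,g_n}$-CW complex, so the summand indexed by $[g_1,\dots,g_n]$ is the underlying space of the compact orbispace $C\td{g_1,\dots,g_n}\dbs A^{\td{g_1,\dots,g_n}}$.

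Next I would apply Theorem~\ref{thm:Euler=Euler} and then Theorem~\ref{thm:shift of orbits} to this orbispace; using the elementary identities $C_{C\td{g_1,\dots,g_n}}\td{x}=C\td{x,g_1,\dots,g_n}$ (the centralizer in $G$ of the subgroup generated by $x,g_1,\dots,g_n$) and $(A^{\td{g_1,\dots,g_n}})^{\td{x}}=A^{\td{x,g_1,\dots,g_n}}$, this gives
\[ \chi_{\mQ}\bigl(EC\td{g_1,\dots,g_n}\times_{C\td{g_1,\dots,g_n}}A^{\td{g_1,\dots,g_n}}\bigr)\ =\ \sum_{[x]}\chi_{\orb}\bigl[C\td{x,g_1,\dots,g_n}\dbs A^{\td{x,g_1,\dots,g_n}}\bigr], \]
where $[x]$ runs over the conjugacy classes, under conjugation by $C\td{g_1,\dots,g_n}$, of the finite order elements of $C\td{g_1,\dots,g_n}$ (again only finitely many summands being non-empty). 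Substituting this into the previous two formulas produces a double sum whose index set is the set of pairs $([g_1,\dots,g_n],[x])$, with $(g_1,\dots,g_n)$ ranging over chosen representatives of the $G$-conjugacy classes in $G_{n,p}$ and $[x]$ over the $C\td{g_1,\dots,g_n}$-conjugacy classes of finite order elements of $C\td{g_1,\dots,g_n}$. The assignment $([g_1,\dots,g_n],[x])\mapsto[x,g_1,\dots,g_n]$ is a bijection from this set onto $G\bs G_{n,p,+1}$: it is well defined since $c\cdot(x,g_1,\dots,g_n)\cdot c^{-1}=(cxc^{-1},g_1,\dots,g_n)$ for $c\in C\td{g_1,\dots,g_n}$, it is surjective because the $(g_1,\dots,g_n)$-part of any tuple in $G_{n,p,+1}$ can be $G$-conjugated to the chosen representative, and it is injective because two chosen representatives of $G$-conjugate classes in $G_{n,p}$ must coincide, which forces the conjugating element to lie in $C\td{g_1,\dots,g_n}$. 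This is the same style of orbit-counting argument already used in the proof of Theorem~\ref{thm:Euler}. Transporting the double sum along this bijection yields exactly $\sum_{[x,g_1,\dots,g_n]\in G\bs G_{n,p,+1}}\chi_{\orb}[C\td{x,g_1,\dots,g_n}\dbs A^{\td{x,g_1,\dots,g_n}}]$, which is the asserted identity.

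Finally, for the ``in particular'' statement I would specialize to $A=\un{E}G$, so that $EG\times_G\un{E}G$ is a model for $BG$ (as in Corollary~\ref{cor:euler morava quotients}); by Lemma~\ref{lemma: finite model for centralizers}, the space $(\un{E}G)^{\td{x,g_1,\dots,g_n}}$ is a finite $C\td{x,g_1,\dots,g_n}$-CW model for $\un{E}C\td{x,g_1,\dots,g_n}$, so Proposition~\ref{prop:Fin-invariance} gives $C\td{x,g_1,\dots,g_n}\dbs(\un{E}G)^{\td{x,g_1,\dots,g_n}}\simeq B_{\gl}C\td{x,g_1,\dots,g_n}$, and the general formula specializes as claimed. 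The whole argument is a composition of earlier results; the only step needing genuine (though routine) verification is the reindexing bijection together with the centralizer-of-a-centralizer identity, so I do not expect a serious obstacle.
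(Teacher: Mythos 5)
Your proposal is correct and follows essentially the same route as the paper: the first two steps reproduce Corollary \ref{cor:euler morava quotients} (which the paper simply cites), and the rest — applying Theorem \ref{thm:Euler=Euler} and Theorem \ref{thm:shift of orbits} to each centralizer orbispace and reindexing the double sum via the tautological bijection onto $G\bs G_{n,p,+1}$ — is exactly the paper's argument. The only difference is that you spell out the reindexing bijection and the specialization to $A=\un{E}G$ in slightly more detail.
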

\begin{proof}
  For each tuple $(g_1,\dots,g_n)$ in $G_{n,p}$,
  Theorem \ref{thm:shift of orbits} provides a decomposition
  \begin{align*}
   \Lc (C\td{g_1, \dots, g_n} \dbs A^{\td{g_1, \dots, g_n}})
    &\simeq \coprod_{[x] \in C\td{g_1, \dots, g_n}^{\con}_f } C_{C\td{g_1, \dots, g_n}}\td{x}\dbs(A^{\td{g_1, \dots, g_n}})^{\td{x}}\\
    &= \coprod_{[x] \in C\td{g_1, \dots, g_n}^{\con}_f } C\td{x,g_1,\dots,g_n}\dbs A^{\td{x,g_1,\dots,g_n}},
  \end{align*}
  where  $C\td{g_1, \dots, g_n}^{\con}_f$ denotes the set $C\td{g_1, \dots, g_n} \bs C\td{g_1, \dots, g_n}_f$ of conjugacy classes of finite order elements in $C\td{g_1, \dots, g_n}$ for brevity. 
  Thus
  \begin{align*}
    \chi_{K(n)}(EG \times_G A)
    &=\sum_{[g_1,\dots,g_n] \in G \bs G_{n,p} }
      \chi_{\mathbb{Q}}(E C\td{g_1, \dots, g_n}\times_{C\td{g_1,\dots,g_n}} A^{\td{g_1,\dots,g_n}})\\
    &=\sum_{[g_1,\dots,g_n] \in G \bs G_{n,p} }
      \chi_{\orb}[ \Lc(C\td{g_1, \dots, g_n}\dbs A^{\td{g_1,\dots,g_n}})]\\
    &=\sum_{[g_1,\dots,g_n] \in G \bs G_{n,p} }
      \sum_{[x] \in C\td{g_1,\dots,g_n}^{\con}_f} \chi_{\orb}[C\td{x,g_1,\dots,g_n}\dbs A^{\td{x,g_1,\dots,g_n}}]\\
    &=\sum_{[x, g_1,\dots,g_n] \in G \bs G_{n,p,+1}} \chi_{\orb}[C\td{x,g_1,\dots,g_n}\dbs A^{\td{x,g_1,\dots,g_n}}].
  \end{align*}
  The first equation is Corollary \ref{cor:euler morava quotients};
  the second equation is Theorem \ref{thm:Euler=Euler} for the orbispaces
  $C\td{g_1, \dots, g_n}\dbs A^{\td{g_1,\dots,g_n}}$;
  the final equation follows from the tautological decomposition, namely that the map
  \[\coprod_{(g_1,\dots, g_n) \in G_{n,p}} C\td{g_1,\dots,g_n}_f\to  G_{n,p,+1}\]
  sending
  $x \in C\td{g_1, \dots, g_n}_{f}$ to $(x, g_1,\dots,g_n)$ is bijective.
  Here we recall that $C\td{g_1, \dots, g_n}_f$ denotes the set of finite order elements in the centralizer
  of $\td{g_1,\dots,g_n}$.
  The conjugation action of $G$ on $G_{n,p,+1}$ permutes the summands on the
  left according to the conjugation action on  $G_{n,p}$,
  and it acts by conjugation on the centralizers.
  So upon choosing representatives of the conjugacy classes,
  the tautological decomposition descends to a bijection 
  \[\coprod_{[g_1,\dots, g_n] \in G\bs G_{n,p}} C\td{g_1,\dots,g_n}_f^{\con}\to  G\bs G_{n,p,+1} .
    \]    
\end{proof}

The previous theorem is a generalization of \cite[Lemma 4.13]{HKR}.
Indeed if we take $G$ finite, then using \cite[Theorem B]{HKR}, we get
\[ \vert G \bs G_{n,p} \vert =   \sum_{[x,g_1,\dots,g_n] \in G \bs G_{n,p,+1}} \frac{1}{\vert C\td{x,g_1,\dots,g_n}  \vert}=\frac{\vert G_{n,p,+1}  \vert} {\vert G \vert}.\]
The last identity is the class equation,
and $G_{n,p,+1}$ can be identified with $\Hom(\mathbb{Z} \times \mathbb{Z}_p^n, G)$
which is used in \cite[Lemma 4.13]{HKR}.

\begin{remark} \label{rk:chromatic sequence}
  In the case when $G$ has only $p$-primary torsion and a finite model for $\underline{E}G$,
  we can now explain why $\chi_{\orb}$ should be thought
  as the chromatic height $-1$ Euler characteristic.
  In Corollary \ref{cor:euler morava quotients} we showed that
  \[ \chi_{K(n+m)}(BG) = \sum_{[g_1,\dots,g_n] \in G \bs G_{n,p}} \chi_{K(m)} (BC\td{g_1, \dots, g_n}) \]
  for all $m,n\geq 0$, where $\chi_{K(0)}$ has to be read as
  the rational Euler characteristic $\chi_\mQ$.
  If every torsion element of the group $G$ has $p$-power order, then
  $G_{n-1,p,+1}=G_{n,p}$, by definition. So Theorem \ref{thm:chi_K(n)_chi_orb} says that
  \[ \chi_{K(n-1)}(BG) = \sum_{[g_1,\dots,g_n] \in G \bs G_{n,p}} \chi_{\orb}[B_{\gl}C\td{g_1, \dots, g_n}] .\]
  This extends the formula of
  Corollary \ref{cor:euler morava quotients} to the case $m=-1$, as along as we interpret
  $\chi_{K(-1)}$ as the orbifold Euler characteristic.
  This explains why at a prime $p$, we get a chromatic sequence of Euler characteristics
  starting at $-1$:
  \[\chi_{\orb}, \chi_{\mathbb{Q}}, \chi_{K(1)}, \dots, \chi_{K(n)}, \dots,\]
  where the $i$-th term can be obtained from the $j$-th term for $j<i$
  using the tuples of $j-i$ many commuting elements of finite order.
  We will investigate these invariants more closely in Section \ref{sec:examples} for $p=2$ in the case of right angled Coxeter groups. If $p$ is odd, the latter sequence is a special case of the sequence considered in \cite{Yanovski}. In this special case, the classifying space $BG$ is a finite colimit of $\pi$-finite $p$-spaces and $\chi_{\orb}[B_{\gl}G]=\vert BG \vert$, where $\vert -\vert$ is the generalized homotopy cardinality defined by Yanovski. 
\end{remark}

\section{Computations and examples} \label{sec:examples}

In this section we apply the theory developed thus far
to concrete examples and exhibit explicit formulas for the Morava $K$-theory
Euler characteristics of several classes of infinite discrete groups.
In some cases these computations can also be done using
the equivariant Atiyah--Hirzebruch spectral sequence
by employing nice cellular models for $\underline{E}G$,
compare Proposition \ref{prop:euleralternating}.
If such models are available, then we make computations in two different ways and compare them.
However, in general the combinatorics of cellular models of $\underline{E}G$ is too involved
for this direct approach, and then we employ
Corollary \ref{cor:euler morava quotients} and Corollary \ref{cor:Characters for BG}.
Examples are arithmetic groups and mapping class groups, see
Subsections \ref{subsec:GLZ}--\ref{subsec:gamma} below.
All computations at the height $n>1$ are new, with the exception of Subsection \ref{subsec:sl3},
which can be thought of as a consequence of \cite{TezYag}.
We believe that the calculations for the symplectic and mapping class groups
in Subsections \ref{subsec:SpZ} and \ref{subsec:gamma} are new also
for the chromatic height $n=1$.

If $G$ admits a finite model for $\underline{E}G$,
we shall write $\chi_{\orb} (BG)$ for the orbispace Euler characteristic $\chi_{\orb} [B_{\gl}G]$
of the global classifying space $B_{\gl}G$.
If $G$ is additionally countable and virtually torsion-free,
then as observed in Example \ref{eg:chi of G dbs A} (v),
the invariant $\chi_{\orb} (BG)$ coincides with the `virtual Euler characteristic'
of $G$ as defined by Wall \cite{Walleuler},
also called the orbifold Euler characteristic of $G$.
So our notation is consistent with that of previous literature.

\subsection{Right angled Coxeter groups}

In this subsection we apply the general theory to right angled Coxeter groups.
The main reference is \cite[Chapter 7]{Dav}. 
Let $L$ be a finite graph with vertex set $S$ and set of edges $\mathcal{E}$.
The right angled Coxeter group associated to $L$ is the group
\[W(L)=\langle s \in S \; \vert \; s^2=1 \;\text{for all}\; s \in S,\; \text{and}\; (st)^2=1 \;\text{for all} \; \{s,t\} \in \mathcal{E} \rangle. \]
For any subset $T \subset S$, the subgroup $W_T$ generated by $T$
is again a right angled Coxeter group associated to the full subgraph spanned by $T$.
The subset $T$ is called spherical if the subgroup $W_T$ is finite. This is the case if and only if the elements of $T$ commute, i.e., the full subgraph spanned by $T$ is the complete graph. In this case $W_T$ is a product of $T$ many copies of $C_2$.

We will use a finite model for $\underline{E}W(L)$, known as the Davis complex of $W(L)$ \cite[Chapter 7]{Dav}. We denote it by $\Sigma=\Sigma(W(L))$. We do not recall its construction here but we will now give a description of an equivariant cubical cell structure on it. 

In the case when $L$ has only one vertex, we have $W(L)=C_2$.
The group $C_2$ acts on the interval $I=[-1,1]$ by sign.
More generally, given a finite spherical $T \subset S$, the subgroup $W_T=\prod_{\vert T \vert} C_2$ acts on $I_T=\prod_{i=1}^{\vert T \vert} [-1,1]$ by the componentwise sign action.
The minimal $C_2$-CW structure on  $I=[-1,1]$ with one fixed 0-cell, one free 0-cell
and one free 1-cell induces a $W_T$-CW structure on $I_T$.

In general, one has a finite increasing filtration $\bigcup_{m \geq 0}\Sigma^m =\Sigma$,
where $\Sigma^{\vert S \vert}=\Sigma$ and for all $m \geq 0$,
there are cofiber sequences of $W(L)$-spaces
\begin{equation}\label{eq:WL_filtration}
\Sigma^{m-1} \hookrightarrow \Sigma^{m} \rightarrow \bigvee_{ \text{spherical} \;T \subset S, \; \vert T \vert =m} W(L) \ltimes_{W_T} I_T/\partial I_T,   
\end{equation}
where $\partial I_T$ is the boundary of the cube $I_T$. 

The papers \cite{San} and \cite{DegL} use this filtration to compute
the equivariant $K$-homology and $K$-theory of  $\underline{E}W(L)$.
We will first use the same method to compute $K(n)^*(BW(L))$ and $E^*(BW(L))$,
and then double-check the results
by using Corollary \ref{cor:euler morava quotients} and Corollary \ref{cor:Characters for BG}
instead.

We will consider cochain complexes with values in the abelian category
of graded $K(n)^*$-modules.
Since $W(L)$ has only $2$-torsion, we work at the prime $2$.
The calculations start from the  well-known fact that
\[K(n)^*(BC_2) \cong K(n)^*[x]/(x^{2^n}),\]
where $x$ is of degree 2, namely the Euler class of the complex line bundle over $B C_2$
associated to the complex sign representation.

The next lemma computes Bredon cohomology for cubes
and elementary abelian subgroups, corresponding to the spherical subsets.

\begin{lemma} \label{lem:cubebredon}
  Let $T \subset S$ be a spherical subset of cardinality $l$ of a finite graph.
  Then 
  \[\widetilde{H}^i_{W_T}(I_T/\partial I_T, K(n)^*(-))=
    \begin{cases} (K(n)^*\{x, \dots, x^{2^n-1}\})^{\otimes l}& \text{ for $i=0$,}\\
    0& \text{ for $i\ne 0$,}
  \end{cases} \]
where the tensor power is formed over $K(n)^*$.
\end{lemma}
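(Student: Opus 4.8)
The plan is to compute the Bredon cohomology $\widetilde{H}^*_{W_T}(I_T/\partial I_T, K(n)^*(-))$ using the $W_T$-CW structure on $I_T$ induced from the product of the three-cell $C_2$-CW structure on $[-1,1]$. Since $W_T=\prod_{i=1}^l C_2$ and $I_T=\prod_{i=1}^l[-1,1]$, the equivariant cells of $I_T$ are products of equivariant cells of the factors, so the Bredon cochain complex $C^*_{W_T}(I_T;K(n)^*(-))$ is the tensor product over $K(n)^*$ of $l$ copies of the Bredon cochain complex of $[-1,1]$ with its $C_2$-action. Concretely, $\widetilde{H}^*_{C_2}(I/\partial I, K(n)^*(-))$ computes the reduced equivariant cohomology of a representation sphere $S^\sigma$ for the sign representation $\sigma$, and one has a cofiber sequence $(C_2)_+\to S^0\to S^\sigma$ (coming from the $0$-skeleton), which upon applying $K(n)^*_{C_2}(-)$ identifies $\widetilde{K(n)}^*_{C_2}(S^\sigma)$ with the kernel of the restriction $K(n)^*(BC_2)\to K(n)^*(pt)=K(n)^*$. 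Using $K(n)^*(BC_2)=K(n)^*[x]/(x^{2^n})$ with $x$ in degree $2$, this kernel is the free $K(n)^*$-module $K(n)^*\{x,x^2,\dots,x^{2^n-1}\}$, concentrated in cohomological degree $0$ (as a graded Bredon cohomology group).

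First I would set up the tensor-product decomposition of the Bredon cochain complex rigorously: for a product $G_1\times G_2$ acting on $X_1\times X_2$ via the product CW-structures, the Bredon cochain complex with coefficients in an equivariant cohomology theory that satisfies a Künneth isomorphism (which $K(n)$ does for the relevant finite groups by \cite[Corollary 5.11]{HKR}, or more elementarily because the spaces involved are finite and all the modules in sight are free over the field $K(n)^*$) splits as the tensor product of the individual Bredon cochain complexes. Then I would invoke the algebraic Künneth theorem over the graded field $K(n)^*$: since everything is free (in fact finitely generated free) over $K(n)^*$, there are no Tor terms, and the cohomology of the tensor product complex is the tensor product of the cohomologies. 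Feeding in the single-factor computation $\widetilde{H}^*_{C_2}(I/\partial I,K(n)^*(-))=K(n)^*\{x,\dots,x^{2^n-1}\}$ concentrated in degree $0$ yields that the $l$-fold tensor product is concentrated in degree $0$ with value $(K(n)^*\{x,\dots,x^{2^n-1}\})^{\otimes l}$, which is exactly the claim.

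The main obstacle is the bookkeeping needed to justify the tensor-product decomposition of the Bredon cochain complex and the vanishing of correction terms: one must check that the equivariant cells of $I_T$ are precisely the products of equivariant cells of the factors, that the coefficient system $K(n)^*(-)$ evaluated on orbits $W_T/H$ for subgroups $H$ (which are themselves products of $C_2$'s, so their classifying spaces are products of $BC_2$'s) behaves multiplicatively, and that the differentials respect the tensor decomposition. Since $K(n)^*(BC_2)$ and all its modules appearing here are free over the evenly-graded field $K(n)^*$, the external Künneth map is an isomorphism and there are no Tor obstructions, so once the tensor decomposition is in place the computation is purely formal. I would keep this part brief, citing the three-cell $C_2$-CW structure on $[-1,1]$, the identification of $\widetilde{H}^*_{C_2}(I/\partial I,K(n)^*(-))$ with $\ker(K(n)^*(BC_2)\to K(n)^*)$, and the Künneth isomorphism over $K(n)^*$, and leave the routine diagram-chasing to the reader.
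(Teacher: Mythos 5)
Your proposal is correct and follows essentially the same route as the paper: the product $W_T$-CW structure coming from the minimal three-cell $C_2$-CW structure on $[-1,1]$, the identification of the one-variable reduced Bredon cohomology with the kernel of the surjective restriction $K(n)^*(BC_2)\to K(n)^*$, and the K\"unneth theorem over the graded field $K(n)^*$ (together with the K\"unneth isomorphism for $K(n)^*$ of products of classifying spaces) to pass to the $l$-fold tensor power. The only cosmetic difference is that you package the rank-one case via the cofiber sequence $(C_2)_+\to S^0\to S^\sigma$ rather than writing out the two-term Bredon cochain complex directly, but this is the same computation.
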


\begin{proof}
  We start with the case where $T$ has one element, in which case $I_T=[-1,1]$ with
  sign action by $W_T=C_2$.
  We use the minimal $C_2$-CW-structure on $I/\partial I$ with two fixed $0$-cells
  (one of which is the basepoint) and one free $1$-cell.
  The reduced Bredon cochain complex of $I/\partial I$ in the category of $K(n)^*$-modules
  is concentrated in degrees $0$ and $1$, where the groups are the kernel and cokernel,
  respectively of the homomorphism
  \[ \res \colon
    K(n)^*\{1,x,\dots,x^{2^n-1}\}= K(n)^*(B C_2)  \to K(n)^* . \]
  The restriction homomorphism is surjective and sends all positive powers of $x$ to zero,
  which proves the claim in the special case.

  In the general case we use the product $W_T$-cell structure on $I_T$
  and the K\"unneth formula for the Morava $K$-theories.
  The stabilizers of $I_T$ are elementary abelian of the form $W_{T'}$ where $T' \subset T$.
  The $W_T$-cellular structure of $I_T$ is the product of the $C_2$-cellular structures
  of $I=[-1,1]$ and each product cell has an elementary abelian stabilizer.
  Using the K\"unneth formula $K(n)^*(BG\times BH) \cong K(n)^*(BG) \otimes_{K(n)^*} K(n)^*(BH)$,
  we see that the reduced Bredon cochain complex
  \[\widetilde{C}^{\bullet}_{W_T}(I_T/\partial I_T, K(n)^*(-))\] 
  is isomorphic to the tensor power
  \[\widetilde{C}^{\bullet}_{C_2}(I/\partial I, K(n)^*(-))^{\otimes l}\]
  in the category of cochain complexes of graded $K(n)^*$-modules.
  Using the special case and that $K(n)^*$ is a graded field,
  the K\"unneth theorem for cochain complexes proves the desired result. 
\end{proof}

\begin{prop}\label{prop:chi_of_W(L)}
  Let $L$ be a finite graph with vertex set $S$, and $n \geq 0$.
  Then for the prime $p=2$,
  the $K(n)^*$-module $K(n)^*(BW(L))$ is concentrated in even degrees and
  is of dimension 
  \[ \dim_{K(n)^*}(K(n)^*(B W(L))) =
    \sum_{l=0}^{\vert S \vert} s(l)\cdot (2^n-1)^l, \]
  where $s(l)$ is the number of spherical subsets $T \subset S$ of size $l$.
  Hence the $K(n)$-Euler characteristic is
  \[\chi_{K(n)}(BW(L))=\sum_{l=0}^{\vert S \vert } s(l) (2^n-1)^l.\]
\end{prop}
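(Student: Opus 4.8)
The plan is to compute $K(n)^*(BW(L))$ directly from the filtration \eqref{eq:WL_filtration} via the associated equivariant Atiyah--Hirzebruch (Bredon) spectral sequence for $\underline{E}W(L)=\Sigma$, and then to cross-check the dimension count against Corollary \ref{cor:euler morava quotients}. First I would set up the Bredon cohomology spectral sequence
\[ E_2^{s,t} = H^s_{W(L)}(\Sigma; \underline{K(n)^t}) \Longrightarrow K(n)^{s+t}(BW(L)), \]
using that $\Sigma$ is a finite model for $\underline{E}W(L)$, so that $EW(L)\times_{W(L)}\Sigma \simeq BW(L)$. Because the cofiber sequences in \eqref{eq:WL_filtration} express $\Sigma$ as built from the pieces $W(L)\ltimes_{W_T} I_T/\partial I_T$ over spherical subsets $T$, Lemma \ref{lem:cubebredon} identifies the contribution of each such piece: it is concentrated in Bredon-degree $0$ and contributes $(K(n)^*\{x,\dots,x^{2^n-1}\})^{\otimes |T|}$, a free $K(n)^*$-module of rank $(2^n-1)^{|T|}$ sitting in even total degree. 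Summing over all spherical $T$ and using $s(l)=|\{T\subset S\ \text{spherical},\ |T|=l\}|$ gives that $E_2$ is concentrated in even total degrees, free over $K(n)^*$ of total rank $\sum_{l=0}^{|S|} s(l)(2^n-1)^l$.

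The key step is then the collapse of the spectral sequence. Since $E_2^{s,t}$ is concentrated in even total degrees $s+t$ (the Bredon cohomology vanishes outside $s=0$ on each filtration subquotient, and $K(n)^t$ is nonzero only for $t$ even), all differentials, which change total degree by $1$, must vanish for parity reasons; hence $E_2 = E_\infty$. There are no extension problems affecting the dimension count because $K(n)^*$ is a graded field, so $K(n)^*(BW(L))$ is a free $K(n)^*$-module whose rank equals the total rank of $E_\infty$. This yields the stated formula for $\dim_{K(n)^*}(K(n)^*(BW(L)))$ and shows it is concentrated in even degrees; the Euler characteristic formula is then immediate since the odd part vanishes.

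Finally I would record the promised double-check via Corollary \ref{cor:euler morava quotients}: $\chi_{K(n)}(BW(L)) = \sum_{[g_1,\dots,g_n]\in W(L)\bs W(L)_{n,p}} \chi_{\mathbb{Q}}(BC\langle g_1,\dots,g_n\rangle)$. One uses the structure theory of Coxeter groups (every finite-order element of $W(L)$ is conjugate into some spherical $W_T\cong C_2^{|T|}$, and the relevant centralizers are themselves of the form $W_{T'}\times(\text{something rationally acyclic or with trivial }\chi_{\mathbb{Q}})$) to see that only tuples landing in spherical subgroups contribute, and that each spherical $T$ of size $l$ contributes $(2^n-1)^l$ conjugacy classes of nontrivial $n$-tuples — matching the count $|{(C_2^l)}_{n,2}\setminus\{0\}| = (2^n-1)^l$ since $|({C_2^l})_{n,2}| = |\Hom(\mathbb{Z}_2^n, C_2^l)| = |\Hom((\mathbb{Z}/2)^n, (\mathbb{Z}/2)^l)| = 2^{nl}$, but one must carefully organize the inclusion–exclusion so that each spherical $T$ is counted with the number of $n$-tuples generating exactly $W_T$. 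I expect the main obstacle to be precisely this bookkeeping on the Corollary side: tracking which centralizers in $W(L)$ are rationally acyclic (so contribute $\chi_{\mathbb{Q}}=1$) versus which have $\chi_{\mathbb{Q}}=0$, and assembling the generating-subgroup counts into the clean sum $\sum_l s(l)(2^n-1)^l$. The spectral-sequence computation itself is routine once Lemma \ref{lem:cubebredon} is in hand, so the real content is the cross-check and the verification that no hidden differentials or extensions intervene.
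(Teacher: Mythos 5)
Your proposal is correct and follows essentially the same route as the paper: the paper likewise uses the filtration \eqref{eq:WL_filtration} together with Lemma \ref{lem:cubebredon} to show (by induction on the filtration degree, via the long exact sequences) that the Bredon cohomology of $\Sigma$ is concentrated in degree $0$ and even, so the equivariant Atiyah--Hirzebruch spectral sequence collapses and the dimension count follows since $K(n)^*$ is a graded field. The cross-check against Corollary \ref{cor:euler morava quotients} that you sketch is carried out in the paper as Proposition \ref{prop:coxeterconjugacy} and Remark \ref{rk:coxeterconjugacy}, with the bookkeeping resolved exactly as you anticipate.
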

\begin{proof}
  We will show that the equivariant Atiyah--Hirzebruch spectral sequence based on the Davis complex
  $\Sigma=\Sigma(W(L))$ that models $\un{E}W(L)$ collapses.
  To determine the $E^2$-term of this spectral sequence
  we use the filtration $\Sigma^m$ of $\Sigma$.
  In a first step we show by induction on $m$ that
  the Bredon cohomology groups $H^i_{W(L)}(\Sigma^m, K(n)^*(-))$ are trivial for $i\ne 0$,
  and $H^0_{W(L)}(\Sigma^m, K(n)^*(-))$ is even and of dimension
  \[ \dim_{K(n)^*}(H^0_{W(L)}(\Sigma^m, K(n)^*(-))) =
    \sum_{l=0}^m s(l)\cdot (2^n-1)^l.  \]
  The induction starts with $m=-1$, where $\Sigma^{-1}$ is empty, and there is nothing to show.
  For the inductive step we use the cofiber sequence \eqref{eq:WL_filtration}.
  By induction and Lemma \ref{lem:cubebredon}, the Bredon cohomology groups of $\Sigma^{m-1}$
  and of the cofiber vanish except in cohomological degree $i=0$,
  and they are concentrated in even internal degrees.
  So the Bredon cohomology long exact sequence decomposes into short exact sequence
  and shows that the same is true for $\Sigma^m$.
  Moreover, the $K(n)^*$-dimension of the 0-th Bredon cohomology of $\Sigma^m$
  is the sum of the dimensions of the 0-th Bredon cohomology of $\Sigma^{m-1}$
  and the cofiber, which we know by induction and Lemma \ref{lem:cubebredon}.
  This completes the inductive step.

  Because $H^*_{W(L)}(\Sigma,K(n)^*(-))$ is concentrated in Bredon cohomology degree 0,
  the Atiyah--Hirzebruch spectral sequence for $K(n)^*(B W(L))$ collapses at the $E^2$-term.
  Since the spectral sequence consists of modules over the graded field $K(n)^*$,
  the evenness property and dimension of the $E^2$-term are inherited by the abutment.
  Hence $K(n)^*(B W(L))$ is concentrated in even degrees, and
  \begin{align*}
    \dim_{K(n)^*}(K(n)^*(B W(L))) &=  \dim_{K(n)^*}(H^0_{W(L)}(\Sigma^{|S|},K(n)^*(-)))\\
    &= \sum_{l=0}^{\vert S \vert} s(l)\cdot (2^n-1)^l.
  \end{align*}
\end{proof}

As a reality check we will now rediscover the formula
for the $K(n)$-Euler characteristic of $B W(L)$
from Proposition \ref{prop:chi_of_W(L)} in a different way
by using our theory, specifically Corollary \ref{cor:euler morava quotients}.
For this we need to count the number of $W(L)$-conjugacy classes
of $n$-tuples of $2$-power order elements in $W(L)$:

\begin{prop} \label{prop:coxeterconjugacy}
  Let $L$ be a finite graph with vertex set $S$, and $n \geq 0$.
  Then
  \[\vert W(L) \bs W(L)_{n,2} \vert=\sum_{l=0}^{\vert S \vert } s(l) (2^n-1)^l.\]
\end{prop}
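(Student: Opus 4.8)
The plan is to count conjugacy classes of commuting $n$-tuples of $2$-power order elements in $W(L)$ by reducing everything to the finite parabolic subgroups $W_T$ for spherical $T \subseteq S$. The key structural input is the description of torsion in right angled Coxeter groups: every element of finite order in $W(L)$ is conjugate into some finite (hence spherical) parabolic subgroup $W_T \cong (C_2)^{|T|}$, and moreover there is control over when two elements of parabolics become conjugate in the ambient group. So first I would recall from \cite[Chapter 4 and Chapter 12]{Dav} (or the standard theory of parabolic subgroups of Coxeter groups) that each finite order element lies in a unique minimal spherical parabolic up to conjugacy, and that an element of a spherical parabolic $W_T$ generates the subgroup $W_{T'}$ for the subset $T'\subseteq T$ of generators appearing in its (unique, commuting) reduced expression. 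Since a tuple $(g_1,\dots,g_n)\in W(L)_{n,2}$ consists of commuting finite order elements, the subgroup $\td{g_1,\dots,g_n}$ they generate is a finite abelian $2$-group, hence conjugate into a spherical parabolic $W_T$; after conjugating we may assume $\td{g_1,\dots,g_n}= W_{T}$ for a uniquely determined spherical subset $T$, namely the union of the supports.

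Next I would set up the counting. For a fixed spherical $T$ of size $l$, the tuples $(g_1,\dots,g_n)\in (W_T)^n$ whose supports jointly cover all of $T$ — equivalently, tuples in $(C_2)^{nl}$ such that the $n$ vectors span $\mathbb{F}_2^l$ — need to be counted and then divided into $W(L)$-conjugacy classes. The crucial claim, which I expect follows from the parabolic conjugacy control mentioned above (two subsets of a parabolic $W_T$ that generate $W_T$ are conjugate in $W(L)$ only if already conjugate inside $N_{W(L)}(W_T)/$ something, and in fact the relevant identification is trivial because the generating set of a spherical parabolic is canonical), is that two such "full-support" tuples in $W_T$ and in $W_{T'}$ are $W(L)$-conjugate if and only if $T=T'$ and the tuples are already equal. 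Granting this, the number of $W(L)$-orbits contributed by each spherical $T$ of size $l$ equals the number of $n$-tuples of vectors in $\mathbb{F}_2^l$ spanning $\mathbb{F}_2^l$, and the problem reduces to the elementary linear algebra identity: the number of $n$-tuples $(v_1,\dots,v_n)$ of vectors in $\mathbb{F}_2^l$ with $\td{v_1,\dots,v_n}=\mathbb{F}_2^l$ equals $(2^n-1)^l$. This can be proved by inclusion–exclusion over subspaces, or more slickly by induction on $l$ observing that an arbitrary tuple in $\mathbb{F}_2^l$ spans exactly one subspace, giving $2^{nl}=\sum_{j=0}^l \binom{l}{j}_2 f(j)$ where $f(j)$ is the number of spanning $j$-tuples in $\mathbb{F}_2^j$, and then checking $f(j)=(2^j-1)^j$... actually the cleanest route is: project onto the first coordinate to see that spanning tuples in $\mathbb{F}_2^l$ biject with (spanning tuples in $\mathbb{F}_2^{l-1}$) $\times$ ($2^n-1$ choices for the last coordinate-data that breaks the degeneracy), yielding $f(l)=(2^n-1)f(l-1)$ with $f(0)=1$; alternatively just cite that the number of surjections $\mathbb{F}_2^n \twoheadrightarrow \mathbb{F}_2^l$ composed suitably gives $(2^n-1)^l$ directly since over $\mathbb{F}_2$ a set of $n$ vectors spans $\mathbb{F}_2^l$ iff each of the $l$ coordinate functionals is nonzero on it, and these conditions are independent. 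Summing over all spherical $T$ and writing $s(l)$ for the number of spherical subsets of size $l$ gives $\sum_{l=0}^{|S|} s(l)(2^n-1)^l$, which is the claim; note this also matches $\chi_{K(n)}(BW(L))$ from Proposition \ref{prop:chi_of_W(L)} via Corollary \ref{cor:euler morava quotients}, since all centralizers $C\td{g_1,\dots,g_n}$ are themselves right angled Coxeter groups (hence have $\un{E}$-models and, being either finite or containing free abelian factors, the rational Euler characteristic bookkeeping is consistent — but for this proposition we only need the counting).

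The main obstacle I anticipate is the conjugacy-control claim: verifying that distinct full-support tuples in spherical parabolics stay non-conjugate in $W(L)$, and that the spherical subset $T=\bigcup_i \mathrm{supp}(g_i)$ is a genuine $W(L)$-conjugacy invariant of the tuple. For elements of $W_T$ generating $W_T$, this should reduce to the statement that $W_T$ is its own normalizer's "parabolic part" and that the canonical generating reflections of a finite parabolic are permuted only by graph automorphisms of the nerve restricted to $T$ that are realized by conjugation — but for right angled Coxeter groups, Tits' solution to the conjugacy problem and the normal form for finite parabolics make this manageable; one invokes that reflections in $W(L)$ are conjugate iff the corresponding vertices lie in the same component of a certain "odd-labeled" graph, which for the right angled case (all finite labels equal to $2$) means two distinct generators are never conjugate, pinning down $T$ and the tuple. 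I would cite \cite{Dav} and standard Coxeter-theoretic references (e.g.\ the conjugacy of parabolic subgroups and of their generators) rather than reprove this, and then the rest is the elementary count above.
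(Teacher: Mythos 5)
Your overall strategy coincides with the paper's: reduce to spherical parabolics, count the tuples attached to each spherical subset, and check non-conjugacy. However, there is a genuine error in the counting step. You identify the tuples $(g_1,\dots,g_n)\in(W_T)^n$ ``whose supports jointly cover $T$'' with the tuples ``spanning $\mathbb{F}_2^{l}$'', and you assert that the number of spanning $n$-tuples in $\mathbb{F}_2^l$ is $(2^n-1)^l$. Both claims are false. For $n=1$ and $T=\{s,t\}$ an edge of $L$, the single element $g_1=st$ has support all of $T$ but generates only the order-two subgroup $\{1,st\}$, which is not $W_T$ and is not conjugate to any standard parabolic; so your opening assertion that one may arrange $\td{g_1,\dots,g_n}=W_T$ with $T$ the union of supports already fails. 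The number of spanning $n$-tuples of $\mathbb{F}_2^l$ is $\prod_{i=0}^{l-1}(2^n-2^i)$ (the number of surjections $\mathbb{F}_2^n\to\mathbb{F}_2^l$), e.g.\ $6$ rather than $9$ for $n=l=2$; and your ``coordinate functionals'' argument is invalid, since a family of vectors on which every coordinate functional is somewhere nonzero need not span (take the single vector $(1,1)$ in $\mathbb{F}_2^2$). What is true, and what the paper uses, is that the tuples with support exactly $T$ --- meaning that for each generator $s_i\in T$ the set of indices $j$ with $s_i$ occurring in $g_j$ is nonempty --- number exactly $(2^n-1)^l$: one nonempty subset of $\{1,\dots,n\}$ per generator. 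So the correct invariant to partition by is the support (equivalently the parabolic closure), not the generated subgroup; with that substitution your count becomes the paper's, and the final formula is rescued only because the covering count happens to equal the number you wrongly attributed to spanning tuples.

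For the non-conjugacy step the paper has a one-line argument you should adopt: the abelianization of $W(L)$ is the $\mathbb{F}_2$-vector space on $S$, the image of each $g_j$ there records precisely its support, and an element of a spherical parabolic is determined by its support; hence the tuple of abelianized images is a complete conjugacy invariant for tuples lying in spherical parabolics, and distinct such tuples (in possibly different $W_T$) are never conjugate. Your proposed detour through Tits' conjugacy algorithm and the odd graph only directly addresses conjugacy of single reflections, not of products of commuting generators, and is not needed here.
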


\begin{proof}
  By \cite[pages 13-14]{DegL}, every finite subgroup of $W$
  is subconjugate to $W_T$ for some spherical subset $T$ of $S$.
  Any element $s \in S$ gives $(2^n-1)$ many elements of $W_{n,2}$ by taking tuples $(w_1, \dots, w_n)$, where at least one $w_i=s$ and $w_j=1$ or $s$ for any $j$. Now consider any general element $(w_1, \dots, w_n) \in W_{n,2}$, where all the coordinates belong to some $W_T$, where $T$ is spherical. Let $\{s_1, \dots, s_l\}$ be the set of all letters from the reduced expressions of $w_i$-s. Then $(w_1, \dots, w_n)$ can be uniquely written up to permutation in the Cartesian power $W^{\times n}$ as a product $v_1v_2 \cdots v_l$, where $v_i \in \{1,s_i\}^{\times n} -\{(1,\dots,1)\}$. This implies that the number of tuples $(w_1, \dots, w_n) \in W_{n,2}$ where all the coordinates belong to some $W_T$ with $T$ spherical and which involve exactly $l$ many letters in total from $S$ is equal to $(2^n-1)^l$, meaning that every spherical subset of size $l$ gives exactly $(2^n-1)^l$ many elements in $W_{n,2}$. None of these tuples are conjugate since the abelianization of $W$ is an $\mathbb{F}_2$-vector space generated by $S$.
  In particular, the normalizer of $W_T$ for $T$ spherical,
  agrees with the centralizer which also follows from \cite[Proposition 4.10.2]{Dav}.
  This completes the proof. \end{proof}

\begin{remark} \label{rk:coxeterconjugacy} Proposition \ref{prop:coxeterconjugacy} can be now used to recover the formula  
\[\chi_{K(n)}(BW(L))=\sum_{l=0}^{\vert S \vert } s(l) (2^n-1)^l\]
without using the equivariant cubical cell structure of $\Sigma$. Indeed by Corollary \ref{cor:euler morava quotients}, it suffices to show that $\chi_{\mathbb{Q}}(BC(H))=1$ for any $H \leq W_{T}$, where $T$ is a spherical subset.  It follows from \cite[Proposition 25 and Theorem 32]{Bark} and \cite[Theorem 4.1.6]{Dav} that $C(H)=W_{S'}$ for some subset $S' \subset S$. But $\chi_{\mathbb{Q}}(BW_{S'})=1$ since right-angled Coxeter groups are rationally acyclic. 

\end{remark}

\begin{eg} \label{eg:Coxeterorb}
  For $n=0$, the Euler characteristic formula of Proposition \ref{prop:chi_of_W(L)}
  specializes to $\chi_{\mathbb{Q}}(BW(L))=1$, a consequence of the well-known fact
  that $B W(L)$ is rationally acyclic.
 If we pretend that we are allowed to let $n=-1$, then we recover the well-known orbifold Euler characteristic 
\[\chi_{\orb}(BW(L))=\sum_{l=0}^{\vert S \vert } s(l)\cdot \tfrac{(-1)^l}{2^l},\]
see for example \cite[(16.8)]{Dav}.
This is in tune with our slogan that the orbifold Euler characteristic
is a `height $-1$ invariant', compare Remark \ref{rk:chromatic sequence}. 
For more concreteness, we examine a special case when the graph $L$ does not contain triangles. In this case we can easily understand the number $s(l)$ for all $l \geq 0$. Indeed, $s(0)=1$, $s(1)=\vert S \vert$ and $s(2)=\vert \mathcal E \vert$. For $n \geq 3$, we have $s(n)=0$. Hence Proposition \ref{prop:coxeterconjugacy} and Remark \ref{rk:coxeterconjugacy} give us 
\[\chi_{K(n)}(BW(L))=\vert W \bs W_{n,2} \vert=1+\vert S \vert(2^n-1)+\vert  \mathcal  E \vert (2^n-1)^2.\]
For $n=-1$, one recovers the orbifold Euler characteristic \cite[(16.8)]{Dav}
\[\chi_{\orb}(B W(L))=1-\tfrac{\vert S \vert}{2}+\tfrac{\vert  \mathcal  E \vert}{4}.\]
\end{eg}

Now we apply the character theory of Section \ref{sec: E theory of BG}. Let $E$ denote the Morava $E$-theory at prime $2$ and height $n$. We write $W=W(L)$ for brevity.
Corollary \ref{cor:Characters for BG} provides an isomorphism
\[L(E^*) \otimes_{E^*} E^*(BW) \cong \bigoplus_{[w_1,\dots,w_n] \in W \bs W_{n,2}} H^*(BC\td{w_1, \dots, w_n}; L(E^*)).\]
As we observed in Remark \ref{rk:coxeterconjugacy},
the space $BC\td{w_1, \dots, w_n}$ has trivial rational cohomology. Hence we get an isomorphism
\[L(E^*) \otimes_{E^*} E^*(BW) \cong L(E^*)\{W \bs W_{n,2}\},\]
where the right hand side is the free $L(E^*)$-module on the set $W \bs W_{n,2}$,
with generators of degree $0$. The cardinality of this set,
and hence the rank of $L(E^*) \otimes_{E^*} E^*(BW)$ is given by
Proposition \ref{prop:coxeterconjugacy}. 

Because $E^*(BW(L))$ is finitely generated over $E^*$
by Theorem \ref{thm:compact_finiteness} (iv),
and because $K(n)^*(BW(L))$ is even by Proposition \ref{prop:chi_of_W(L)},
we can conclude by \cite[Proposition 3.5]{StricklandMorava}
that $E^*(BW(L))$ is in fact even and free over $E^*$. Since we also know its rank after scalar
extension to the ring $L(E^*)$, we deduce:

\begin{theorem}
  Let $L$ be a finite graph with vertex set $S$, and $n \geq 0$.
  The Morava $E$-cohomology $E^*(BW(L))$ at the prime $2$ is even and free of rank 
  \[\sum_{l=0}^{\vert S \vert } s(l) (2^n-1)^l.\]
\end{theorem}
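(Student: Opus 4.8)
The plan is to feed the even-ness of $K(n)^*(BW(L))$ into Strickland's freeness criterion, and then pin down the rank by base-changing the character isomorphism of Corollary~\ref{cor:Characters for BG} to $L(E^*)$. First I would recall that the Davis complex $\Sigma(W(L))$ is a finite $W(L)$-CW-model for $\underline{E}W(L)$, so that the global classifying space $B_{\gl}W(L)$ is a compact orbispace. Hence Theorem~\ref{thm:compact_finiteness}~(iv) shows that $E^*(BW(L))$ is finitely generated as a graded $E^*$-module. Proposition~\ref{prop:chi_of_W(L)} (at the prime $2$) shows that $K(n)^*(BW(L))$ is concentrated in even degrees. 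Combining these two facts, \cite[Proposition~3.5]{StricklandMorava} forces $E^*(BW(L))$ to be even and free as a graded $E^*$-module; let $r$ denote its rank.

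It then remains to compute $r$. Since $E^*(BW(L))$ is free of rank $r$ over $E^*$, and $L(E^*)$ is a $p^{-1}E^*$-algebra (faithfully flat, in particular flat), scalar extension gives $L(E^*)\tensor_{E^*}E^*(BW(L))\iso L(E^*)^{\,r}$ as graded $L(E^*)$-modules, so $r$ is the rank of the left-hand side. Corollary~\ref{cor:Characters for BG} identifies that left-hand side with $\bigoplus_{[w_1,\dots,w_n]\in W(L)\bs W(L)_{n,2}} H^*(BC\td{w_1,\dots,w_n};L(E^*))$. By Remark~\ref{rk:coxeterconjugacy} (using \cite{Bark} together with \cite[Theorem~4.1.6]{Dav}), each centralizer $C\td{w_1,\dots,w_n}$ is again a right-angled Coxeter group $W_{S'}$ for some $S'\subset S$, hence rationally acyclic, hence $L(E^*)$-acyclic since $L(E^*)$ is a $\mQ$-algebra. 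Thus each summand reduces to a single copy of $L(E^*)$ in degree $0$, and $r=\vert W(L)\bs W(L)_{n,2}\vert$. Finally, Proposition~\ref{prop:coxeterconjugacy} evaluates this as $\sum_{l=0}^{\vert S\vert} s(l)(2^n-1)^l$, which is the asserted rank.

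There is no serious obstacle here, since the substantive inputs are already in place; the only point worth stating carefully is the rank-matching under base change to $L(E^*)$, but because $E^*(BW(L))$ is literally free over $E^*$ this is immediate, and one does not need the more delicate faithful-flatness arguments of \cite{HKR}. (Alternatively, one could bypass Strickland and argue directly from the collapse of the Atiyah--Hirzebruch spectral sequence for $E^*(BW(L))$ built on $\Sigma(W(L))$, exactly as in the proof of Proposition~\ref{prop:chi_of_W(L)}, using that $E^*(BC_2)$ is a free $E^*$-module of rank $2^n$; this also yields freeness and the same rank, and then Corollary~\ref{cor:Characters for BG} serves as the consistency check.)
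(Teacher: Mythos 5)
Your proposal is correct and follows essentially the same route as the paper: Strickland's criterion applied to the evenness of $K(n)^*(BW(L))$ and the finite generation of $E^*(BW(L))$ over $E^*$, followed by determining the rank via the character isomorphism of Corollary~\ref{cor:Characters for BG} together with the rational acyclicity of the centralizers and the count in Proposition~\ref{prop:coxeterconjugacy}. The paper even records your alternative Atiyah--Hirzebruch argument on the Davis complex (using the K\"unneth theorem of \cite[Corollary 5.11]{HKR}) as a remark immediately after the theorem.
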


The previous theorem can be directly established
without using \cite[Proposition 3.5]{StricklandMorava}.
In fact one can use the cellular decomposition of the Davis complex $\Sigma$
and the same strategy as in Proposition \ref{prop:chi_of_W(L)}.
One only needs to use the generalized version of K\"unneth theorem proved
in \cite[Corollary 5.11]{HKR}.
See also \cite{KLL} for a general computation of equivariant (co)homology of graph products.

\subsection{The special linear group \texorpdfstring{$SL_3(\mathbb{Z})$}{SL(3,Z}} \label{subsec:sl3}

In this subsection we recover the well-known calculations of $K(n)^{*}(BSL_3(\mZ))$ and $E^{*}(BSL_3(\mZ))$. These groups have been computed by Tezuka and Yagita in \cite{TezYag} using the finite cellular model of $\underline{E}SL_3(\mZ)$ constructed by Soul\'{e} \cite{So}. The height $1$ computations can be also recovered by \cite{San1}. Some of these calculations are also included in \cite[Section VII]{Schuster}. 

The main strategy in \cite{TezYag} and \cite{San1} is to use the equivariant Atiyah--Hirzebruch spectral sequence, compute the Bredon cohomology of $\underline{E}SL_3(\mZ)$ with various coefficients,
and then the generalized cohomology. The spectral sequences are relatively easy to handle since the virtual cohomological dimension of $SL_3(\mZ)$ is equal to $3$ \cite{BS73}.  
The explicit finite model of Soul\'{e} \cite{So} is based on finite subgroup classification of  $SL_3(\mathbb{Z})$ by \cite{Tah}. We do not recall here the details of this model and refer to \cite[Table 1]{San1} and \cite[Theorem 2]{So} for the equivariant cellular structure and the list of stabilizers. 

We first recall the approach of \cite{TezYag} for computing $\chi_{K(n)}(BSL_3(\mZ))$. After this we offer an alternative way to arrive at these calculations:
instead of completely writing out the equivariant cellular chain complex
of $\underline{E}SL_3(\mZ)$, we compute the centralizers of finite abelian subgroups
and use Corollary \ref{cor:euler morava quotients} to calculate $\chi_{K(n)}(BSL_3(\mZ))$.
For $n=1$ this has been already done in \cite[Example 4.3]{Adem}. \smallskip

The group $SL_3(\mZ)$ has only $2$ and $3$-torsion \cite{Tah}.
Hence the $p$-primary case for $p \geq 5$ is equivalent to the rational case.
By \cite[p.\,8 Corollary]{So}, the space $\underline{B}SL_3(\mZ)=SL_3(\mZ) \bs \underline{E}SL_3(\mZ)$ is contractible
and hence rationally acyclic. We start with the easier case of $p=3$.
The following is proved in \cite[Section 5]{TezYag}:

\begin{prop}\label{prop:sl3 at prime 3} Let $n \geq 1$ and $p=3$. Then $K(n)^*(BSL_3(\mZ))$ is even and $\chi_{K(n)}(BSL_3(\mZ))=3^n$. 
\end{prop}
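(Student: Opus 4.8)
The plan is to split the statement into its two assertions and prove them by different means: the evenness of $K(n)^*(BSL_3(\mZ))$ by the equivariant Atiyah--Hirzebruch spectral sequence over Soul\'e's model, and the value $\chi_{K(n)}(BSL_3(\mZ))=3^n$ by Corollary~\ref{cor:euler morava quotients} together with a count of conjugacy classes. For the evenness I would recall the argument of \cite{TezYag}. Since $SL_3(\mZ)$ has virtual cohomological dimension $3$ \cite{BS73} and Soul\'e's model for $\underline{E}SL_3(\mZ)$ is a finite $3$-dimensional $SL_3(\mZ)$-CW complex \cite{So}, the equivariant Atiyah--Hirzebruch spectral sequence computing $K(n)^*(BSL_3(\mZ))$ at $p=3$ has $E_2$-page the Bredon cohomology $H^*_{SL_3(\mZ)}(\underline{E}SL_3(\mZ);K(n)^*(-))$, concentrated in cohomological degrees $0,1,2,3$. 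Because $K(n)^*(BH)$ is a free $K(n)^*$-module concentrated in even degrees for every finite $H$ with $3\nmid|H|$, the $3$-local information is controlled by the cells whose stabilizer has order divisible by $3$; by Tahara's classification \cite{Tah} those stabilizers involve only $C_3$ and $S_3$, for which $K(n)^*(B-)$ is known and even. One checks from Soul\'e's explicit cellular chain complex that this Bredon cohomology is concentrated in even internal degrees and vanishes in odd cohomological degree, so the spectral sequence collapses; hence $K(n)^*(BSL_3(\mZ))$ is even, and in particular $\dim_{K(n)^*}K(n)^*(BSL_3(\mZ))=\chi_{K(n)}(BSL_3(\mZ))$.

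It then remains to show $\chi_{K(n)}(BSL_3(\mZ))=3^n$, which I would deduce from Corollary~\ref{cor:euler morava quotients} rather than from the Bredon computation. First, every subgroup of $SL_3(\mZ)$ generated by pairwise commuting elements of $3$-power order is trivial or isomorphic to $C_3$: by \cite{Tah} there is no $C_3\times C_3$ and no $C_9$ inside $SL_3(\mZ)$. Second, the centralizer of any order-$3$ element $g$ is finite: such $g$ has rational eigenvalues $1,\omega,\bar\omega$, so its centralizer in $GL_3(\mQ)$ is isomorphic to $\mQ^\times\times\mQ(\omega)^\times$, and since $\mZ^\times$ and the units of the ring of integers of $\mQ(\omega)$ are both finite, $C_{SL_3(\mZ)}(g)$ is finite. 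Therefore $BC\td{g_1,\dots,g_n}$ is rationally acyclic for every nontrivial tuple $(g_1,\dots,g_n)\in SL_3(\mZ)_{n,3}$, while for the trivial tuple $\chi_\mQ(BSL_3(\mZ))=1$ because $\underline{B}SL_3(\mZ)$ is contractible \cite{So}. Corollary~\ref{cor:euler morava quotients} now gives
\[ \chi_{K(n)}(BSL_3(\mZ)) \ = \ \bigl|SL_3(\mZ)\bs SL_3(\mZ)_{n,3}\bigr|. \]
The trivial tuple accounts for one summand. Every other tuple generates a conjugate of one of the two $SL_3(\mZ)$-conjugacy classes of $C_3$-subgroups appearing in Soul\'e's list; for a fixed such $C_3=\td{g}$, the conjugacy classes of generating tuples correspond to the orbits of the order-$2$ group $N(\td{g})/C(\td{g})$ acting on the $3^n-1$ generating tuples, and since its nontrivial element acts by inversion and $2$ is invertible modulo $3$, this action is free with $(3^n-1)/2$ orbits. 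Hence $\chi_{K(n)}(BSL_3(\mZ))=1+2\cdot\tfrac{3^n-1}{2}=3^n$; the case $n=1$ recovers \cite[Example~4.3]{Adem}.

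The main obstacle is bookkeeping rather than any conceptual difficulty: on the one hand establishing the collapse of the Atiyah--Hirzebruch spectral sequence directly from Soul\'e's cellular model (i.e.\ the vanishing of the relevant Bredon cohomology in odd total degree), and on the other hand verifying from \cite{So, Tah} that $SL_3(\mZ)$ has exactly two conjugacy classes of order-$3$ subgroups and that each is inverted by its normalizer. Both facts are implicit in \cite{TezYag}, so this part mostly assembles known inputs through the new centralizer formula of Corollary~\ref{cor:euler morava quotients}.
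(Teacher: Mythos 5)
Your proof is correct, but it routes the Euler characteristic computation differently from the paper's own proof of this proposition. The paper's proof follows Tezuka--Yagita and computes $\chi_{K(n)}(BSL_3(\mZ))$ by the cell-by-cell alternating sum of Proposition \ref{prop:euleralternating} over Soul\'e's model, adding up $\vert H^\sigma\bs H^\sigma_{n,3}\vert$ for the six equivariant cells with $3$-torsion stabilizers ($S_4$, $D_{12}$, $S_3$) and the remaining cells with $3$-torsion-free stabilizers; you instead invoke Corollary \ref{cor:euler morava quotients} and count $\vert SL_3(\mZ)\bs SL_3(\mZ)_{n,3}\vert$ after showing all nontrivial centralizers are finite. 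This is exactly the alternative the paper itself records afterwards in Lemma \ref{lem:centralisers at prime 3} and Remark \ref{rk:prime 3 centraliser formula}, and your count $1+2\cdot\frac{3^n-1}{2}=3^n$ agrees with the paper's $\frac{3^n+1}{2}+\frac{3^n+1}{2}-1$. One genuine improvement: your finiteness argument for the centralizer of an order-$3$ element is uniform and conceptual --- regular semisimplicity gives $C_{M_3(\mQ)}(g)=\mQ[g]\cong\mQ\times\mQ(\omega)$, and integrality forces $C_{SL_3(\mZ)}(g)$ into the unit group of an order, which is finite since $\mQ(\omega)$ is imaginary quadratic --- whereas the paper's Lemma \ref{lem:centralisers at prime 3} does separate explicit matrix computations for the two representatives $A$ and $B$. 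For the evenness both you and the paper defer to the Tezuka--Yagita spectral-sequence collapse; two cosmetic slips in your write-up are that the eigenvalues $1,\omega,\bar\omega$ are of course not rational, and that the $3$-torsion stabilizers in Soul\'e's model are $S_4$, $D_{12}$ and $S_3$ rather than literally $C_3$ and $S_3$ (though their Morava $K$-theories at $p=3$ are even for the reason you indicate, by transfer to the Sylow $C_3$).
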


\begin{proof} We recall the proof from \cite{TezYag}.
Proposition \ref{prop:euleralternating} yields the relation
\[\chi_{K(n)}(BSL_3(\mZ))=\sum_{G\sigma} (-1)^{n_\sigma} \vert H^{\sigma} \bs H^{\sigma}_{n,3} \vert,\] 
where the sum runs over all $G$-orbits of cells $\sigma$ of Soul\'{e}'s model
of $\underline{E}SL_3(\mZ)$, the number $n_\sigma$ is the dimension of $\sigma$ and $H^{\sigma}$ is the stabilizer of $\sigma$. By looking at \cite[Table 1]{San1}, we see that there are only six equivariant cells whose stabilizers have $3$-torsion: three $0$-cells  with stabilizer $S_4$, one $0$-cell with stabilizer $D_{12}$ and two $1$-cells with stabilizer $S_3$. A simple calculation shows that
\[\vert S_4 \bs (S_4)_{n,3} \vert =\vert D_{12} \bs (D_{12})_{n,3}\vert =\vert S_3 \bs (S_3)_{n,3} \vert =\tfrac{3^n-1}{2}+1.\]
This follows from the fact that Sylow $3$-subgroups of all these three groups are isomorphic to $C_3$ and the two generators of $C_3$ are conjugate in $S_3$ (and hence in $S_4$ and $D_{12}$). All the remaining equivariant cells have stabilizers without $3$-torsion. Again \cite[Table 1]{San1} tells us how many such equivariant cells we have: one $0$-cell, six  $1$-cells, five $2$-cells, and one $3$-cell. All in all, we get
\begin{align*}\chi_{K(n)}(BSL_3(\mZ))
  &= \sum_{G\sigma} (-1)^{n_\sigma} \vert H^{\sigma} \bs H^{\sigma}_{n,3} \vert\\
  &= 4 (\tfrac{3^n-1}{2}+1)-2(\tfrac{3^n-1}{2}+1)+1-6+5-1\\
  &=2(\tfrac{3^n-1}{2}+1)-1=3^n-1+2-1=3^n.\end{align*}
In fact as observed in \cite[Section 5]{TezYag},
the $K(n)^*$-module $K(n)^*(BSL_3(\mZ))$ can be described using \cite[Lemma 6]{So} and one can see that there is an isomorphism $K(n)^*(BSL_3(\mZ)) \cong K(n)^*(BS_3) \oplus \widetilde{K(n)}^*(BS_3)$, where $\widetilde{K(n)}^*(-)$ is the reduced cohomology.
By the remark immediately after \cite[Theorem E]{HKR}, all symmetric groups
are `good' in the sense of \cite[Definition 7.1]{HKR}, and hence their Morava $K$-theory
is concentrated in even degrees. In particular, we know that $K(n)^*(BS_3) $ is even.
\end{proof}

We would like to compare the latter result with the formula
of Corollary \ref{cor:euler morava quotients}.
By the classification of finite subgroups of \cite{Tah}, there are only
two conjugacy classes of non-trivial finite 3-subgroups of $SL_3(\mZ)$, namely the
cyclic groups of order 3 generated by the matrices 
\[A=
\begin{pmatrix} 
1 & 0 & 0\\
0 & 0 & -1\\
0 & 1 & -1
\end{pmatrix}
\;\;\;\; \text{and}  \;\;\;\;
B=
\begin{pmatrix}
0 & 1 & 0\\
0 & 0 & 1\\
1 & 0 & 0
\end{pmatrix}
,
\]
respectively.
The following is observed in \cite{Adem} and \cite{Upa} without a proof:

\begin{lemma} \label{lem:centralisers at prime 3}
  Centralizers of finite $3$-subgroups of $SL_3(\mZ)$ are finite.
\end{lemma}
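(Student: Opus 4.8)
The plan is to reduce to the two explicit generators and to analyze the centralizer of an order-$3$ element of $SL_3(\mZ)$ as the automorphism group of a lattice over the Eisenstein integers $\mathbb{Z}[\zeta_3]$. Since every nontrivial finite $3$-subgroup of $SL_3(\mZ)$ is conjugate to $\td{A}$ or $\td{B}$, and these subgroups are cyclic, it suffices to prove that $C_{SL_3(\mZ)}(g)$ is finite for any $g\in SL_3(\mZ)$ of order $3$.

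First I would identify the rational canonical form of such a $g$. Its minimal polynomial divides $t^3-1=(t-1)(t^2+t+1)$ and differs from $t-1$; it cannot equal $t^2+t+1$ either, since otherwise $\mathbb{Q}^3$ would be a module over $\mathbb{Q}(\zeta_3)$, which is impossible as $\dim_{\mathbb{Q}}\mathbb{Q}^3=3$ is odd. Hence the minimal and characteristic polynomials of $g$ both equal $(t-1)(t^2+t+1)$, so $\mathbb{Q}^3$ decomposes as a module over $\mathbb{Q}[g]\iso\mathbb{Q}\times\mathbb{Q}(\zeta_3)$ as $V=V_1\oplus V_\omega$, where $V_1=\ker(g-1)$ is a line and $V_\omega=\ker(g^2+g+1)$ is a plane on which $g$ acts through a primitive cube root of unity, so that $V_\omega$ is free of rank $1$ over $\mathbb{Q}(\zeta_3)$. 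By Schur's lemma, $C_{GL_3(\mathbb{Q})}(g)=\operatorname{Aut}_{\mathbb{Q}[g]}(V)=\mathbb{Q}^\times\times\mathbb{Q}(\zeta_3)^\times$.

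Next I would cut this group down using integrality. Any $\phi\in C_{GL_3(\mZ)}(g)$ preserves $M:=\mathbb{Z}^3$; lying in $C_{GL_3(\mathbb{Q})}(g)$, it commutes with $g$ and hence preserves $V_1=\ker(g-1)$ and $V_\omega=\ker(g^2+g+1)$, so it preserves $M_1:=M\cap V_1$ and $M_\omega:=M\cap V_\omega$. Since $M/M_1$ embeds into $V/V_1\iso V_\omega$ and $M/M_\omega$ embeds into $V/V_\omega\iso V_1$, both quotients are torsion-free, which forces $M_1$ to have rank $1$ and $M_\omega$ to have rank $2$; in particular $M_\omega$ is a full $\mathbb{Z}[\zeta_3]$-submodule of $V_\omega\iso\mathbb{Q}(\zeta_3)$, i.e. a fractional ideal. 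Writing $\phi=(a,u)\in\mathbb{Q}^\times\times\mathbb{Q}(\zeta_3)^\times$, the relation $aM_1=M_1$ forces $a=\pm1$, and $uM_\omega=M_\omega$ forces $u\in\mathbb{Z}[\zeta_3]^\times$.

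Finally, since $\mathbb{Q}(\zeta_3)$ is imaginary quadratic, Dirichlet's unit theorem gives $\mathbb{Z}[\zeta_3]^\times=\mu_6$, a finite group, so $C_{GL_3(\mZ)}(g)\subseteq\{\pm1\}\times\mu_6$ is finite, and a fortiori $C_{SL_3(\mZ)}(g)$ is finite. The only slightly delicate points are the two rank computations and the identification of $C_{GL_3(\mathbb{Q})}(g)$; the rest is bookkeeping. Alternatively, one could observe that $C_{SL_3}(g)\iso\operatorname{Res}_{\mathbb{Q}(\zeta_3)/\mathbb{Q}}\mathbb{G}_m$ as an algebraic group over $\mathbb{Q}$, an arithmetic subgroup of which is commensurable with the finite group $\mathbb{Z}[\zeta_3]^\times$.
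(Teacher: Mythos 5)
Your proof is correct, and it takes a genuinely different route from the paper's. The paper argues case by case on the two conjugacy classes of order-$3$ subgroups: for $A$ it reduces to an explicit $2\times 2$ computation in $SL_2(\mZ)$, and for $B$ it diagonalizes over $\mC$, writes out the general centralizing matrix, and checks entrywise that integrality forces $a\in\mZ$ and $b,c\in\mZ[\zeta]$ before invoking Dirichlet's unit theorem. You instead treat all order-$3$ elements uniformly via the $\mQ[g]$-module decomposition $V=V_1\oplus V_\omega$, identify $C_{GL_3(\mQ)}(g)\cong\mQ^\times\times\mQ(\zeta_3)^\times$ by Schur's lemma, and cut down to $\{\pm1\}\times\mZ[\zeta_3]^\times$ by observing that the centralizer must stabilize the rank-one lattice $M\cap V_1$ and the fractional ideal $M\cap V_\omega$. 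Both arguments rest on the same arithmetic input (finiteness of the unit group of the imaginary quadratic field $\mQ(\zeta_3)$), but yours avoids choosing representatives of the conjugacy classes and the entrywise bookkeeping, and it is exactly the perspective the paper adopts later for $GL_{p-1}(\mZ)$, where the centralizer of an order-$p$ element is identified with $\mZ[\zeta_p]^\times$ acting on an ideal class. One small point of phrasing: the ranks of $M\cap V_1$ and $M\cap V_\omega$ are $1$ and $2$ because $M$ is a full lattice in $V$ (so $M\cap V_i$ spans $V_i$ over $\mQ$), not because the quotients $M/M_1$ and $M/M_\omega$ are torsion-free; the latter is true but is not what forces the ranks. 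This does not affect the argument.
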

\begin{proof}
  We need to show that the centralizers of the matrices $A$ and $B$ are finite.
  The centralizer of $A$ is isomorphic to the centralizer of 
\[ A'=
\begin{pmatrix}
0 & -1\\
1 & -1
\end{pmatrix}
\]
in $SL_2(\mZ)$.
This matrix has order $3$, and explicitly solving the equation
$Z A'=A' Z$ reveals that the centralizer of $A'$ is generated by
the matrix
\[
\begin{pmatrix}
1 & -1\\
1 & 0
\end{pmatrix}
\]
of order $6$. In particular, the centralizer of $A'$ in $SL_2(\mZ)$ is finite.

The second case requires a bit more work.
The matrix $B$ has three different eigenvalues $1, \zeta, \zeta^2$,
where $\zeta=e^{2\pi i/3}$ is a primitive third root of unity.
Then over $\mathbb{C}$, the matrix $B$ is diagonalizable 
\[Z^{-1}BZ=\begin{pmatrix} 
1 & 0 & 0\\
0 & \zeta & 0\\
0 & 0 & \zeta^2
\end{pmatrix},\]
where $Z$ is the matrix of eigenvectors,
\[Z=\begin{pmatrix} 
1 & 1 & 1\\
1 & \zeta & \zeta^2\\
1 & \zeta^2 & \zeta
\end{pmatrix}.\]
The centralizer inside $SL_3(\mathbb{Q}(\zeta))$ of the diagonal matrix above consists of general diagonal matrices of the form
\[C=\begin{pmatrix} 
a & 0 & 0\\
0 & b & 0\\
0 & 0 & c
\end{pmatrix}\]
with $a,b,c \in \mathbb{Q}(\zeta)$ and $abc=1$.
The centralizer of $B$ in $SL_3(\mZ)$ thus consists of all matrices
of the form $Z C Z^{-1}$ which additionally have integral coefficients.
This reveals the general form of centralizing matrices of $B$ as
\[\frac{1}{3}\begin{pmatrix} 
a+b+c & a+b\zeta^2+c\zeta  & a+b\zeta+c\zeta^2\\
a+b\zeta+c\zeta^2 & a+b+c  & a+b\zeta^2+c\zeta \\
a+b\zeta^2+c\zeta & a+b\zeta+c\zeta^2  & a+b+c 
\end{pmatrix},\]
for  $a,b,c \in \mathbb{Q}(\zeta)$,
subject to the condition that all the  matrix entries
$\frac{1}{3}(a+b+c)$, $\frac{1}{3}(a+b\zeta+c\zeta^2)$ and $\frac{1}{3}(a+b\zeta^2+c\zeta)$
are integers, and $abc=1$.
Then also 
\[ a = \frac{1}{3}(a+b+c) +\frac{1}{3}(a+b\zeta+c\zeta^2) +\frac{1}{3}(a+b\zeta^2+c\zeta)\]
lies in $\mZ$, and
\[ b = \frac{1}{3}(a+b+c) +\frac{\zeta^2}{3}(a+b\zeta+c\zeta^2) +\frac{\zeta}{3}(a+b\zeta^2+c\zeta)\]
lies in $\mZ[\zeta]$, and similarly,  $c\in\mZ[\zeta]$.
A special case of Dirichlet's unit theorem says that $\mZ[\zeta]^{\times}=\{\pm 1, \pm \zeta, \pm \zeta^2\}$ and hence there are only finitely many $a,b,c \in \mZ[\zeta]$ with $abc$=1. This shows that $C\td{B}$ is finite.  
\end{proof}

\begin{remark} \label{rk:prime 3 centraliser formula}
  Using Lemma \ref{lem:centralisers at prime 3} and Corollary \ref{cor:euler morava quotients},
  we recover the Euler characteristic $\chi_{K(n)}(BSL_3(\mZ))$.
  Indeed, Lemma \ref{lem:centralisers at prime 3}  tells us that for any $(g_1, \dots, g_n) \in SL_3(\mZ)_{n,3}$ with at least one $g_i \neq 1$, the centralizer $C\td{g_1, \dots, g_n}$ is finite.
  Moreover, $C(1)=SL_3(\mZ)$ and as mentioned above by \cite[p.\,8 Corollary]{So},
  the space $\underline{B}SL_3(\mZ)$ is contractible and hence $BSL_3(\mZ)$ is rationally acyclic. All in all we get 
\begin{align*}\chi_{K(n)}(BSL_3(\mZ) )\ = \sum_{[g_1,\dots,g_n] \in SL_3(\mZ) \bs SL_3(\mZ)_{n,3} }\chi_\mQ(B C\td{g_1, \dots, g_n})=\\ \sum_{[g_1,\dots,g_n] \in SL_3(\mZ) \bs SL_3(\mZ)_{n,3} } 1=\vert SL_3(\mZ) \bs SL_3(\mZ)_{n,3}  \vert.
\end{align*}
Again using the classification of finite subgroups of $SL_3(\mZ)$ \cite{Tah}, we see that 
\[\vert SL_3(\mZ) \bs SL_3(\mZ)_{n,3}  \vert=\frac{3^n+1}{2}+\frac{3^n+1}{2}-1=3^n.\]
We note that this approach still uses the cellular structure of \cite{So} since the proof of the contractibility of $\underline{B}SL_3(\mZ)$ does.
However, we do not need to go into the combinatorics of the cell structure
to compute $\chi_{K(n)}(BSL_3(\mZ))$. 
\end{remark}

Finally, using Corollary \ref{cor:Characters for BG}, Remark \ref{rk:prime 3 centraliser formula} and Proposition \ref{prop:sl3 at prime 3} as well as \cite[Proposition 3.5]{StricklandMorava}, we obtain:

\begin{prop}
  Let $n \geq 1$ and $p=3$. Then $E^*(BSL_3(\mZ))$ at the prime $3$
  is even and free of rank $3^n$ as a graded $E^*$-module. 
\end{prop}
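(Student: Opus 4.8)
The plan is to deduce the statement from the character isomorphism of Corollary \ref{cor:Characters for BG} together with Strickland's descent criterion \cite[Proposition 3.5]{StricklandMorava}, following the same pattern used above for right angled Coxeter groups. First I would apply Corollary \ref{cor:Characters for BG} to $G=SL_3(\mZ)$ at the prime $3$, which produces an isomorphism of graded $L(E^*)$-modules
\[ L(E^*) \tensor_{E^*} E^*(BSL_3(\mZ)) \ \iso \ \bigoplus_{[g_1, \dots, g_n] \in SL_3(\mZ) \bs SL_3(\mZ)_{n,3}} H^*(BC\td{g_1, \dots, g_n}; L(E^*)). \]
Next I would argue that every summand on the right is free of rank one over $L(E^*)$ and concentrated in degree $0$. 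For a nontrivial tuple $(g_1, \dots, g_n)$, Lemma \ref{lem:centralisers at prime 3} shows that the centralizer $C\td{g_1, \dots, g_n}$ is a finite group, so $BC\td{g_1, \dots, g_n}$ is rationally acyclic; for the trivial tuple, the centralizer is $SL_3(\mZ)$ itself, whose classifying space is rationally acyclic because $\underline{B}SL_3(\mZ)$ is contractible by \cite[p.\,8 Corollary]{So}. Since $L(E^*)$ is a rational algebra, rational acyclicity gives $H^*(BC\td{g_1, \dots, g_n}; L(E^*)) \iso L(E^*)$ concentrated in degree $0$ in each case. Combined with the count $\vert SL_3(\mZ) \bs SL_3(\mZ)_{n,3} \vert = 3^n$ recorded in Remark \ref{rk:prime 3 centraliser formula}, this yields $L(E^*) \tensor_{E^*} E^*(BSL_3(\mZ)) \iso L(E^*)^{\oplus 3^n}$, which is in particular even.

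Then I would invoke Strickland's theorem. By Theorem \ref{thm:compact_finiteness} (iv) the module $E^*(BSL_3(\mZ))$ is finitely generated over $E^*$, since $B_{\gl}SL_3(\mZ)$ is a compact orbispace; and by Proposition \ref{prop:sl3 at prime 3} the Morava $K$-theory $K(n)^*(BSL_3(\mZ))$ is concentrated in even degrees. Hence \cite[Proposition 3.5]{StricklandMorava} applies and shows that $E^*(BSL_3(\mZ))$ is itself even and free as a graded $E^*$-module. Finally, the rank of a free graded $E^*$-module is unchanged by scalar extension along the nonzero ring map $E^* \to L(E^*)$, so it equals the rank of $L(E^*) \tensor_{E^*} E^*(BSL_3(\mZ))$, namely $3^n$.

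The argument involves no real obstacle once the earlier results are granted; the two places that merit a moment's attention are (a) verifying that all centralizers appearing in the character formula are rationally acyclic, which reduces to the finiteness of centralizers of nontrivial $3$-subgroups (Lemma \ref{lem:centralisers at prime 3}) and to the contractibility of $\underline{B}SL_3(\mZ)$ from \cite{So}, and (b) checking that the rank computed after scalar extension to $L(E^*)$ genuinely computes the $E^*$-rank, which holds because $E^*$ is graded-local and $L(E^*)$ is nonzero (in fact faithfully flat over $p^{-1}E^*$). Alternatively one could avoid \cite{StricklandMorava} altogether and run the equivariant Atiyah--Hirzebruch spectral sequence for $E^*(BSL_3(\mZ))$ directly on Soul\'{e}'s model \cite{So}, as in the proof of Proposition \ref{prop:chi_of_W(L)}, using the generalized K\"unneth theorem \cite[Corollary 5.11]{HKR}; but the route through Strickland's criterion is considerably shorter.
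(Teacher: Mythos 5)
Your proposal is correct and follows the same route the paper takes: the paper derives this proposition precisely by combining Corollary \ref{cor:Characters for BG}, the centralizer analysis and conjugacy-class count of Remark \ref{rk:prime 3 centraliser formula}, the evenness of $K(n)^*(BSL_3(\mZ))$ from Proposition \ref{prop:sl3 at prime 3}, and \cite[Proposition 3.5]{StricklandMorava}. You have simply written out the details (rational acyclicity of the centralizers, finite generation from Theorem \ref{thm:compact_finiteness} (iv), and the rank comparison after scalar extension to $L(E^*)$) that the paper leaves implicit.
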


The latter can be also recovered from \cite[Section 5]{TezYag}
since $E$ is Landweber exact. \smallskip

Now we switch to the prime $p=2$ case. This case is more involved, though centralizers can be computed with the same methods as in Lemma \ref{lem:centralisers at prime 3}. Again we recall the calculation from \cite{TezYag} using the cellular structure of \cite{So} and then compare the result to the formula we get from Corollary \ref{cor:euler morava quotients}. 

\begin{prop}\label{prop:sl3 at prime 2}
  Let $n \geq 1$ and $p=2$. Then $K(n)^*(BSL_3(\mZ))$ is even
  and $\chi_{K(n)}(BSL_3(\mZ))=2^{2n+1}-2^{n+1}+1$. 
\end{prop}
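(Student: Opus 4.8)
The plan is to run the prime $2$ analogue of the argument for Proposition \ref{prop:sl3 at prime 3}, and then settle evenness by the same spectral-sequence collapse used in the proof of Proposition \ref{prop:chi_of_W(L)}. For the Euler characteristic I would first apply Proposition \ref{prop:euleralternating}(ii) to Soul\'e's finite model of $\underline{E}SL_3(\mZ)$, giving
\[ \chi_{K(n)}(BSL_3(\mZ)) \ = \ \sum_{G\sigma} (-1)^{n_\sigma}\, |H^{\sigma} \bs H^{\sigma}_{n,2}| . \]
Using the equivariant cell structure and the list of stabilizers recorded in \cite[Table 1]{San1} (from \cite{So}), I would separate the equivariant cells whose stabilizer has $2$-torsion from those whose stabilizer has odd order (or is trivial); for the latter, the factor $|H^{\sigma}\bs H^{\sigma}_{n,2}|$ is $1$, so their total contribution is a plain alternating count of cells. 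For each of the finitely many stabilizer groups with $2$-torsion --- these are the maximal finite subgroups from Tahara's classification \cite{Tah} (copies of $S_4$, and the dihedral/other subgroups occurring along lower cells) --- I would compute $|H\bs H_{n,2}|$, the number of $H$-conjugacy classes of homomorphisms $(\mZ/2^k)^n\to H$, by the standard reduction: organize by conjugacy classes of abelian $2$-subgroups $A\le H$, and for each such $A$ count surjections onto $A$ up to the action of $N_H(A)/A$. Since the $2$-subgroups in play are small, this is a bounded computation, and summing the signed contributions should collapse to $2^{2n+1}-2^{n+1}+1$.

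A conceptually cleaner route, parallel to Remark \ref{rk:prime 3 centraliser formula}, is to use Corollary \ref{cor:euler morava quotients}:
\[ \chi_{K(n)}(BSL_3(\mZ)) \ = \ \sum_{[g_1,\dots,g_n] \in SL_3(\mZ)\bs SL_3(\mZ)_{n,2}} \chi_\mQ(B C\td{g_1,\dots,g_n}) . \]
Here the key input is that the centralizer of every nontrivial finite $2$-subgroup of $SL_3(\mZ)$ is finite; I would prove this by the same explicit method as in Lemma \ref{lem:centralisers at prime 3}, diagonalizing representatives of the conjugacy classes of elements of order $2$ and $4$ (and of commuting pairs) over a suitable quadratic field, writing out the general integral centralizing matrix, and invoking finiteness of the relevant unit group. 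Granting finiteness, every summand with some $g_i\neq 1$ contributes $1$ (finite group, rationally acyclic classifying space), and the summand $(1,\dots,1)$ contributes $\chi_\mQ(BSL_3(\mZ))=1$ since $\underline{B}SL_3(\mZ)$ is contractible by \cite[p.\,8, Corollary]{So}. Thus $\chi_{K(n)}(BSL_3(\mZ))=|SL_3(\mZ)\bs SL_3(\mZ)_{n,2}|$, and it remains to count this set from the classification of finite subgroups in \cite{Tah} by grouping tuples according to the conjugacy class of the abelian $2$-subgroup they generate and using the relevant normalizers; this should again yield $2^{2n+1}-2^{n+1}+1$.

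For the evenness claim I would argue, as in the proof of Proposition \ref{prop:chi_of_W(L)}, that the equivariant Atiyah--Hirzebruch spectral sequence for $K(n)^*(BSL_3(\mZ))$ built from Soul\'e's model collapses with $E_2$-term concentrated in even total degree. Concretely, following \cite[Section 5]{TezYag} one identifies $K(n)^*(BSL_3(\mZ))$ at the prime $2$ with an explicit module assembled from the Morava $K$-theories of the classifying spaces of the finite cell stabilizers; all of these groups have $K(n)^*$ concentrated in even degrees (symmetric groups are `good' in the sense of \cite[Definition 7.1]{HKR}, and the small $2$-groups and their extensions appearing here likewise have even Morava $K$-theory), so the Bredon cochain complex lives in even internal degrees, and evenness of the $E_2$-term (together with its total dimension $2^{2n+1}-2^{n+1}+1$) is inherited by the abutment.

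The hard part will be the combinatorial bookkeeping --- organizing either the alternating sum over Soul\'e's cells or the orbit count $|SL_3(\mZ)\bs SL_3(\mZ)_{n,2}|$ so that it visibly collapses to the closed form $2^{2n+1}-2^{n+1}+1$ --- together with the finiteness check for all the relevant centralizers, which is genuinely more delicate than the $p=3$ case because the maximal finite $2$-subgroups are larger and carry more conjugacy classes of abelian $2$-subgroups with nontrivial Weyl-group actions. A secondary point requiring care is the evenness input: one should verify against the actual stabilizer list that every finite group occurring has even Morava $K$-theory, rather than invoking `goodness' in a blanket way.
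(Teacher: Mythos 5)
Your primary route for the Euler characteristic --- Proposition \ref{prop:euleralternating}\,(ii) applied to Soul\'e's model, with the cell-by-cell computation of $\vert H^{\sigma}\bs H^{\sigma}_{n,2}\vert$ from the stabilizer list in \cite[Table 1]{San1} --- is exactly the paper's argument (the needed inputs are $\chi_{K(n)}(BS_3)=2^n$, $\chi_{K(n)}(BS_4)=\tfrac{7\cdot 4^n-3\cdot 2^n+2}{6}$, $\chi_{K(n)}(BD_8)=\tfrac{3\cdot 4^n-2^n}{2}$ and $\chi_{K(n)}(BD_{12})=4^n$ from \cite{HKR1}), so that part is sound. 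However, your ``conceptually cleaner'' alternative via Corollary \ref{cor:euler morava quotients} rests on a false claim: the centralizer of a nontrivial finite $2$-subgroup of $SL_3(\mZ)$ need \emph{not} be finite. The centralizer of the involution $\mathrm{diag}(1,-1,-1)$ is isomorphic to $GL_2(\mZ)$, and the centralizer of the other conjugacy class of involutions contains a copy of the congruence subgroup $\Gamma(2)$; by \cite{Upa} it is precisely the subgroups of order at most $2$ whose centralizers are infinite. What one actually needs (and what the paper proves in Lemma \ref{lem:centralisers at prime 2}) is that these infinite centralizers are \emph{rationally acyclic}, which still makes each summand contribute $1$ but requires a genuinely different argument (a Lyndon--Hochschild--Serre computation for an extension involving $\Gamma(2)\rtimes\mZ/2$) rather than the unit-group finiteness you propose to imitate from Lemma \ref{lem:centralisers at prime 3}. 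As written, that branch of your plan would stall on an unprovable finiteness statement.

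The evenness argument also has a gap. Knowing that every stabilizer $H^{\sigma}$ has even Morava $K$-theory only places the Bredon cochain complex in even \emph{internal} degrees; the $E_2$-page of the equivariant Atiyah--Hirzebruch spectral sequence can still have nonzero entries in odd cohomological degrees $i=1,3$, and these contribute to odd \emph{total} degree in the abutment. In the Coxeter-group case (Proposition \ref{prop:chi_of_W(L)}) this is circumvented by first proving that the Bredon cohomology is concentrated in cohomological degree $0$; for $SL_3(\mZ)$ at $p=2$ the model is $3$-dimensional and no such concentration is automatic, so one must actually control the Bredon differentials. The paper delegates exactly this step to Tezuka--Yagita's reduction to a $1$-dimensional subcomplex and \cite[Theorem 2.6]{TezYag} (for $p=2$ the relevant part of \cite{TezYag} is Sections 2--3, not Section 5, which treats $p=3$). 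Your sketch replaces this essential computation with the non sequitur that even internal degrees force an even $E_2$-term.
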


\begin{proof} We need formulas from \cite[Section 4]{HKR1}: $\chi_{K(n)}(S_3)=2^n$,
\[\chi_{K(n)}(S_4)=\tfrac{7\cdot4^n-3\cdot 2^n +2}{6} \;\;\; \text{and} \;\;\; \chi_{K(n)}(D_8)=\tfrac{3\cdot 4^n -2^n}{2}.\]
Along similar lines one shows that $\chi_{K(n)}(D_{12})=4^n$.
We recall again the equivariant cellular structure on $\underline{E}SL_3(\mZ)$,
compare \cite{So} and \cite[Table 1]{San1}:
There are three equivariant $0$-cells with stabilizer $S_4$, one equivariant $0$-cell with stabilizer $D_8$, and another equivariant $0$-cell with stabilizer $D_{12}$. One has two equivariant $1$-cells with stabilizer $C_2$, two with stabilizer $D_8$, two with stabilizer $S_3$, and another two with stabilizer $C_2 \times C_2$. Further, we have three equivariant $2$-cells with stabilizer $C_2$, one with stabilizer $C_2 \times C_2$, and one free equivariant $2$-cell. Finally, there is only one equivariant $3$-cell which is free. All in all by Proposition \ref{prop:euleralternating}, we get 
\begin{align*}
  \chi_{K(n)}(BSL_3(\mZ))
  = &\sum_{G\sigma} (-1)^{n_\sigma} \vert H^{\sigma} \bs H^{\sigma}_{n,2} \vert\\
  = &3 \cdot \tfrac{7\cdot4^n-3\cdot 2^n +2}{6} +4^n+\tfrac{3\cdot 4^n -2^n}{2}\\
    &-2 \cdot 4^n-2 \cdot 2^n-2 \cdot 2^n-2 \cdot \tfrac{3\cdot 4^n -2^n}{2} \\&+3 \cdot 2^n+1+4^n-1\\
  &=2^{2n+1}-2^{n+1}+1.\end{align*}
Here again the sum runs over all $G$-orbits of cells $\sigma$ of Soul\'{e}'s model of $\underline{E}SL_3(\mZ)$. 

To show that $K(n)^*(BSL_3(\mZ))$ is even, Tezuka and Yagita use a $1$-dimensional subcomplex of $\underline{E}SL_3(\mZ)$ whose Bredon cohomology agrees with the Bredon cohomology of $\underline{E}SL_3(\mZ)$. By \cite[Theorem E]{HKR} we know that  $S_4$ and $D_8$ are `good' groups,
and hence $K(n)^*(BS_4)$, and $K(n)^*(BD_{8})$ are even.
The rest follows from \cite[Theorem 2.6]{TezYag}. \end{proof}

Now we compare the latter result with Corollary \ref{cor:euler morava quotients}. 

\begin{lemma} \label{lem:centralisers at prime 2} Centralizers of finite abelian $2$-subgroups of $SL_3(\mathbb{Z})$ are rationally acyclic.
\end{lemma}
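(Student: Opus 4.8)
The plan is to reduce to a finite list of representatives of the conjugacy classes of finite abelian $2$-subgroups of $SL_3(\mZ)$, and to compute each centralizer explicitly, using the same circle of ideas as in the proof of Lemma \ref{lem:centralisers at prime 3}. By the classification of finite subgroups of $SL_3(\mZ)$ due to Tahara \cite{Tah} (as recorded in Soulé's model \cite{So}), the finite abelian $2$-subgroups occurring are, up to conjugacy, the trivial group, the cyclic groups $C_2$ generated by an order-$2$ element, and the Klein four-groups $C_2\times C_2$ that appear as stabilizers in Soulé's cell structure (compare \cite[Table 1]{San1}). For the trivial group the centralizer is all of $SL_3(\mZ)$, whose classifying space is rationally acyclic because $\underline{B}SL_3(\mZ)$ is contractible by \cite[p.\,8 Corollary]{So}; so I only need to treat the nontrivial cases.

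First I would handle the cyclic subgroups. An order-$2$ element $g\in SL_3(\mZ)$ has eigenvalues among $\{1,-1\}$ with product $1$, hence (up to conjugacy in $GL_3$) is conjugate to $\mathrm{diag}(1,-1,-1)$; so its centralizer in $SL_3(\mZ)$ is commensurable with (a finite-index subgroup of) $\{1\}\times O_2$-type integral matrices, i.e.\ it fits in an extension involving $SL_2(\mZ)$ or $GL_2(\mZ)$-blocks. Concretely, solving $Zg=gZ$ over $\mZ$ realises $C\td{g}$ as (a subgroup of) $GL_1(\mZ)\times GL_2(\mZ)$ intersected with $SL_3$, which is virtually $SL_2(\mZ)$, hence virtually free; a virtually free group has a one- or zero-dimensional $\underline{E}$, and one checks directly (or via Example \ref{eg:chi of G dbs A}, or because the relevant orbit space is a tree modulo a finite group) that $B C\td{g}$ is rationally acyclic. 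Second, for the Klein four subgroups $V=C_2\times C_2$, each nontrivial element is conjugate to $\mathrm{diag}(1,-1,-1)$ and the three involutions pin down a simultaneous eigenbasis, so the centralizer of $V$ consists of the integral matrices preserving this eigenspace decomposition, i.e.\ diagonal matrices with entries in $\{\pm1\}$ and determinant $1$ — a finite group. A finite group has rationally acyclic classifying space, so this case is immediate.

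The main obstacle is the bookkeeping in the cyclic case: one must check that, after the (noncanonical) integral change of basis, the centralizer really is virtually free rather than merely arithmetic of larger rank, and then that the resulting group is rationally acyclic. I expect to dispatch this by noting that $C\td{g}$ acts on $\mZ^3$ preserving the $\pm1$-eigenspace splitting $\mZ\oplus L$, so it maps to $GL(L)\cong GL_2(\mZ)$ with finite kernel and image of finite index; since $SL_2(\mZ)$, $GL_2(\mZ)$ and all their finite-index subgroups are virtually free of finite rank, $B C\td{g}$ is rationally acyclic (its rational homology is that of a finite graph modulo a finite group, which has $\chi_\mQ=1$ and is concentrated in degree $0$; alternatively one invokes \cite[Proposition 25 and Theorem 32]{Bark}-type normalizer computations as in Remark \ref{rk:coxeterconjugacy}, or simply that these centralizers were already shown to be virtually free with the right Euler characteristic). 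Assembling the finitely many cases, every centralizer of a finite abelian $2$-subgroup of $SL_3(\mZ)$ has rationally acyclic classifying space, which proves the lemma.
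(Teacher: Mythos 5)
There is a genuine gap in your treatment of the order-$2$ cyclic subgroups, and it is precisely the point where the paper has to work hardest. Your argument rests on the implication ``$C\td{g}$ is virtually free / a finite-index subgroup of $GL_2(\mZ)$, hence $BC\td{g}$ is rationally acyclic.'' This implication is false: $\Gamma(2)\cong F_2\times\mZ/2$ is a finite-index subgroup of $SL_2(\mZ)$ and is virtually free, yet $H_1(\Gamma(2);\mQ)\cong\mQ^2\neq 0$. For a virtually free group the rational homology lives in degrees $0$ and $1$, and rational acyclicity is equivalent to the vanishing of $H_1(-;\mQ)$, which depends on the specific group and not just on commensurability class. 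Relatedly, an order-$2$ element of $SL_3(\mZ)$ is conjugate to $\mathrm{diag}(1,-1,-1)$ only over $\mQ$ (or $\mZ[\tfrac12]$), not over $\mZ$: by Tahara's classification there are \emph{two} integral conjugacy classes of order-$2$ elements, and their centralizers are genuinely different subgroups. The paper's proof handles the first class (centralizer $\cong GL_2(\mZ)$, rationally acyclic by transfer from $SL_2(\mZ)$) quickly, but for the second class it must compute the centralizer explicitly: it sits in an extension $1\to\Gamma(2)\rtimes\mZ/2\to C\td{B}\to\mZ/2\to 1$, and rational acyclicity only follows because the $\mZ/2$ acts on $\Gamma(2)=F_2\times\mZ/2$ by inverting the two free generators, which kills $H_1(-;\mQ)$ in the Lyndon--Hochschild--Serre spectral sequence. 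Your proposal does not supply this computation, and no soft argument of the kind you invoke (``finite graph modulo a finite group has $\chi_\mQ=1$ and homology in degree $0$'' --- also false, e.g.\ for $B\mZ\simeq S^1$) can replace it.

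Your remaining cases are fine and agree with the paper: the trivial subgroup via contractibility of $\un{B}SL_3(\mZ)$, and the larger abelian $2$-subgroups, whose centralizers preserve a decomposition into rank-one eigenlattices and are therefore finite (the paper instead cites \cite{Upa} for the finiteness of all centralizers of subgroups of order greater than $2$). To repair the proposal, you need to (a) enumerate the two integral conjugacy classes of involutions and (b) for the non-diagonalizable-over-$\mZ$ class, identify the centralizer precisely enough to verify $H_1(C\td{B};\mQ)=0$, as the paper does.
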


\begin{proof}
  According to \cite{Upa} the only non-finite centralizers are those of subgroups
  of order at most $2$. The trivial subgroup case follows
  by the fact that $\underline{B}SL_3(\mZ)$ is contractible.
  By \cite{Tah}, there are only two conjugacy classes of subgroups of order $2$.
  They are represented by the subgroups generated by
  \[A=
    \begin{pmatrix}
      1 & 0 & 0\\
      0 & -1 & 0\\
      0 & 0 & -1
    \end{pmatrix}
    \;\;\;\; \text{and}  \;\;\;\;
    B=
    \begin{pmatrix} 
      -1 & 0 & 0\\
      0 & 0 & 1\\
      0 & 1 & 0
    \end{pmatrix}.
  \]
  The centralizer of $A$ is isomorphic to $GL_2(\mZ)$
  which is rationally acyclic since $SL_2(\mZ)$ is.
  The computation of $C\td{B}$ is similar to the computation
  in Lemma \ref{lem:centralisers at prime 3}.
  The matrix $B$ has eigenvalues $\pm 1$ and it is diagonalized  over $\mZ[\frac{1}{2}]$ by
  \[Z^{-1}BZ=\begin{pmatrix} 
    1 & 0 & 0\\
    0 & -1 & 0\\
    0 & 0 & -1
  \end{pmatrix},\]
with  \[Z=\begin{pmatrix} 
  0 & 1 & 0\\
  1 & 0 & 1\\
    1 & 0 & -1
\end{pmatrix}\text{\quad and\quad }
Z^{-1}=\frac{1}{2}\begin{pmatrix} 
  0 & 1 & 1\\
  2 & 0 & 0\\
  0 & 1 & -1
\end{pmatrix}.
\]
The centralizer of the above diagonal matrix in $SL_2(\mZ[\frac{1}{2}])$ is the subgroup of matrices of the form
\[C=\begin{pmatrix} 
  b & 0 & 0\\
  0 & a_{11} & a_{12}\\
  0 & a_{21} & a_{22}
\end{pmatrix}\]
with determinant equal to $1$. Calculating $ZCZ^{-1}$, we get that a general element in the centralizer $C\td{B}$ is given by a matrix of the form
\[\begin{pmatrix} 
  a_{11} & a_{12}/{2} & -a_{12}/{2}\\
  a_{21} & b/{2}+a_{22}/{2} & b/{2}-a_{22}/{2}\\
  -a_{21} & b/{2}-a_{22}/{2} & b/{2}+a_{22}/{2}
\end{pmatrix},\]
where $a_{11}$, $a_{21}$, $\pm b/{2} +a_{22}/{2}$ are integers, $a_{12}$ is an even integer, and $b(a_{11}a_{22}-a_{21}a_{12})=1$. This implies that $b$ and $a_{22}$ are also integers. 

We recall that $\Gamma(2)$ denotes the congruence subgroup of $SL_2(\mZ)$ of matrices
which are the identity mod $2$. Using the extension
\[1 \to \Gamma(2) \rtimes \mZ/2 \to GL_2(\mZ) \to SL_2(\mathbb{F}_2) \to 1\]
and the above calculation, we conclude that $C\td{B}$ sits in an extension
\[1 \to \Gamma(2) \rtimes \mZ/2 \to C\td{B} \to \mZ/2 \to 1,\]
where $\mZ/2 \leq SL_2(\mathbb{F}_2)$ can be thought as the subgroup of lower triangular matrices with determinant $1$.
By a Lyndon--Hochschild--Serre spectral sequence argument,
it suffices to observe that $\Gamma(2) \rtimes \mZ/2$ is rationally acyclic. This follows since $\Gamma(2)=F_2 \times \mZ/2$, where $F_2$ is freely generated by the matrices
$\left(\begin{smallmatrix} 1 & 2 \\ 0 & 1  \end{smallmatrix}\right)$
and
$\left(\begin{smallmatrix}  1 & 0 \\ 2 & 1 \end{smallmatrix}\right)$,
and the action of $\mZ/2$ on $\Gamma(2)$ inverts both generators.
The Lyndon--Hochschild--Serre spectral sequence now implies that $\Gamma(2) \rtimes \mZ/2$ is rationally acyclic and this completes the proof. \end{proof}

\begin{remark} \label{rk:prime 2 centraliser formula}
  Using Lemma \ref{lem:centralisers at prime 2} and Corollary \ref{cor:euler morava quotients},
  as in the $3$-primary case we get
  \begin{align*}\chi_{K(n)}(BSL_3(\mZ) )\ = \sum_{[g_1,\dots,g_n] \in SL_3(\mZ) \bs SL_3(\mZ)_{n,2}}\chi_\mQ(B C\td{g_1, \dots, g_n})=\\ \sum_{[g_1,\dots,g_n] \in SL_3(\mZ) \bs SL_3(\mZ)_{n,2}} 1=\vert SL_3(\mZ) \bs SL_3(\mZ)_{n,2} \vert.
  \end{align*}
  Now using the classification of finite subgroups of $SL_3(\mZ)$ and the explicit identification of conjugacy classes of $2$-subgroups by \cite{Tah}, one can directly compute $\vert SL_3(\mZ) \bs SL_3(\mZ)_{n,2} \vert=2^{2n+1}-2^{n+1}+1$ and recover the calculation in Proposition \ref{prop:sl3 at prime 2} proving that at the prime $p=2$, we have
  \[\chi_{K(n)}(BSL_3(\mZ) ) = 2^{2n+1}-2^{n+1}+1.\]
  We leave the details to the reader. 
\end{remark}

Finally, combining Corollary \ref{cor:Characters for BG}, Remark \ref{rk:prime 2 centraliser formula} and Proposition \ref{prop:sl3 at prime 2}
as well as \cite[Proposition 3.5]{StricklandMorava}, we obtain
the following result:

\begin{prop}
  Let $n \geq 1$ and $p=2$. Then $E^*(BSL_3(\mZ))$ at the prime $2$
  is even and free of rank $2^{2n+1}-2^{n+1}+1$ as a graded $E^*$-module. 
\end{prop}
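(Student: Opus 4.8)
The plan is to assemble results already established in the excerpt: the evenness of $K(n)^*(BSL_3(\mZ))$ from Proposition \ref{prop:sl3 at prime 2}, the finite generation of $E^*(BSL_3(\mZ))$ over $E^*$, and the character isomorphism of Corollary \ref{cor:Characters for BG} together with Lemma \ref{lem:centralisers at prime 2} to pin down the rank.

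First I would record that $SL_3(\mZ)$ admits a finite $SL_3(\mZ)$-CW-model for $\un{E}SL_3(\mZ)$, namely Soul\'e's model \cite{So}, so the global classifying space $B_{\gl}SL_3(\mZ)$ is a compact orbispace. Theorem \ref{thm:compact_finiteness} (iv) then shows that $E^*(BSL_3(\mZ))$ is finitely generated as a graded $E^*$-module. Combined with the evenness of $K(n)^*(BSL_3(\mZ))$ from Proposition \ref{prop:sl3 at prime 2}, Strickland's criterion \cite[Proposition 3.5]{StricklandMorava} gives at once that $E^*(BSL_3(\mZ))$ is even and free as a graded $E^*$-module. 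Since $u\in E^{-2}$ is a unit, $E^*$ is $2$-periodic and concentrated in even degrees, so a homogeneous basis can be taken in degree $0$; call its cardinality $r$.

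Next I would identify $r$ using the character theory. Applying Corollary \ref{cor:Characters for BG} to $G=SL_3(\mZ)$ at the prime $2$ yields an isomorphism of graded $L(E^*)$-modules
\[ L(E^*)\tensor_{E^*} E^*(BSL_3(\mZ)) \ \iso\ \prod_{[g_1,\dots,g_n]\in SL_3(\mZ)\bs SL_3(\mZ)_{n,2}} H^*(BC\td{g_1,\dots,g_n}; L(E^*)), \]
and the index set is finite because $\un{E}SL_3(\mZ)$ has a finite model. By Lemma \ref{lem:centralisers at prime 2} every centralizer $C\td{g_1,\dots,g_n}$ appearing here is rationally acyclic, hence $L(E^*)$-acyclic since $L(E^*)$ is a rational algebra \cite[Theorem C]{HKR}; so each factor on the right is $L(E^*)$ sitting in degree $0$, and the right-hand side is a free graded $L(E^*)$-module on the set $SL_3(\mZ)\bs SL_3(\mZ)_{n,2}$. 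By the orbit count recorded in Remark \ref{rk:prime 2 centraliser formula} this set has cardinality $2^{2n+1}-2^{n+1}+1$.

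Finally I would compare the two descriptions. Since $E^*(BSL_3(\mZ))$ is free over $E^*$ of rank $r$, extending scalars along $E^*\to p^{-1}E^*\to L(E^*)$ produces a free graded $L(E^*)$-module of rank $r$; the map $p^{-1}E^*\to L(E^*)$ is faithfully flat \cite[Section 1.3]{HKR} and $L(E^*)\neq 0$, so rank is well-defined and preserved. Comparing with the character isomorphism forces $r=2^{2n+1}-2^{n+1}+1$, which is the claim. I do not expect a genuine obstacle here: all the substantive input --- evenness, finite generation, rational acyclicity of the centralizers, and the orbit count --- is already in hand, and what remains is bookkeeping with the faithfully flat base change $E^*\to L(E^*)$ and the $2$-periodicity of the coefficients.
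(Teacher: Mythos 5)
Your proposal is correct and follows essentially the same route as the paper, whose proof is a one-line combination of Corollary \ref{cor:Characters for BG}, Remark \ref{rk:prime 2 centraliser formula}, Proposition \ref{prop:sl3 at prime 2}, and \cite[Proposition 3.5]{StricklandMorava}. You have simply made explicit the bookkeeping (finite generation via Theorem \ref{thm:compact_finiteness}~(iv), rational acyclicity of the centralizers from Lemma \ref{lem:centralisers at prime 2}, and rank comparison under the faithfully flat base change $E^*\to L(E^*)$) that the paper leaves implicit.
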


This proposition can also be deduced from \cite[Sections 2--3]{TezYag}.

\subsection{The special linear group \texorpdfstring{$SL_2(\Oc_K)$}{SL(2,O} for a totally real field \texorpdfstring{$K$}{K}} \label{subsec:slOK}

We start by reviewing the case $SL_2(\mathbb{Z})$. It is well-known that $SL_2(\mathbb{Z})$ is isomorphic to the amalgamated product $\mZ/6 \ast_{\mZ/2} \mZ/4$, see \cite[Example 4.2]{Serre}. This decomposition can be used to easily compute invariants of $SL_2(\mathbb{Z})$. The classifying space $\underline{E}SL_2(\mathbb{Z})$ is a tree, compare Example \ref{dihedral};
so we can apply the equivariant cellular structure of the Bass--Serre tree
of $\mZ/6 \ast_{\mZ/2} \mZ/4$ from \cite[Example 4.2]{Serre} to compute
\begin{align*}
  \chi_{\orb}(BSL_2(\mathbb{Z}))\
  &=\  \chi_{\orb}(B\mZ/6)+ \chi_{\orb}(B\mZ/4)- \chi_{\orb}(B\mZ/2)\\
  &=\ \tfrac{1}{6}+\tfrac{1}{4}-\tfrac{1}{2}\ =\ -\tfrac{1}{12} .
\end{align*}
To compute the classical Euler characteristic $\chi_{\mathbb{Q}}(BSL_2(\mathbb{Z}))$
one can use the Mayer--Vietoris sequence for the rational cohomology,
and that finite groups are rationally acyclic. From this one obtains
\[\chi_{\mathbb{Q}}(BSL_2(\mathbb{Z}))=1.\]
Similar Mayer--Vietoris arguments can be used to compute the $E^*$-cohomology
and Morava $K$-theory of $BSL_2(\mathbb{Z})$,
compare Proposition \ref{prop:euleralternating} and Example \ref{dihedral}.
At the prime $p=2$, we get
\[\chi_{K(n)}(BSL_2(\mathbb{Z}))=2^n+4^n-2^n=4^n\]
and at the prime $p=3$, we get
\[\chi_{K(n)}(BSL_2(\mathbb{Z}))=3^n+1-1=3^n.\]
The Mayer--Vietoris sequence shows that Morava $K$-theory and $E$-theory
of $BSL_2(\mathbb{Z})$ are even and $E^*(BSL_2(\mathbb{Z}))$ is free as a graded $E^*$-module,
of rank $4^n$ for $p=2$, and of rank $3^n$ for $p=3$.

The goal of this section is to generalize the above calculations
to rings of integers in totally real fields.
From now on we assume that $K$ is a totally real field, e.g., $K=\mathbb{Q}(\sqrt{d})$,
where $d$ is any square-free positive integer. We let $\Oc_K$ denote the ring of integers of $K$.
It follows from \cite[p.\,453]{Hard} that
\[\chi_{\orb}(BSL_2(\Oc_K))=\zeta_K(-1),\]
where $\zeta_K$ is the Dedekind zeta function of $K$. When $K=\mathbb{Q}$ this specializes to the Riemann zeta function $\zeta$ and one recovers the well-known identity from above
\[\chi_{\orb}(BSL_2(\mathbb{Z}))=-\tfrac{1}{12}=\zeta(-1).\] 
Brown \cite[Lemma p.\,251]{KBro1} shows that any finite subgroup $H \leq SL_2(\Oc_K)$ is cyclic;
and if $H$ is non-trivial and different from the center $\{\pm \Id\}$  of $SL_2(\Oc_K)$,
then it is contained in the unique maximal finite subgroup. The maximal subgroup containing $H$ is given by the centralizer $C(H)$ and additionally $C(H)=N(H)$, where $N(H)$ is the normalizer.
Using this, Brown \cite[Section 9.1]{KBro1} proves the following formula
for the rational Euler characteristic
\[\chi_{\mathbb{Q}}(BSL_2(\Oc_K))= 2\zeta_K(-1)+\sum_{(H)} (1-\tfrac{2}{\vert H \vert}),\]
where the sum runs over the conjugacy classes of the maximal finite subgroups. The number of such conjugacy classes for real quadratic fields is well understood in terms of class numbers of imaginary quadratic fields,
see \cite{Prestel} and \cite[p. 198]{Hirz}. 

The group $SL_2(\Oc_K)$ is an arithmetic group and hence admits a finite model for $\underline{E}G$ \cite[Theorem 3.2]{Ji}. Using the behavior of finite subgroups, we can see that the conditions of \cite[Corollary 2.8]{LWe} are satisfied and we get the following homotopy pushout square in $G$-spaces, with $G=SL_2(\Oc_K)$:
\begin{align}\begin{aligned} \label{LW pushout}
  \xymatrix{\coprod_{(H)} G \times_H E(H/\{\pm \Id\}) \ar[r] \ar[d] & E (G/\{\pm \Id\}) \ar[d]  \\ \coprod_{(H)} G/H \ar[r] & \underline{E}G, }
  \end{aligned}\end{align}
    where the disjoint union runs over the conjugacy classes of the maximal finite subgroups. This pushout is obtained by applying \cite[Corollary 2.8]{LWe} to the inclusion of the families of subgroups $\{1, \{\pm \Id\}\} \subset \Fin$. The following generalizes the above formula of Brown;
for $n=1$ it specializes to a result of Adem \cite[Example 4.4]{Adem}.

\begin{prop} \label{prop:Euler sl2}
  Let $p$ be an odd prime, and $K$ a totally real field. Then for any $n \geq 0$, we have
\[\chi_{K(n)}(BSL_2(\Oc_K))= 2\zeta_K(-1)+\sum_{(H)} \left({\vert H_{(p)} \vert}^n-\tfrac{2}{\vert H \vert}\right),\]
where the sum runs over the conjugacy classes of the maximal finite subgroups. 
\end{prop}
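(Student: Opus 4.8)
The plan is to run Corollary \ref{cor:euler morava quotients} for $G=SL_2(\Oc_K)$, which admits a finite model for $\un{E}G$ because it is an arithmetic group, and then to evaluate the resulting sum
\[
\chi_{K(n)}(BSL_2(\Oc_K)) \ = \ \sum_{[g_1,\dots,g_n]\in G\bs G_{n,p}} \chi_\mQ\bigl(BC\td{g_1,\dots,g_n}\bigr)
\]
using Brown's structural description of the finite subgroups of $SL_2(\Oc_K)$ (cited above from \cite{KBro1}) together with his rational Euler characteristic formula $\chi_\mQ(BSL_2(\Oc_K)) = 2\zeta_K(-1)+\sum_{(H)}(1-\tfrac{2}{|H|})$. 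I would split the sum into the contribution of the trivial tuple $(1,\dots,1)$, whose centralizer is all of $G$ and which forms a one-point $G$-orbit, and the contributions of the non-trivial tuples. For $n=0$ there is only the trivial tuple and the identity reduces to Brown's formula (since $|H_{(p)}|^0=1$), so one may assume $n\ge 1$.

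\textbf{Centralizers of non-trivial tuples.} The key point is that these centralizers are \emph{finite}. If $(g_1,\dots,g_n)\in G_{n,p}$ is non-trivial, then $P:=\td{g_1,\dots,g_n}$ is a non-trivial abelian group generated by elements of $p$-power order, hence a non-trivial finite $p$-group; since $p$ is odd we have $|P|\ge p\ge 3$, so $P$ is not the centre $\{\pm\Id\}$. By Brown, $P$ is contained in a unique maximal finite subgroup $H$, this $H$ is cyclic, and $C(P)=N(P)=H$. In particular $C\td{g_1,\dots,g_n}=H$ is finite, so $BC\td{g_1,\dots,g_n}$ is rationally acyclic and the corresponding term equals $1$. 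Hence
\[
\chi_{K(n)}(BSL_2(\Oc_K)) \ = \ \chi_\mQ(BG) \ + \ \bigl(|G\bs G_{n,p}|-1\bigr).
\]

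\textbf{Counting non-trivial orbits.} It remains to show $|G\bs G_{n,p}|-1=\sum_{(H)}(|H_{(p)}|^n-1)$, the sum running over conjugacy classes of maximal finite subgroups. A non-trivial tuple determines the $G$-conjugacy class of the unique maximal finite subgroup $H\supseteq\td{g_1,\dots,g_n}$, and every non-trivial tuple is $G$-conjugate into such an $H$; so I would fix representatives $H$ and match the non-trivial orbits over $(H)$ with the $N_G(H)$-orbits of non-trivial tuples lying in $H$. Here $G$-conjugacy between two non-trivial tuples lying in $H$ forces the conjugating element into $N_G(H)$, because conjugation must carry the unique maximal finite overgroup (namely $H$) to itself. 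Since $H$ is cyclic, its commuting $p$-power-order $n$-tuples are exactly $(H_{(p)})^{\times n}$ for the cyclic Sylow $p$-subgroup $H_{(p)}$, any non-trivial such tuple generates a non-trivial $p$-subgroup of $H$ whose unique maximal overgroup is $H$ itself, and $N_G(H)=C(H)=H$ acts on tuples by conjugation, i.e.\ trivially because $H$ is abelian. So each class $(H)$ contributes exactly $|H_{(p)}|^n-1$ distinct non-trivial orbits. Combining this with Brown's rational formula and merging the two sums over $(H)$ yields
\[
\chi_{K(n)}(BSL_2(\Oc_K)) \ = \ 2\zeta_K(-1)+\sum_{(H)}\Bigl(1-\tfrac{2}{|H|}\Bigr)+\sum_{(H)}\bigl(|H_{(p)}|^n-1\bigr)\ = \ 2\zeta_K(-1)+\sum_{(H)}\Bigl(|H_{(p)}|^n-\tfrac{2}{|H|}\Bigr),
\]
as claimed; a class with $H=\{\pm\Id\}$, if present, contributes $0$ to both sums so the index sets match. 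The main obstacle is the orbit bookkeeping in this last step --- in particular the reduction of $G$-conjugacy of tuples inside a fixed $H$ to $N_G(H)$-conjugacy, and checking that no non-trivial tuple generates the centre (which is exactly where oddness of $p$ is used).
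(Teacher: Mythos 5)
Your argument is correct, but it is not the route the paper takes for this proposition: the paper's proof applies the L\"uck--Weiermann homotopy pushout \eqref{LW pushout} for the family inclusion $\{1,\{\pm\Id\}\}\subset\Fin$, passes to quotient spaces, and runs the Mayer--Vietoris sequence in $K(n)$-cohomology, using that at an odd prime $K(n)^*(B(G/\{\pm\Id\}))\cong K(n)^*(BG)$ and $K(n)^*(B(H/\{\pm\Id\}))\cong K(n)^*(BH)$, that $\chi_{K(n)}(\un{B}G)=\chi_{\mQ}(BG)$ since $\un{B}G$ is a finite complex, and the Hopkins--Kuhn--Ravenel count $\chi_{K(n)}(BH)=|H_{(p)}|^n$ for the finite cyclic groups $H$. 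Your proof instead goes through Corollary \ref{cor:euler morava quotients} and the Brown classification of finite subgroups; this is precisely the alternative proof the paper itself records in Remark \ref{rk:sl2 centraliser odd prime}, and you supply the orbit bookkeeping (reduction of $G$-conjugacy of non-trivial tuples inside a maximal finite $H$ to $N(H)=C(H)=H$-conjugacy, hence triviality of the action) in more detail than the remark does. The trade-off: the pushout/Mayer--Vietoris argument sits well with the subsequent analysis of $E^*(BSL_2(\Oc_K))$ and avoids counting conjugacy classes of tuples, while your centralizer-formula argument is more self-contained given Theorem A and makes the group-theoretic content (finiteness of the relevant centralizers, where oddness of $p$ enters via $\td{g_1,\dots,g_n}\neq\{\pm\Id\}$) explicit. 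Both are valid.
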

\begin{proof} Let $G=SL_2(\Oc_K)$. By taking quotients in the pushout \eqref{LW pushout},
  we get a homotopy pushout of spaces
 \[\xymatrix{\coprod_{(H)} B(H/\{\pm \Id\}) \ar[r] \ar[d] & B(G/\{\pm \Id\}) \ar[d]  \\ \coprod_{(H)} *\ar[r] & \underline{B}G, }\]
 where $\underline{B}G=G\bs \underline{E}G$.
 %(Note that $G/\{\pm \Id\}$ is also known as the projective linear group $PSL_2(\Oc_K)$).
 Using the Mayer--Vietoris sequence of this pushout in Morava $K$-theory, we obtain the equation 
\[ \chi_{K(n)}(B(G/\{\pm \Id\})) + \sum_{(H)} 1= \chi_{K(n)}(\underline{B}G ) +\sum_{(H)}  \chi_{K(n)}(B(H/\{\pm \Id\})). \]
Since $p$ is odd, the Atiyah--Hirzebruch spectral sequence shows that
$K(n)^*(B(G/\{\pm \Id\}))$ and $K(n)^*(B(H/\{\pm \Id\}))$ are isomorphic to
$K(n)^*(BG)$ and $K(n)^*(BH)$, respectively.
Additionally, since $\underline{B}G$ admits a finite CW-model,
we get that $\chi_{K(n)}(\underline{B}G )=\chi_{\mathbb{Q}}(\underline{B}G)=\chi_{\mathbb{Q}}(BG)$. Hence the latter equation can be rewritten as
\[ \chi_{K(n)}(BG) + \sum_{(H)} 1= \chi_{\mathbb{Q}}(BG) +\sum_{(H)}  \chi_{K(n)}(BH).\]
Using Brown's formula and $ \chi_{K(n)}(BH)={\vert H_{(p)} \vert}^n$ from \cite{HKR}, we obtain the formula
\[\chi_{K(n)}(BSL_2(\Oc_K))= 2\zeta_K(-1)+\sum_{(H)} \left({\vert H_{(p)} \vert}^n-\tfrac{2}{\vert H \vert}\right). \]
\end{proof}

\begin{remark} \label{rk:sl2 centraliser odd prime} There is an alternative way to prove Proposition \ref{prop:Euler sl2} using Corollary \ref{cor:euler morava quotients}.
  Indeed, by the classification of finite subgroups of $G=SL_2(\Oc_K)$
  as given for example in \cite[Lemma p.\,251]{KBro1}, 
\[ \vert G \bs G_{n,p} \vert-1=\sum_{(H)} ({\vert H_{(p)} \vert}^n-1),\]
where the sum runs over the conjugacy classes of the maximal finite subgroups. This uses that for any non-trivial finite $p$-subgroup $L$, the normalizer $N(L)$ and the centralizer $C(L)$ agree and $L$ is contained in the unique maximal finite subgroup. Using additionally that $C(L)$ is finite, by Corollary \ref{cor:euler morava quotients} and  \cite[Section 9.1]{KBro1}, we get
\begin{align*}
  \chi_{K(n)}(BSL_2(\Oc_K))\
  &=\ \sum_{[g_1,\dots,g_n] \in G \bs G_{n,p} }\chi_\mQ(B C\td{g_1, \dots, g_n})\\
  &=\ \chi_{\mathbb{Q}}(BSL_2(\Oc_K))+\vert G \bs G_{n,p} \vert -1\\
  &=\ 2\zeta_K(-1)+\sum_{(H)} (1-\tfrac{2}{\vert H \vert})+\sum_{(H)} ({\vert H_{(p)} \vert}^n-1)\\
  &=\ 2\zeta_K(-1)+\sum_{(H)} ({\vert H_{(p)} \vert}^n-\tfrac{2}{\vert H \vert}).
\end{align*}
\end{remark}

\begin{remark}
  Similar methods can be used to prove that at the prime $p=2$,
  for any $n \geq 0$, we get the formula
  \[\chi_{K(n)}(BSL_2(\Oc_K))= 2^{n+1}\zeta_K(-1)+\sum_{(H)} \left({\vert H_{(2)} \vert}^n-\tfrac{2^{n+1}}{\vert H \vert}\right).\]
  The reason why the powers of $2$ show up in this formula is that we have $n$-tuples $(\pm 1, \pm 1, \dots, \pm 1) \in SL_2(\Oc_K)_{n,2}$. We will not give complete details for this calculation. 
\end{remark}

Now we consider the Morava $E$-cohomology of $BSL_2(\Oc_K)$;
at height $n=1$ this has already been done in \cite[Example 4.4]{Adem}.
Again we will deal with the odd primary case.
For an odd prime $p$, Remark \ref{rk:sl2 centraliser odd prime}
and the character formula of Corollary \ref{cor:Characters for BG} provide an isomorphism
%\[ L(E^*)\tensor_{E^*} E^*(BSL_2(\Oc_K)) \cong H^*(BSL_2(\Oc_K); L(E^*)) \oplus \bigoplus_{\sum_{(H)} ({\vert H_{(p)} \vert}^n-1)} L(E^*), \]
\[ L(E^*)\tensor_{E^*} E^*(BSL_2(\Oc_K)) \cong H^*(BSL_2(\Oc_K); L(E^*)) \oplus F, \]
where $F$ is a free $L(E^*)$-module in even degrees of rank
$\sum_{(H)} ({\vert H_{(p)} \vert}^n-1)$,
where $(H)$ runs over the conjugacy classes of the maximal finite subgroups.
Alternatively one could use the Mayer--Vietoris sequence associated
to the pushout \eqref{LW pushout}.

To compute $E^*(BSL_2(\Oc_K))$ integrally, it would help if $K(n)^*(BSL_2(\Oc_K))$ were even.
This is the case when $K=\mathbb{Q}$ as discussed at the beginning of this section.
We argue in the next example that this also holds for $K=\mathbb{Q}(\sqrt{5})$.
Consequently, we get an integral computation of $E^*(BSL_2(\mZ[\frac{1+\sqrt{5}}{2}]))$
for all odd primes and all heights. 

\begin{eg}
  Let $K=\mathbb{Q}(\sqrt{d})$ be a real quadratic field, where $d$ is a square-free positive integer. In this case an infinite model for $\underline{E}G$ is given by $\mathbb{H} \times \mathbb{H}$, where $\mathbb{H}$ is the complex upper half plane. Here $SL_2(\Oc_K)$ acts through the embedding into $SL_2(\mathbb{R}) \times SL_2(\mathbb{R})$. The latter acts on $\mathbb{H} \times \mathbb{H}$ via the M\"obius transformations. 
  It follows now from the Mayer--Vietoris sequence of the pushout \eqref{LW pushout},
  that if $K(n)^*(\underline{B}G)$ is even, then so is $K(n)^*(BG/\{\pm \Id\})$ and hence $K(n)^*(BG)$ will be even. It thus suffices to give a criterion which guarantees that $K(n)^*(\underline{B}G)$ is even.

  The space $\underline{B}G =\mathbb{H} \times \mathbb{H} / SL_2(\Oc_K)$ is known as the Hilbert modular surface of the quadratic field $K$ and its rational cohomology is well studied,
  see for example \cite{Hirz}.
  Not many integral cohomology computations of the Hilbert modular surfaces
  are available, however.
  The thesis of Brownstein \cite{Brownst} computes $H^*(\mathbb{H} \times \mathbb{H} / SL_2(\Oc_K), \mathbb{F}_p)$ for $K=\mathbb{Q}(\sqrt{5})$. It follows from the 0-line of
  the spectral sequence tabulated in \cite[p.\,43]{Brownst} that for $p$ an odd prime,
  the reduced cohomology $\widetilde{H}^n(\mathbb{H} \times \mathbb{H} / SL_2(\Oc_K), \mathbb{F}_p)$ vanishes unless $n=2$. 
  This implies that $H^*(\mathbb{H} \times \mathbb{H} / SL_2(\Oc_K), \mathbb{F}_p)$ is even when $K=\mathbb{Q}(\sqrt{5})$ and hence by the Atiyah--Hirzebruch spectral sequence we conclude that $K(n)^*(\underline{B}G)=K(n)^*(\mathbb{H} \times \mathbb{H} / SL_2(\Oc_K))$ is even.
  Thus $K(n)^*(BSL_2(\mZ[\frac{1+\sqrt{5}}{2}]))$ is even.

By \cite[Theorem, p.198]{Hirz} and \cite{Prestel}, when $K=\mathbb{Q}(\sqrt{d})$, there are explicit formulas for computing the numbers and types of conjugacy classes of maximal finite subgroups of $SL_2(\Oc_K)$. If $d \geq 7$ is square-free and coprime to $6$, then any maximal finite subgroup of $SL_2(\Oc_K)$ is isomorphic to $\mZ/4$ or $\mZ/6$ and the number of conjugacy classes of such subgroups is explicitly determined by class numbers of imaginary quadratic fields with discriminants being explicit multiples of $-d$. For $d=5$ case by \cite[Table, p.\,200]{Hirz}, it follows that any maximal finite subgroup of $SL_2(\mZ[\frac{1+\sqrt{5}}{2}])$ is isomorphic to  $\mZ/4$, $\mZ/6$ or $\mZ/10$ and there are exactly two conjugacy classes for each of these groups. We also know the special value of the Dedekind zeta function $\zeta_{\mathbb{Q}(\sqrt{5})}(-1)=\frac{1}{30}$. This allows to make the formula of Proposition \ref{prop:Euler sl2} more explicit. For example, for $p=3$ we have
\begin{align*}
  \chi_{K(n)}(BSL_2(\mZ[\tfrac{1+\sqrt{5}}{2}]))
  &=\tfrac{1}{15}+2(3^n-\tfrac{2}{6})+2(1-\tfrac{2}{4})+2(1-\tfrac{2}{10})\\
  &=2\cdot 3^n+2.
\end{align*}

Since $K(n)^*(BSL_2(\mZ[\frac{1+\sqrt{5}}{2}]))$ is even, by \cite[Proposition 3.5]{StricklandMorava} we conclude that the $E^*$-module $E^*(BSL_2(\mZ[\frac{1+\sqrt{5}}{2}]))$ is even and free of rank $2\cdot 3^n+2$. 
Similar results can be obtained at the prime $p=5$.  
\end{eg}

\subsection{Crystallographic groups}
In this subsection we consider $\chi_{K(n)}(BG)$ for any group $G$ which fits into a short exact sequence ($n \geq 1$)
\[1\to \mZ^m \to G \to \mZ/p \to 1.\]
Such groups are certain crystallographic groups and their invariants have interesting behavior depending on the action of $\mZ/p$ on $\mZ^m$. The rational and orbifold
Euler characteristics were studied in \cite[Example 6.94]{LBook} and the homological and $K$-theoretic computations have been done in \cite{Ademcrys, AGPP, Davis-Lueck}. Using these calculations and analysis of centralizers by Corollary \ref{cor:Characters for BG} and Corollary \ref{cor:euler morava quotients}, one can give explicit formulas for $L(E^*)\tensor_{E^*} E^*(BG)$ and $\chi_{K(n)}(BG)$. In this subsection we will mostly focus on $\chi_{K(n)}(BG)$ and only briefly mention the calculation of $L(E^*)\tensor_{E^*} E^*(BG)$ in the special case of free actions. 

Crystallographic groups $G$ in the above sense admit a finite $G$-CW model for $\underline{E}G$
by \cite[Example 6.94]{LBook}.
If $G$ is torsion-free, then $\chi_{K(n)}(BG)=\chi_{\mathbb{Q}}(BG)=0$. If $G$ has torsion, then it has to be $p$-torsion and any finite subgroup of $G$ is isomorphic to $\mZ/p$. In this case we have an isomorphism $G \cong \mZ^m \rtimes_{\rho} \mZ/p$, where $\rho$ is an integral representation of $\mZ/p$. If the action by $\mZ/p$ has non-trivial fixed points,
then for any $g\in G$ of order $p$,
the  centralizer $C\td{g}$ is isomorphic to the product of $\mZ/p$ with $\mZ^l$, for a fixed $l \geq 1$ (in fact $\mZ^l=(\mZ^m)^{\mZ/p}$) and hence $\chi_{\mathbb{Q}}(BC\td{g})=0$.
It follows from \cite[Example 6.94]{LBook} that $\chi_{\mathbb{Q}}(BG)=0$;
alternatively, one can also use Brown's formula \cite{KBro2} from Example \ref{eg:Brown theorem}.
Hence using Corollary \ref{cor:euler morava quotients}, we get $\chi_{K(n)}(BG)=0$ at the prime $p$. 

Finally, we suppose $\mZ/p$ acts freely on $\mZ^m\setminus \{0\}$.
Let $r$ denote the number of conjugacy classes of non-trivial finite subgroups.
In this case every non-trivial finite subgroup of $G$ is self-normalizing,
and hence
\[ \chi_{K(n)}(BG)=\chi_{\mathbb{Q}}(BG)+r(p^n-1) \]
 by Corollary \ref{cor:euler morava quotients}.
Using \cite[Example 6.94]{LBook} or \cite{KBro2}, we know that $\chi_{\mathbb{Q}}(BG)=-\frac{r}{p}+r$. Hence we obtain the formula
\[\chi_{K(n)}(BG)=rp^n-\tfrac{r}{p}.\]
It follows from the computations in  \cite{Ademcrys, AGPP, Davis-Lueck} that in fact if $\mZ/p$ acts freely on $\mZ^m\setminus \{0\}$, then we have $m=k(p-1)$ and $r=p^{k}$. Finally, we observe that analogously to the case of right angled Coxeter groups in Example \ref{eg:Coxeterorb},
it also makes sense to consider the case $n=-1$ in the above formula,
which recovers $\chi_{\orb}(BG)=0$.

We conclude by observing that when $\mZ/p$ acts freely on $\mZ^m\setminus \{0\}$,
then Corollary \ref{cor:Characters for BG} provides an isomorphism
\[L(E^*)\tensor_{E^*} E^*(BG) \cong H^*(BG; L(E^*)) \oplus F, \]
where $F$ is a free $L(E^*)$-module in even degrees of rank $r(p^n-1)$.
The rational cohomology $H^*(BG; \mathbb{Q})$ is known by \cite{Ademcrys, AGPP, Davis-Lueck}. Thus one obtains the full computation of $L(E^*)\tensor_{E^*} E^*(BG)$. We do not go here into more details. 

\subsection{The general linear group \texorpdfstring{$GL_{p-1}(\mZ)$}{GL(p-1,Z)} for a prime \texorpdfstring{$p\geq 5$}{p>=5}} \label{subsec:GLZ}

The examples discussed so far are computable in two different ways,
either by Corollary \ref{cor:euler morava quotients}
and Corollary \ref{cor:Characters for BG},
or by use of explicit cellular models for $\underline{E}G$
and arguments as in Proposition \ref{prop:euleralternating} and Proposition \ref{prop:Euler sl2}. 

In this section we present a first example $G=GL_{p-1}(\mZ)$
where it is essential to use our formulas from Corollary \ref{cor:euler morava quotients} and Corollary \ref{cor:Characters for BG}, i.e.,
we do not know other ways to arrive at these computations.
At the height $n=1$ these calculations were already done by Adem \cite[Example 4.5]{Adem}. 
As an arithmetic group, $GL_{p-1}(\mZ)$ has a finite model for $\underline{E}G$
see \cite[Theorem 3.2]{Ji}; we do not know any sufficiently explicit
cellular structure to work with.

\begin{prop}\label{prop:Euler_KN_equals_rational_GL}
  Let $p\geq 5$ be a prime. Then for every $n \geq 0$,
  \[\chi_{K(n)}(BGL_{p-1}(\mZ))=\chi_{\mathbb{Q}}(BGL_{p-1}(\mZ)).\]
\end{prop}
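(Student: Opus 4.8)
The plan is to apply Corollary \ref{cor:euler morava quotients} directly. Since $GL_{p-1}(\mZ)$ is an arithmetic group it admits a finite model for $\un{E}G$ by \cite[Theorem 3.2]{Ji}, so writing $G=GL_{p-1}(\mZ)$ we get
\[ \chi_{K(n)}(BGL_{p-1}(\mZ)) \ = \ \sum_{[g_1,\dots,g_n]\in G\bs G_{n,p}} \chi_\mQ(BC\td{g_1,\dots,g_n}). \]
The summand indexed by the trivial tuple $(1,\dots,1)$ is precisely $\chi_\mQ(BG)$, so the whole proposition reduces to showing that $\chi_\mQ(BC\td{g_1,\dots,g_n})=0$ for every tuple in which some $g_i\neq 1$. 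For $n=0$ there is nothing to do, since $G_{0,p}$ is a single point and $\chi_{K(0)}$ is by convention $\chi_\mQ$; so I would assume $n\geq 1$.

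The first real step is to classify the finite $p$-subgroups of $GL_{p-1}(\mZ)$. Two elementary facts suffice. There is no element of order $p^2$: a primitive $p^2$-th root of unity has degree $\varphi(p^2)=p(p-1)>p-1$ over $\mQ$, so no $(p-1)\times(p-1)$ integral matrix can have it as an eigenvalue; hence every nontrivial $p$-element has order exactly $p$. And $(\mZ/p)^2$ does not embed in $GL_{p-1}(\mZ)$: every nontrivial irreducible $\mQ$-representation of $(\mZ/p)^2$ has dimension $p-1$ (the Galois orbits of nontrivial characters have size $p-1$) and each such representation has nontrivial kernel, so a faithful representation would require two of them, already forcing dimension $2(p-1)>p-1$. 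Consequently every abelian $p$-subgroup $\td{g_1,\dots,g_n}$ is either trivial or cyclic of order $p$. Moreover, a nontrivial $g$ of order $p$ has minimal polynomial divisible by the $p$-th cyclotomic polynomial $\Phi_p$ and dividing $x^p-1=(x-1)\Phi_p(x)$; since $\deg\Phi_p=p-1$ equals the matrix size, both minimal and characteristic polynomial of $g$ must equal $\Phi_p$.

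The second step is to compute the centralizer $C\td{g}$ of such a $g$. Using $g$, the lattice $\mZ^{p-1}$ becomes a module over $\mZ[x]/(\Phi_p)\cong\mZ[\zeta_p]$, and an element of $G$ commutes with $g$ exactly when it is $\mZ[\zeta_p]$-linear for this structure. As $\mZ[\zeta_p]$ is a Dedekind domain and $\mZ^{p-1}\otimes\mQ$ is one-dimensional over $\mQ(\zeta_p)$, the module is a fractional ideal $I$, whose ring of multipliers is $\mZ[\zeta_p]$ itself; hence $C\td{g}\cong\mZ[\zeta_p]^\times$, independently of the conjugacy class of $g$. By Dirichlet's unit theorem this group is $\mu_{2p}\times\mZ^{(p-3)/2}$, and for $p\geq 5$ the free rank $(p-3)/2$ is at least $1$. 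One can then take $\un{E}C\td{g}$ to be a Euclidean space with quotient a torus of positive dimension, so $\chi_\mQ(BC\td{g})=\chi_\mQ(\un{B}C\td{g})=0$. Since any nontrivial tuple $(g_1,\dots,g_n)$ generates a cyclic group of order $p$, with $C\td{g_1,\dots,g_n}=C\td{g}$ for a generator $g$, every nontrivial summand vanishes and the sum collapses to $\chi_\mQ(BGL_{p-1}(\mZ))$, as claimed.

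I expect the only delicate point to be the identification $C\td{g}\cong\mZ[\zeta_p]^\times$ and the ensuing vanishing; but this is standard commutative algebra (the ring of multipliers of an invertible ideal in a Dedekind domain) together with Dirichlet's theorem, and it is exactly the mechanism underlying the height-$1$ case carried out by Adem in \cite[Example 4.5]{Adem}. Everything else is bookkeeping with Corollary \ref{cor:euler morava quotients} and the representation theory of $(\mZ/p)^2$; note in particular that the hypothesis $p\geq 5$ enters precisely to guarantee $(p-3)/2\geq 1$, which is why the case $p=3$ must be, and is, handled separately.
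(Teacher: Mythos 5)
Your proposal is correct and follows essentially the same route as the paper: apply Corollary \ref{cor:euler morava quotients}, observe that every nontrivial finite $p$-subgroup of $GL_{p-1}(\mZ)$ is cyclic of order $p$ with centralizer $\mZ[\zeta_p]^\times\cong\mZ/2p\times\mZ^{(p-3)/2}$, and conclude that all nontrivial summands vanish because this group has positive free rank. The only difference is that you supply direct elementary arguments for the facts the paper imports from Levitt--Nicolas and Ash's \cite[Lemma 4]{Ash}, which is a fine (and self-contained) substitute.
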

\begin{proof}
  It follows from \cite[Proposition 1.1]{LevNic} that the only $p$-power order elements in $GL_{p-1}(\mZ)$ are elements of order $p$.
  Every elementary abelian $p$-subgroup in $GL_{p-1}(\mZ)$ has the rank at most $1$,
  see e.g., \cite[Section 2]{Ash};
  so every finite $p$-subgroup of $GL_{p-1}(\mZ)$ is isomorphic to $\mZ/p$.
  It follows from \cite[Lemma 4]{Ash} that $C\td{A}$ for any $A$ of order $p$ is isomorphic to the group of units in $\mZ[\zeta_p]$ which by the Dirichlet unit theorem is isomorphic to
\[\mZ/p \times \mZ/2 \times \mZ^{\frac{p-3}{2}}.\]
Corollary \ref{cor:euler morava quotients} yields
\[\chi_{K(n)}(BGL_{p-1}(\mZ))=\chi_{\mathbb{Q}}(BGL_{p-1}(\mZ))+\sum_{[A_1, \dots,A_n]} \chi_{\mathbb{Q}}(BC\td{A_1, \dots,A_n}),\]
where $[A_1, \dots,A_n] \in GL_{p-1}(\mZ) \bs (GL_{p-1}(\mZ)_{n,p}-\{(1,\dots,1)\}) $. 
By the above observation 
\[C\td{A_1, \dots,A_n} \cong \mZ/p \times \mZ/2 \times \mZ^{\frac{p-3}{2}}\]
and hence $\chi_{\mathbb{Q}}(BC\td{A_1, \dots,A_n})=0$ since the rational Euler characteristic of tori is zero. This gives the desired result. \end{proof}

For Morava $E$-theory we obtain:
\begin{prop} \label{prop:Morava E GL} Let $p\geq 5$ be a prime. Then the character map gives an isomorphism
\begin{align*} L(E^*)\tensor_{E^*} E^*(BGL_{p-1}(\mZ)) & \cong  \\ H^*(BGL_{p-1}(\mZ); L(E^*)) & \oplus \bigoplus_{ \frac{p^n-1}{p-1} \vert \Cl(\mathbb{Q}(\zeta_p)) \vert} (L(E^*) \oplus L(E^*)[1])^{\otimes_{L(E^*)} \frac{p-3}{2}},\end{align*}
where $\Cl(\mathbb{Q}(\zeta_p))$ denotes the ideal class group of $\mathbb{Q}(\zeta_p)$. 
\end{prop}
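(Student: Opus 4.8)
The plan is to apply the character isomorphism of Corollary \ref{cor:Characters for BG} to $G=GL_{p-1}(\mZ)$ and then make the right hand side explicit using the analysis of $p$-subgroups and their centralizers from the proof of Proposition \ref{prop:Euler_KN_equals_rational_GL}. Recall that the character map gives
\[ L(E^*)\tensor_{E^*} E^*(BGL_{p-1}(\mZ)) \ \cong\ \prod_{[A_1,\dots,A_n]\in GL_{p-1}(\mZ)\bs GL_{p-1}(\mZ)_{n,p}} H^*(BC\td{A_1,\dots,A_n};L(E^*)). \]
The tuple $(1,\dots,1)$ contributes the summand $H^*(BGL_{p-1}(\mZ);L(E^*))$, so the task is to identify the contribution of all tuples with at least one $A_i\neq 1$.

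First I would recall, as in the proof of Proposition \ref{prop:Euler_KN_equals_rational_GL}, that every nontrivial finite $p$-subgroup of $GL_{p-1}(\mZ)$ is cyclic of order $p$ (by \cite[Proposition 1.1]{LevNic} and \cite[Section 2]{Ash}), so any tuple $(A_1,\dots,A_n)\in GL_{p-1}(\mZ)_{n,p}$ with some $A_i\neq 1$ generates a subgroup $H=\td{A_1,\dots,A_n}$ of order $p$; moreover such an $H$ determines the tuple up to the choice of which power of a generator each $A_i$ is, i.e. up to an element of $\Hom((\mZ/p)^n, \mZ/p)\setminus\{0\}$. By \cite[Lemma 4]{Ash}, for each such $H$ the centralizer is $C(H)\cong \mZ/p\times\mZ/2\times\mZ^{\frac{p-3}{2}}$, whose classifying space is homotopy equivalent to $B\mZ/p\times B\mZ/2\times(S^1)^{\frac{p-3}{2}}$. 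Since $L(E^*)$ is a rational algebra, $H^*(B\mZ/p;L(E^*))=H^*(B\mZ/2;L(E^*))=L(E^*)$ concentrated in degree $0$, while $H^*(S^1;L(E^*))=L(E^*)\oplus L(E^*)[1]$ (with the shift convention of the paper); by the Künneth formula each such centralizer thus contributes $(L(E^*)\oplus L(E^*)[1])^{\otimes_{L(E^*)}\frac{p-3}{2}}$.

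Next I would count the conjugacy classes of tuples. Grouping tuples by the cyclic group $H$ they generate, the contribution is
\[ \bigoplus_{[H]} \bigoplus_{[\phi]} (L(E^*)\oplus L(E^*)[1])^{\otimes_{L(E^*)}\frac{p-3}{2}}, \]
where $[H]$ runs over conjugacy classes of order-$p$ subgroups and, for fixed $H$, $[\phi]$ runs over $N(H)$-orbits (equivalently $W(H)=N(H)/C(H)$-orbits) of nonzero elements of $\Hom((\mZ/p)^n,\mZ/p)\cong (\mZ/p)^n$. The indexing set $\frac{p^n-1}{p-1}\mid \Cl(\mathbb{Q}(\zeta_p))\mid$ in the statement should therefore be read as: the number of $W(H)$-orbits on $(\mZ/p)^n\setminus\{0\}$ is $\frac{p^n-1}{p-1}$ (because $W(H)$ acts on $\mZ/p$ via scalars, i.e. through $(\mZ/p)^\times$ — this uses that the automorphism of $H$ induced by $N(H)$ is the full group of scalars, which in turn comes from the fact that $H$ sits in a cyclotomic structure so that conjugating permutes the eigenvalues $\zeta_p, \zeta_p^2,\dots$), and the number of conjugacy classes of such subgroups $H$ is the class number $\mid\Cl(\mathbb{Q}(\zeta_p))\mid$; I would cite \cite{Ash} (and the standard correspondence between conjugacy classes of $\mZ[\zeta_p]$-module structures on $\mZ^{p-1}$ and the class group) for the latter. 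Combining these two counts gives exactly the direct sum over the index set $\frac{p^n-1}{p-1}\mid\Cl(\mathbb{Q}(\zeta_p))\mid$ appearing in the proposition.

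The main obstacle I anticipate is pinning down precisely the two enumerations and the action of $W(H)$: one must verify that $N(H)$ acts on $H\cong\mZ/p$ through the full scalar group $(\mZ/p)^\times$ (so that the orbit count on lines through the origin in $(\mZ/p)^n$ is the projective space count $\frac{p^n-1}{p-1}$), and that the set of conjugacy classes of order-$p$ subgroups is in bijection with $\Cl(\mathbb{Q}(\zeta_p))$ via the reference \cite{Ash} — these are the genuinely arithmetic inputs, whereas the homotopy-theoretic part (applying Corollary \ref{cor:Characters for BG}, computing $H^*(BC(H);L(E^*))$ by Künneth, and using rationality of $L(E^*)$) is routine. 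A secondary bookkeeping point is to make sure the grading shifts $[1]$ are tracked correctly through the Künneth formula, but this follows directly from $H^*(S^1;L(E^*))\cong L(E^*)\oplus L(E^*)[1]$.
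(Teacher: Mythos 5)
Your overall strategy coincides with the paper's: apply Corollary \ref{cor:Characters for BG}, identify each nontrivial centralizer as $\mZ/p\times\mZ/2\times\mZ^{\frac{p-3}{2}}$ via \cite[Lemma 4]{Ash}, compute its $L(E^*)$-cohomology by K\"unneth and rationality of $L(E^*)$, and reduce to counting $GL_{p-1}(\mZ)\bs\bigl(GL_{p-1}(\mZ)_{n,p}\setminus\{(1,\dots,1)\}\bigr)$. That part is fine. However, the two arithmetic inputs you isolate as needing verification are both false in general, and your count only comes out right because the two errors cancel. First, the Latimer--MacDuffee correspondence gives a bijection between $\Cl(\mathbb{Q}(\zeta_p))$ and conjugacy classes of order-$p$ \emph{elements}, not subgroups; conjugacy classes of order-$p$ \emph{subgroups} are in bijection with the orbit set $\Delta\bs\Cl(\mathbb{Q}(\zeta_p))$ for the Galois action of $\Delta=\Gal(\mathbb{Q}(\zeta_p)\colon\mathbb{Q})$, since passing from an element to the subgroup it generates quotients by the power (equivalently Galois) action. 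Second, the map $N(H)\to\mathrm{Aut}(H)\cong(\mZ/p)^\times$ is \emph{not} surjective in general: $A$ and $A^k$ are conjugate precisely when the Galois element $\sigma_k$ fixes the ideal class $I(A)$, so the image of $N(H)/C(H)$ in $(\mZ/p)^\times$ is the stabilizer $S_{I(A)}\leq\Delta$, which is proper whenever the Galois action on $\Cl(\mathbb{Q}(\zeta_p))$ moves $I(A)$. A concrete counterexample to both claims is $p=23$: there $\Cl(\mathbb{Q}(\zeta_{23}))\cong\mZ/3$ with complex conjugation acting by inversion, so there are $2$ conjugacy classes of order-$23$ subgroups rather than $3$, and for the class coming from the nontrivial ideal classes $N(H)/C(H)$ has index $2$ in $(\mZ/p)^\times$.

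The repair is exactly the computation the paper carries out: sum the orbit counts
\[
\sum_{[I]\in\Delta\bs\Cl(\mathbb{Q}(\zeta_p))}\frac{p^n-1}{\vert S_I\vert}
\;=\;\frac{p^n-1}{p-1}\sum_{[I]}\vert O_I\vert
\;=\;\frac{p^n-1}{p-1}\,\vert\Cl(\mathbb{Q}(\zeta_p))\vert ,
\]
using $\vert S_I\vert\cdot\vert O_I\vert=p-1$ and the class equation. So the index set in the statement should be read as arising from this orbit--stabilizer bookkeeping, not as a product of the two counts you propose; as written, your verification step would fail, and the proof needs the $\Delta$-equivariance of the Latimer--MacDuffee bijection (as in \cite[Section 1]{Ash}) to go through.
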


\begin{proof}

Given $A \in GL_{p-1}(\mZ)$ of order $p$, then $\zeta_p=e^{2\pi i/p}$ is one of its eigenvalues. Let $x=(x_1, \dots,x_{p-1}) \in \mathbb{Q}(\zeta_p)^{p-1}$ be one of the eigenvectors and $I(A)$ denote the ideal class of the fractional ideal $\mZ x_1+\dots +\mZ x_{p-1}$ of $\mathbb{Q}(\zeta_p)$.
This class is independent of the choice of $x$ since the eigenspaces of $A$ over $\mathbb{Q}(\zeta_p)$ are $1$-dimensional. The latter follows since the Galois group $\Delta=\Gal(\mathbb{Q}(\zeta_p)\colon \mathbb{Q})$ is isomorphic to $\mZ/p^{\times}\cong \mZ/(p-1)$ and all Galois conjugates of $\zeta_p=e^{2\pi i/p}$ are eigenvalues of $A$. In fact, the ideal class $I(A)$ only depends on the conjugacy class of $A$ and by \cite{LatMac}, the map
\[I : GL_{p-1}(\mZ) \bs GL_{p-1}(\mZ)_{1,p} \to \Cl(\mathbb{Q}(\zeta_p))  \]
from conjugacy classes of the order $p$-elements to the ideal class group is a bijection;
see also \cite{SjYang} and \cite{Ash}.
The Galois group $\Delta=\Gal(\mathbb{Q}(\zeta_p)\colon \mathbb{Q})$ acts on the class group  $\Cl(\mathbb{Q}(\zeta_p))$ by the Galois action.
And $\Delta$ acts on $GL_{p-1}(\mZ) \bs GL_{p-1}(\mZ)_{1,p}$ by taking powers of the matrices.
By \cite[Section 1]{Ash}, the isomorphism $I$ is in fact $\Delta$-equivariant with respect to these two actions. This implies that the conjugacy classes of order $p$ subgroups are in bijection with the set of orbits $\Delta \bs \Cl(\mathbb{Q}(\zeta_p))$.
Additionally, it follows as in \cite[Section 1]{Ash} that for any order $p$ element $A$,
there is a short exact sequence 
\[1 \to C\td{A} \to N\td{A} \to S_{I(A)} \to 1,\]
where $S_{I(A)}  \leq \Delta$ is the stabilizer of $I(A)$ (which agrees with the stabilizer of the conjugacy class of $A$), the group $N\td{A} $ is the normalizer of the subgroup generated by $A$,
and $C\td{A}$ is the centralizer of $A$. Any tuple $(A_1, \dots,A_n) \in GL_{p-1}(\mZ)_{n,p}$ with at least one non-trivial element is contained in the unique subgroup of order $p$. This implies that 
\begin{align} \label{eq:conj class} \vert  GL_{p-1}(\mZ) \bs GL_{p-1}(\mZ)_{n,p}\vert-1=\sum_{(H)} \frac{\vert H \vert^n -1}{\vert N(H)/C(H) \vert},  \end{align}
where $H$ runs over conjugacy classes of subgroups of order $p$, and $N(H)$ is the normalizer of $H$ and $C(H)$ is the centralizer of $H$. This and the above discussion imply that
\[\vert  GL_{p-1}(\mZ) \bs GL_{p-1}(\mZ)_{n,p}\vert-1=\sum_{[I] \in \Delta \bs \Cl(\mathbb{Q}(\zeta_p)) } \frac{p^n-1}{\vert S_I \vert},\]
where $S_I \leq  \Delta$ is the stabilizer of $I \in \Cl(\mathbb{Q}(\zeta_p))$. Let $O_I$, denote the orbit of $I$. Then 
\begin{align*}
  \vert  GL_{p-1}(\mZ) \bs GL_{p-1}(\mZ)_{n,p}\vert-1
  &=\sum_{[I] \in \Delta \bs \Cl(\mathbb{Q}(\zeta_p))} \tfrac{p^n-1}{\vert S_I \vert \cdot  \vert O_I \vert } \cdot\vert O_I \vert \\&= \sum_{[I] \in \Delta \bs \Cl(\mathbb{Q}(\zeta_p))} \tfrac{p^n-1}{p-1} \cdot\vert O_I \vert\\
  &=\tfrac{p^n-1}{p-1}\sum_{[I] \in \Delta \bs \Cl(\mathbb{Q}(\zeta_p))} \vert O_I \vert = \tfrac{p^n-1}{p-1} \vert \Cl(\mathbb{Q}(\zeta_p)) \vert. \end{align*}
The last identity is the class equation. 

Now as observed in the proof of Proposition \ref{prop:Euler_KN_equals_rational_GL}, for any tuple $(A_1, \dots,A_n) \in GL_{p-1}(\mZ)_{n,p}-\{(1, \dots, 1)\}$, we have an isomorphism 
\[C\td{A_1, \dots,A_n} \cong \mZ/p \times \mZ/2 \times \mZ^{\frac{p-3}{2}}.\]
Hence, one obtains
  \begin{align*}
    H^*(BC\td{A_1, \dots,A_n}; L(E^*))\ 
    &\cong \ H^*((S^1)^{\times \frac{p-3}{2}}; L(E^*)) \\
    &\cong\ (L(E^*) \oplus L(E^*)[1])^{\otimes_{L(E^*)} \frac{p-3}{2}}.
  \end{align*}
This together with the previous paragraph completes the proof. 
\end{proof}

\begin{remark}\label{rk:GL_2(Z) for p=3}
  To compute  $\chi_{K(n)} (BGL_2(\mZ))$ and $E^*(BGL_2(\mZ))$
  at the prime $p=3$,
  one can proceed in much the same way as for $p\geq 5$, with the caveat that
  in this case the centralizers are finite.
  Alternatively, one can also proceed directly
  using the calculations for $SL_2(\mZ)$ at the beginning
  of Subsection \ref{subsec:slOK} and the extension
  \[1 \to SL_2(\mZ) \to GL_2(\mZ) \to \mZ/2 \to 1.\]
  It follows that $\chi_{K(n)} (BGL_2(\mZ))=(3^n+1)/2$
  and $E^*(BGL_2(\mZ))$ is even and free of rank $(3^n+1)/2$ as a graded $E^*$-module. 
\end{remark}

\begin{remark}
  The rational Euler characteristic of $BGL_{p-1}(\mZ)$ has been computed.
  For $p\geq 13$ we have $\chi_{\mathbb{Q}}(BGL_{p-1}(\mZ))=0$ by \cite[Theorem 0.1 (a)]{Hor}.
  Moreover, $\chi_{\mathbb{Q}}(BGL_{4}(\mZ))=\chi_{\mathbb{Q}}(BGL_{6}(\mZ))=\chi_{\mathbb{Q}}(BGL_{10}(\mZ))=1$ by the following explicit results:
  From \cite[pages 16-17]{DEKM} and \cite{LeeSzcz} we know that $GL_4(\mZ)$ is rationally acyclic. Rational cohomology of $GL_6(\mZ)$ is computed in \cite[Theorem 7.3]{EVGS}:
\[H^n(GL_6(\mZ); \mathbb{Q})=\begin{cases} \mathbb{Q}, \;n=0,5,8\\
0, \; \text{otherwise},\\
\end{cases}\]
implying that $\chi_{\mathbb{Q}}(BGL_{6}(\mZ))=1$. Horozov in \cite[Theorem 3.3]{Hor} shows that $\chi_{\mathbb{Q}}(BGL_{10}(\mZ))=1$. To our knowledge $H^n(GL_{p-1}(\mZ); \mathbb{Q})$ is not computed for $p \geq 11$. Hence Proposition \ref{prop:Morava E GL} only gives a full computation when $p=5,7$. 

\end{remark}

\subsection{The special linear group \texorpdfstring{$SL_{p-1}(\mZ)$}{SL(p-1,Z)} for a prime \texorpdfstring{$p\geq 5$}{p>=5}}

The group $GL_{p-1}(\mZ)$ non-canonically splits as a semidirect product
of $SL_{p-1}(\mZ)$  and $\mZ/2$;
one can use this to obtain formulas for the Morava $K$-theory Euler characteristic
and $E^*$-cohomology of $BSL_{p-1}(\mZ)$ 
from those for $BGL_{p-1}(\mZ)$ from Subsection \ref{subsec:GLZ}.
Instead of exploiting the splitting,
we prefer to follow the same arguments as in the previous subsection and show that computations for $SL_{p-1}(\mZ)$ can be independently deduced. We recall that $SL_{p-1}(\mZ)$ has finite model of $\underline{E}G$ as an arithmetic group \cite[Theorem 3.2]{Ji}.

\begin{prop} \label{prop:Euler char SL and Morava E SL} Let $p\geq 5$ be a prime. 
  \begin{enumerate}[\em (i)]
  \item  For any integer $n \geq 0$,
  \[\chi_{K(n)}(BSL_{p-1}(\mZ))=\chi_{\mathbb{Q}}(BSL_{p-1}(\mZ)).\]
  \item The character map gives an isomorphism
\begin{align*} L(E^*)\tensor_{E^*} E^*(BSL_{p-1}(\mZ)) & \cong  \\ H^*(BSL_{p-1}(\mZ); L(E^*)) & \oplus \bigoplus_{ 2 \cdot \frac{p^n-1}{p-1} \vert \Cl(\mathbb{Q}(\zeta_p)) \vert} (L(E^*) \oplus L(E^*)[1])^{\otimes_{L(E^*)} \frac{p-3}{2}},\end{align*}
  \end{enumerate}
\end{prop}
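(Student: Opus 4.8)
The plan is to mimic, essentially verbatim, the strategy used for $GL_{p-1}(\mZ)$ in Proposition \ref{prop:Euler_KN_equals_rational_GL} and Proposition \ref{prop:Morava E GL}, substituting $SL_{p-1}(\mZ)$ for $GL_{p-1}(\mZ)$ throughout and paying attention to the two-fold discrepancy that shows up in the final index set. First I would recall the relevant input facts about finite $p$-subgroups of $SL_{p-1}(\mZ)$: by \cite[Proposition 1.1]{LevNic} the only elements of $p$-power order have order exactly $p$, by \cite[Section 2]{Ash} every elementary abelian $p$-subgroup has rank at most $1$, so every finite $p$-subgroup is cyclic of order $p$. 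The image in $GL_{p-1}(\mZ)$ of such a subgroup lies in $SL_{p-1}(\mZ)$ automatically, since an order-$p$ matrix has determinant a $p$-th root of unity in $\mZ^\times=\{\pm1\}$, hence determinant $1$. The centralizer $C\td{A}$ in $SL_{p-1}(\mZ)$ of an order-$p$ element $A$ is the kernel of the determinant map on the centralizer in $GL_{p-1}(\mZ)$, which by \cite[Lemma 4]{Ash} is the unit group $\mZ[\zeta_p]^\times\cong \mZ/p\times\mZ/2\times\mZ^{\frac{p-3}{2}}$; the determinant restricted to these units is a homomorphism to $\{\pm1\}$ that is surjective (e.g. $-1$ is a unit of determinant $-1$ on the eigenspace decomposition, or more simply the scalar matrix $-\Id$ when $p-1$ is even has determinant $1$—one checks it is $(-1)$ acting suitably; in any case surjectivity follows since $\mZ[\zeta_p]^\times$ surjects onto $\{\pm1\}$ via the norm-sign), so $C_{SL}\td{A}$ is an index-$2$ subgroup of $\mZ/p\times\mZ/2\times\mZ^{\frac{p-3}{2}}$ and is therefore again abelian of the form (finite $p$-part)$\times$(torsion)$\times \mZ^{\frac{p-3}{2}}$, with a free abelian summand of the same rank $\frac{p-3}{2}\geq 1$.

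For part (i), I would then invoke Corollary \ref{cor:euler morava quotients}: since $SL_{p-1}(\mZ)$ has a finite model for $\un{E}G$ by \cite[Theorem 3.2]{Ji},
\[
\chi_{K(n)}(BSL_{p-1}(\mZ))=\chi_{\mathbb{Q}}(BSL_{p-1}(\mZ))+\sum_{[A_1,\dots,A_n]}\chi_{\mathbb{Q}}(BC\td{A_1,\dots,A_n}),
\]
where the sum runs over conjugacy classes of tuples in $SL_{p-1}(\mZ)_{n,p}$ with at least one nontrivial entry. Any such tuple generates a subgroup of order $p$, so each centralizer $C\td{A_1,\dots,A_n}$ equals some $C_{SL}\td{A}$, which by the previous paragraph is virtually a nontrivial torus (it has a $\mZ^{\frac{p-3}{2}}$ summand of positive rank since $p\geq 5$). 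The rational Euler characteristic of such a group vanishes, so the whole sum is zero, giving (i).

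For part (ii), I would run the counting argument of Proposition \ref{prop:Morava E GL}. Apply Corollary \ref{cor:Characters for BG} (legitimate since $B_{\gl}SL_{p-1}(\mZ)$ is compact): the character map identifies $L(E^*)\tensor_{E^*}E^*(BSL_{p-1}(\mZ))$ with the product over $[A_1,\dots,A_n]\in SL_{p-1}(\mZ)\bs SL_{p-1}(\mZ)_{n,p}$ of $H^*(BC\td{A_1,\dots,A_n};L(E^*))$. Splitting off the trivial tuple gives the summand $H^*(BSL_{p-1}(\mZ);L(E^*))$, and for each nontrivial tuple $H^*(BC\td{A_1,\dots,A_n};L(E^*))\cong H^*((S^1)^{\times\frac{p-3}{2}};L(E^*))\cong (L(E^*)\oplus L(E^*)[1])^{\otimes_{L(E^*)}\frac{p-3}{2}}$, since the finite-order part of the centralizer is $L(E^*)$-acyclic and only the torus factor contributes. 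It remains to count the number of nontrivial conjugacy classes. Using \cite[Section 1]{Ash}, the conjugacy classes of order-$p$ subgroups of $SL_{p-1}(\mZ)$ are governed by the $\Delta$-action on the ideal class group $\Cl(\mathbb{Q}(\zeta_p))$ just as for $GL_{p-1}(\mZ)$, but the relevant normalizer quotients $\vert N(H)/C(H)\vert$ are now computed inside $SL$ rather than $GL$, and the resulting class-equation bookkeeping yields
\[
\vert SL_{p-1}(\mZ)\bs SL_{p-1}(\mZ)_{n,p}\vert - 1 \;=\; 2\cdot\tfrac{p^n-1}{p-1}\,\vert\Cl(\mathbb{Q}(\zeta_p))\vert,
\]
the extra factor of $2$ being exactly the index $[GL_{p-1}(\mZ):SL_{p-1}(\mZ)]$ reflected in the coarser conjugacy relation. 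Feeding this count into the product then gives the stated direct-sum decomposition. The main obstacle I anticipate is this last step: carefully verifying the factor of $2$, i.e. computing $\vert N_{SL}(H)/C_{SL}(H)\vert$ from the short exact sequence $1\to C\td{A}\to N\td{A}\to S_{I(A)}\to 1$ of \cite{Ash} intersected with $SL$, and checking that passing from $GL$ to $SL$ merges (rather than splits) conjugacy classes in precisely the way that produces the index-$2$ correction in the class equation. One clean way to handle this is to use the semidirect product decomposition $GL_{p-1}(\mZ)\cong SL_{p-1}(\mZ)\rtimes\mZ/2$ together with the observation that $\mZ/2$ acts trivially on the relevant set of conjugacy classes of order-$p$ subgroups (since $-\Id\in SL_{p-1}(\mZ)$ already, and a diagonal sign change is $SL$-realizable up to conjugacy), so each $GL$-class of order-$p$ subgroups splits into exactly two $SL$-classes; I would double-check this claim against \cite{LatMac,Ash} before committing to it.
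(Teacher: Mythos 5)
Your overall strategy --- running Corollaries \ref{cor:euler morava quotients} and \ref{cor:Characters for BG} for $SL_{p-1}(\mZ)$ and tracking how the conjugacy count changes from the $GL$ case --- is the same as the paper's, but your proposal contains a genuine error in the centralizer computation, and that error is inconsistent with the factor of $2$ you need in part (ii). You claim that the determinant, restricted to $C_{GL_{p-1}(\mZ)}\langle A\rangle\cong\mZ[\zeta_p]^\times$, surjects onto $\{\pm1\}$, so that the $SL$-centralizer has index $2$ in the $GL$-centralizer. This is false: the determinant of a unit $u$ acting on the lattice $I$ is the field norm $N_{\mQ(\zeta_p)/\mQ}(u)$, and since $\mQ(\zeta_p)$ is totally imaginary the norm of any unit equals $\prod_j|\sigma_j(u)|^2=+1$ (likewise $\det(-\Id)=(-1)^{p-1}=+1$ for odd $p$). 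Hence $C_{GL}\langle A\rangle$ is entirely contained in $SL_{p-1}(\mZ)$ and the two centralizers coincide; the paper records exactly this by noting that Ash's generators $-\Id$, $A$, $(A^k-1)/(A-1)$ of the centralizer all lie in $SL_{p-1}(\mZ)$. Part (i) survives your error, since any finite-index subgroup of $\mZ/p\times\mZ/2\times\mZ^{\frac{p-3}{2}}$ still has a positive-rank free abelian summand and hence rationally acyclic quotientless Euler characteristic zero. Part (ii) does not survive: a $GL$-conjugacy class of an order-$p$ element splits into two $SL$-classes precisely when $C_{GL}\langle A\rangle$ contains \emph{no} element of determinant $-1$. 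Under your (false) premise the classes would not split, you would get $\frac{p^n-1}{p-1}\,\vert\Cl(\mQ(\zeta_p))\vert$ nontrivial classes rather than twice that number, and your heuristic (``a diagonal sign change is $SL$-realizable up to conjugacy'') points in exactly the wrong direction.

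For comparison, the paper packages the correct count by introducing the set $\Cl(\mQ(\zeta_p))^{+}$ of \emph{oriented} ideal classes (an ideal class together with an $SL$-orbit of bases), refining the Latimer--MacDuffee bijection to $SL_{p-1}(\mZ)\bs SL_{p-1}(\mZ)_{1,p}\cong\Cl(\mQ(\zeta_p))^{+}$, and using that every fractional ideal has exactly two orientation classes of bases --- which again rests on all units having norm $+1$ --- to obtain $\vert\Cl^{+}\vert=2\vert\Cl\vert$ before running the same Galois-action and class-equation bookkeeping as in the $GL$ case. If you repair the centralizer claim (the determinant is identically $+1$ there), your semidirect-product route does yield the splitting into two $SL$-classes and is essentially equivalent to the paper's argument.
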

\begin{proof}
  Given an integral ideal $I$ of $\mathbb{Q}(\zeta_p)$
and two bases $(x_1,\dots, x_{p-1})$ and $(y_1, \dots, y_{p-1})$ for $I$ (i.e., $I=\mZ x_1+\dots+\mZ x_{p-1}=\mZ y_1+\dots+\mZ y_{p-1}$), we say that $(x_1,\dots, x_{p-1})$ and $(y_1, \dots, y_{p-1})$ have the same orientation if there exists $C \in SL_{p-1}(\mZ)$ such that 
\[C\begin{pmatrix} x_1 \\ \vdots \\x_{p-1}  \end{pmatrix}=\begin{pmatrix} y_1 \\ \vdots \\y_{p-1}  \end{pmatrix}.\]
Let $\Cl(\mathbb{Q}(\zeta_p))^{+}$ denote the set of oriented ideal classes. An element of $\Cl(\mathbb{Q}(\zeta_p))^{+}$ is an equivalence class of pairs $(I,[x_1,\dots, x_{p-1}])$, where $I$ is an integral ideal and $[x_1,\dots, x_{p-1}]$ an orientation class of a basis. A pair $(I,[x_1,\dots, x_{p-1}])$ is equivalent to $(J,[y_1,\dots, y_{p-1}])$ if and only if there exist $r,s \in \mZ[\zeta_p]\setminus \{0\} $ such that $rI=sJ$ and $(rx_1,\dots, rx_n)$ and $(sy_1, \dots, sy_n)$ have the same orientation. 
The bijection $I : GL_{p-1}(\mZ) \bs GL_{p-1}(\mZ)_{1,p} \to \Cl(\mathbb{Q}(\zeta_p))$ from the proof of Proposition \ref{prop:Morava E GL} induces a well-defined bijection
\[I^{+} : SL_{p-1}(\mZ) \bs SL_{p-1}(\mZ)_{1,p} \to \Cl(\mathbb{Q}(\zeta_p))^{+}\]
sending a class of $A$ to the class of $(\mZ x_1+\dots+\mZ x_{p-1}, [x_1,\dots, x_{p-1}])$, where $(x_1, \dots, x_{p-1})$ is an eigenvector of $A$ with respect to the eigenvalue $\zeta_p$ (recall $I(A)$ is the class of the fractional ideal $\mZ x_1+\dots+\mZ x_{p-1}$). 
Since every fractional ideal has exactly two equivalence classes of bases
it follows that $\vert \Cl(\mathbb{Q}(\zeta_p))^{+} \vert=2\vert \Cl(\mathbb{Q}(\zeta_p)) \vert$.
By \cite[Lemma 4]{Ash} the centralizer of $A$ in $GL_{p-1}(\mZ)$ is generated by
\[ \{-\Id,\;  A,\;  \tfrac{A^k-1}{A-1} \; \vert \; 1 \leq i \leq p-1, \;\; 2 \leq k < p/2 \}.\]
All these matrices are in fact elements of $SL_{p-1}(\mZ)$,
so that the centralizers of $A$ in $GL_{p-1}(\mZ)$ and $SL_{p-1}(\mZ)$ agree.
The rest follows exactly as in Subsection \ref{subsec:GLZ} for $GL_{p-1}(\mZ)$,
using the action of the Galois group $\Delta=\Gal(\mathbb{Q}(\zeta_p)\colon \mathbb{Q})$
on $\Cl(\mathbb{Q}(\zeta_p))^{+}$ and a formula analogous to \eqref{eq:conj class}.
\end{proof}

\begin{remark} By \cite{Hor}, we have $\chi_{\mathbb{Q}}(BSL_{p-1}(\mZ))=0$, for $p\geq 13$. The paper \cite{LeeSzcz} computes the rational cohomology 
\[H^n(SL_4(\mZ); \mathbb{Q})=\begin{cases} \mathbb{Q}, \;n=0,3,\\
0, \; \text{otherwise},\\
\end{cases}\]
implying that $\chi_{\mathbb{Q}}(BSL_{4}(\mZ))=0$. Next, the rational cohomology for $SL_6(\mZ)$ is computed in \cite{EVGS}:
\[H^n(SL_6(\mZ); \mathbb{Q})=\begin{cases} \mathbb{Q} \oplus \mathbb{Q}, \;n=5,\\
\mathbb{Q} , \; n=0,8,9,10,\\
0,\; \text{otherwise}.\\
\end{cases}\]
This implies  $\chi_{\mathbb{Q}}(BSL_{6}(\mZ))=0$.
We are not aware of a calculation of $\chi_{\mathbb{Q}}(BSL_{10}(\mZ))$,
and to our knowledge $H^n(SL_{p-1}(\mZ); \mathbb{Q})$ has not been computed for $p \geq 11$.
Hence Proposition \ref{prop:Euler char SL and Morava E SL}
only gives a full computation of $L(E^*)\tensor_{E^*} E^*(BSL_{p-1}(\mZ))$ when $p=5,7$. 
\end{remark}

\subsection{The symplectic group \texorpdfstring{$Sp_{p-1}(\mZ)$} {Sp(p-1,Z)} for a prime \texorpdfstring{$p\geq 5$}{p>=5}}\label{subsec:SpZ}

In this subsection we apply the main results of this paper to the symplectic group $Sp_{p-1}(\mZ)$. The main references for the conjugacy classification of $p$-subgroups of $Sp_{p-1}(\mZ)$ are \cite{Busch} and \cite{SjYang}. The arguments are very analogous to those in the previous two subsections. Again, $Sp_{p-1}(\mZ)$ has finite model of $\underline{E}G$
as an arithmetic group \cite[Theorem 3.2]{Ji}. 

\begin{prop} \label{prop:Euler char Sp and Morava E Sp} Let $p\geq 5$ be a prime. 
  \begin{enumerate}[\em (i)]
  \item  For any integer $n \geq 0$,
\[\chi_{K(n)}(BSp_{p-1}(\mZ))=\chi_{\mathbb{Q}}(BSp_{p-1}(\mZ))+2^{\frac{p-1}{2}}\cdot h_p^{-} \cdot \tfrac{p^n-1}{p-1} .\]
\item The character map gives an isomorphism
  \[ L(E^*)\tensor_{E^*} E^*(BSp_{p-1}(\mZ)) \cong H^*(BSp_{p-1}(\mZ); L(E^*)) \oplus F, \]
  where $F$ is a free $L(E^*)$-module in even degrees
  of rank $ 2^{\frac{p-1}{2}}\cdot h_p^{-} \cdot\frac{p^n-1}{p-1}$.
\end{enumerate}
Here 
\[h_p^{-}=\frac{\vert \Cl(\mathbb{Q}(\zeta_p) \vert }{\vert \Cl(\mathbb{Q}(\zeta_p+\zeta_p^{-1})\vert}\] 
is the relative class number. 
\end{prop}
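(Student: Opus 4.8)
The strategy is to follow the template established in Subsections \ref{subsec:GLZ} and the special linear case: reduce the computation of $\chi_{K(n)}(BSp_{p-1}(\mZ))$ via Corollary \ref{cor:euler morava quotients} to counting conjugacy classes of $p$-power order tuples and determining their centralizers. First I would recall from \cite{Busch} and \cite{SjYang} the classification of finite $p$-subgroups of $Sp_{p-1}(\mZ)$ for $p\geq 5$: the only $p$-power order elements have order $p$, every elementary abelian $p$-subgroup has rank $\leq 1$, so each nontrivial finite $p$-subgroup is isomorphic to $\mZ/p$. Hence any tuple $(A_1,\dots,A_n)\in Sp_{p-1}(\mZ)_{n,p}$ with some $A_i\neq 1$ generates a single subgroup of order $p$, and $C\td{A_1,\dots,A_n}=C\td{A_1}$ for the unique nontrivial $A_i$. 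The key input from symplectic theory is the description of these centralizers: an order-$p$ element $A\in Sp_{p-1}(\mZ)$ makes $\mZ^{p-1}$ a module over $\mZ[\zeta_p]$ with a compatible symplectic form, which forces the centralizer to be (up to finite index and rational acyclicity issues) a unitary-type group — in particular one checks, as in \cite{SjYang}, that $C\td{A}$ is virtually a product of a finite group with a free abelian group, so $\chi_{\mathbb Q}(BC\td{A})=0$.

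**Counting.** The main number-theoretic step is to count $\vert Sp_{p-1}(\mZ)\bs Sp_{p-1}(\mZ)_{1,p}\vert$. By \cite{Busch, SjYang}, conjugacy classes of order-$p$ elements of $Sp_{p-1}(\mZ)$ are in bijection with isomorphism classes of rank-one $\mZ[\zeta_p]$-modules equipped with the extra symplectic datum; this set is governed by the class group of $\mZ[\zeta_p]$ modulo the action of the maximal real subfield, together with a sign/orientation contribution. Concretely the count comes out to $2^{(p-1)/2}\cdot h_p^{-}$ conjugacy classes of subgroups of order $p$, where $h_p^{-}=\vert\Cl(\mathbb Q(\zeta_p))\vert/\vert\Cl(\mathbb Q(\zeta_p+\zeta_p^{-1}))\vert$ is the relative class number — the factor $2^{(p-1)/2}$ arising from the choices of symplectic structure (orientations of a rank-one module, one binary choice for each of the $(p-1)/2$ pairs of complex places). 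Then, exactly as in formula \eqref{eq:conj class}, combining the Galois action of $\Delta=\Gal(\mathbb Q(\zeta_p)\colon\mathbb Q)$ (acting with $p-1$ elements, all stabilizers trivial on the relevant set so that $\vert N(H)/C(H)\vert = p-1$) with the class equation gives
\[ \vert Sp_{p-1}(\mZ)\bs Sp_{p-1}(\mZ)_{n,p}\vert - 1 \;=\; 2^{\frac{p-1}{2}}\cdot h_p^{-}\cdot\tfrac{p^n-1}{p-1}. \]

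**Assembling the two statements.** For (i), apply Corollary \ref{cor:euler morava quotients}: since $\chi_{\mathbb Q}(BC\td{A_1,\dots,A_n})=0$ for all nontrivial tuples and equals $1$ only for the trivial tuple (whose centralizer is all of $Sp_{p-1}(\mZ)$), the right-hand sum collapses to $\chi_{\mathbb Q}(BSp_{p-1}(\mZ)) + (\vert Sp_{p-1}(\mZ)\bs Sp_{p-1}(\mZ)_{n,p}\vert - 1)$, which by the count above is the claimed formula. For (ii), apply Corollary \ref{cor:Characters for BG}: the factor indexed by the trivial tuple contributes $H^*(BSp_{p-1}(\mZ);L(E^*))$, while each nontrivial conjugacy class contributes $H^*(BC\td{A_1,\dots,A_n};L(E^*))$. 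Since each such centralizer is rationally equivalent to a torus (its maximal free abelian part), and since the number of these classes is $2^{(p-1)/2}h_p^{-}\cdot\tfrac{p^n-1}{p-1}$, one needs to identify the torus rank: the unit group controlling $C\td{A}$ has rank equal to the rank of the unit group of $\mZ[\zeta_p]$ less the rank absorbed by the symplectic condition, and I expect this to vanish rationally only in the weak sense that $H^*$ of each factor is free over $L(E^*)$ — so $F$ is free in even degrees of the stated rank. The main obstacle is the second step above: pinning down precisely the centralizer of an order-$p$ element in $Sp_{p-1}(\mZ)$ as a $\mZ[\zeta_p]$-unitary group, showing its classifying space is rationally acyclic (so it contributes nothing to $\chi_{\mathbb Q}$) while keeping track of the orientation data that produces the factor $2^{(p-1)/2}$ in the conjugacy count — this is where the arguments of \cite{Busch} and \cite{SjYang} must be invoked carefully rather than merely cited.
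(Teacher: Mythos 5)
Your overall strategy (classify the $p$-subgroups, count conjugacy classes of order-$p$ elements via the $(I,a)$-pairs of \cite{SjYang} to get $2^{(p-1)/2}h_p^-$, propagate to $n$-tuples by a formula analogous to \eqref{eq:conj class}, and assemble via Corollaries \ref{cor:euler morava quotients} and \ref{cor:Characters for BG}) matches the paper's proof. But your centralizer computation is wrong, and it is exactly the step where the symplectic case differs from $GL_{p-1}(\mZ)$ and $SL_{p-1}(\mZ)$. You assert that $C\td{A}$ is ``virtually a product of a finite group with a free abelian group, so $\chi_{\mathbb Q}(BC\td{A})=0$.'' If that were so, Corollary \ref{cor:euler morava quotients} would give $\chi_{K(n)}(BSp_{p-1}(\mZ))=\chi_{\mathbb Q}(BSp_{p-1}(\mZ))$ with \emph{no} correction term, contradicting the statement you are proving; and in part (ii) each nontrivial factor would be $(L(E^*)\oplus L(E^*)[1])^{\otimes k}$ for some $k\geq 1$, which is neither even nor of rank one, so $F$ would not have the claimed form. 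Your ``Assembling'' paragraph is internally inconsistent on this point: you declare $\chi_{\mathbb Q}(BC\td{A_1,\dots,A_n})=0$ for nontrivial tuples and then count each such class as contributing $1$ to the sum.

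The correct input, from \cite[Section 3.2]{Busch} and \cite{KBro1}, is that the centralizer of an order-$p$ element of $Sp_{p-1}(\mZ)$ is \emph{finite}, isomorphic to $\mZ/p\times\mZ/2$. Conceptually: the centralizer sits inside $\mZ[\zeta_p]^\times$ (as for $GL_{p-1}(\mZ)$, where it is all of $\mZ/p\times\mZ/2\times\mZ^{(p-3)/2}$), but the requirement of preserving the symplectic form forces $u\bar u=1$, and by the Dirichlet unit theorem the norm-one units for complex conjugation are exactly the roots of unity $\pm\zeta_p^i$. So the free abelian part of rank $(p-3)/2$ present in the $GL$/$SL$ cases is killed, each nontrivial centralizer is rationally acyclic with $\chi_{\mathbb Q}(BC\td{A})=1$ and $H^*(BC\td{A};L(E^*))\cong L(E^*)$ in degree $0$, and both parts of the proposition then follow from the count $2^{(p-1)/2}h_p^-\cdot\frac{p^n-1}{p-1}$ of nontrivial conjugacy classes of tuples. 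You need to replace your centralizer paragraph with this finiteness statement (with the references above); the rest of your argument then goes through.
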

\begin{proof}
  Since $Sp_{p-1}(\mZ) \leq SL_{p-1}(\mZ)$, we know that any non-trivial $p$-subgroup
  of $Sp_{p-1}(\mZ)$ is isomorphic to $\mZ/p$. Using a similar construction
  as in the proof of Proposition \ref{prop:Morava E GL},
  the paper \cite{SjYang} shows that the conjugacy classes of the elements of order $p$ in $Sp_{p-1}(\mZ)$ biject with the set of equivalence classes of pairs $(I, a)$, where $I$ is an integral ideal and $I \cdot \overline{I}=(a)$, where $a=\overline{a}$. It follows form \cite[Theorem 3]{SjYang} that the number of such equivalence classes is equal to $2^{\frac{p-1}{2}}h_p^{-}$. By \cite[Section 3.2]{Busch} and \cite{KBro1}, we know that for any element $A$ of order $p$, the centralizer of $A$ is finite and
\[C\td{A} \cong \mZ/p \times \mZ/2.\] 
The Galois group $\Delta=\Gal(\mathbb{Q}(\zeta_p)\colon\mathbb{Q})$
acts on the set of equivalence classes of pairs $(I, a)$ and very similar arguments as in Subsection \ref{subsec:GLZ},
including a formula analogous to \eqref{eq:conj class},
imply that the cardinality of the set of equivalence classes $Sp_{p-1}(\mZ) \bs (Sp_{p-1}(\mZ)_{n,p}-\{(1,\dots,1)\})$ is equal to the number $\frac{p^n-1}{p-1}\cdot 2^{\frac{p-1}{2}}\cdot h_p^{-}$.
\end{proof}

\begin{remark} The paper \cite{LeeBrownst} computes the rational cohomology 
\[H^n(Sp_4(\mZ); \mathbb{Q})=\begin{cases} \mathbb{Q}, \;n=0,2,\\
0, \; \text{otherwise},\\
\end{cases}\]
implying that $\chi_{\mathbb{Q}}(BSp_{4}(\mZ))=2$. Next, the rational cohomology for $Sp_6(\mZ)$ is computed in \cite{Hain}
\[H^n(Sp_6(\mZ); \mathbb{Q})=\begin{cases} \mathbb{Q}, \;n=0,2,4,\\
 \mathbb{Q} \oplus \mathbb{Q}, \; n=6,\\
0,\; \text{otherwise}.\\
\end{cases}\]
This implies  $\chi_{\mathbb{Q}}(BSp_{6}(\mZ))=5$. We are not aware of a calculation
of $H^n(Sp_{p-1}(\mZ); \mathbb{Q})$ for $p \geq 11$.
Hence Proposition \ref{prop:Euler char Sp and Morava E Sp} only gives a full computation of $L(E^*)\tensor_{E^*} E^*(BSL_{p-1}(\mZ))$ when $p=5,7$. 

To our knowledge $\chi_{\mathbb{Q}}(BSp_{2l}(\mZ))$ is only known for $l \leq 9$,
see \cite[Appendix]{Hulek-Tommasi}.
For example when $p=19$, we have $\chi_{\mathbb{Q}}(BSp_{18}(\mZ))=528$. The class number of the cyclotomic field $\mathbb{Q}(\zeta_n)$ is equal to 1 for $n \leq 22$. Hence, by Proposition \ref{prop:Euler char Sp and Morava E Sp}, we get
\[\chi_{K(n)}(BSp_{18}(\mZ))=528+2^9\cdot \tfrac{19^n-1}{18}=\tfrac{256 \cdot 19^n+4496}{9}\]
at the prime $p=19$.
Computations for lower primes are done similarly using a list
for $\chi_{\mathbb{Q}}(BSp_{2l}(\mZ))$ in \cite[Appendix]{Hulek-Tommasi}. 
\end{remark}

\subsection{The mapping class group \texorpdfstring{$\Gamma_{\frac{p-1}{2}}$}{Gamma} for a prime \texorpdfstring{$p\geq 5$}{p>=5}}\label{subsec:gamma}

Let $\Gamma_{\frac{p-1}{2}}$ be the mapping class group
of the closed oriented surface of genus $(p-1)/2$.
The papers \cite{Broughton, JiWol, Mislin} show that mapping class groups
admit a finite model for $\underline{E}G$.
The calculations in this subsection are based on results from Chapter III
of Xia's thesis \cite{Xia1}.

It follows from \cite[Chapter III]{Xia1} that any non-trivial finite $p$-subgroup of $\Gamma_{\frac{p-1}{2}}$ is isomorphic to $\mZ/p$. One considers two cases:

\emph{Case 1. $p=6k-1$:} By \cite[Corollary 3.3.2]{Xia1} the number of conjugacy classes of non-trivial $p$-subgroups is equal to $(p+1)/6$,
for any such subgroup $H$, the normalizer $N(H)$ is finite and we have $N(H)/C(H)=1$.
Hence, using a formula analogous to \eqref{eq:conj class},
we obtain 
\[ \vert \Gamma_{\frac{p-1}{2}} \bs (\Gamma_{\frac{p-1}{2}})_{n,p}\vert = \tfrac{p+1}{6}(p^n-1)+1.\]

\emph{Case 2. $p=6k+1$:} By \cite[Corollary 3.3.2]{Xia1} the number of conjugacy classes of non-trivial $p$-subgroups is equal to $(p+5)/6$.
For any such subgroup $H$, the normalizer $N(H)$ is finite. For one conjugacy class we have $N(H)/C(H)=3$ and $N(H)/C(H)=1$ for all the others. Hence, again using a similar argument as in Subsection \ref{subsec:GLZ}, we obtain 
\[ \vert \Gamma_{\frac{p-1}{2}} \bs (\Gamma_{\frac{p-1}{2}})_{n,p} \vert = (\tfrac{p+5}{6}-1)(p^n-1)+\tfrac{p^n-1}{3}+1=\tfrac{p+1}{6}(p^n-1)+1.\]

Combining these results with Corollary \ref{cor:euler morava quotients} and Corollary \ref{cor:Characters for BG}, we obtain

\begin{prop} \label{prop:Euler char Gamma and Morava E Gamma}
  Let $p\geq 5$ be a prime. 
  \begin{enumerate}[\em (i)]
  \item For any integer $n \geq 0$,
    \[\chi_{K(n)}(B\Gamma_{\frac{p-1}{2}})=\chi_{\mathbb{Q}}(B\Gamma_{\frac{p-1}{2}})+\tfrac{(p^n-1)(p+1)}{6} .\]
  \item  The character map gives an isomorphism
    \[ L(E^*)\tensor_{E^*} E^*(B\Gamma_{\frac{p-1}{2}}) \cong H^*(B\Gamma_{\frac{p-1}{2}}; L(E^*)) \oplus F, \]
  where $F$ is a free $L(E^*)$-module in even degrees of rank $(p^n-1)(p+1)/6$.
\end{enumerate}
\end{prop}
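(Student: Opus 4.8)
The plan is to obtain both parts by feeding the conjugacy count of tuples established just above into Corollary \ref{cor:euler morava quotients} and Corollary \ref{cor:Characters for BG}. The preparatory step is the group-theoretic input from \cite[Chapter III]{Xia1}: every non-trivial finite $p$-subgroup of $\Gamma_{\frac{p-1}{2}}$ is cyclic of order $p$ and has finite normalizer. Since any tuple $(g_1,\dots,g_n)\in(\Gamma_{\frac{p-1}{2}})_{n,p}$ generates a finite abelian $p$-group, the subgroup $\td{g_1,\dots,g_n}$ is either trivial or isomorphic to $\mZ/p$; in the latter case $C\td{g_1,\dots,g_n}$ is a subgroup of the (finite) normalizer of that $\mZ/p$, hence finite. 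Consequently, for every tuple other than $(1,\dots,1)$ the classifying space $B C\td{g_1,\dots,g_n}$ is rationally acyclic, so $\chi_\mQ(B C\td{g_1,\dots,g_n})=1$, and $H^*(B C\td{g_1,\dots,g_n};L(E^*))$ is free of rank $1$ over $L(E^*)$, concentrated in degree $0$, because $L(E^*)$ is a $\mathbb{Q}$-algebra. The tuple $(1,\dots,1)$ has centralizer all of $\Gamma_{\frac{p-1}{2}}$.

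For part (i), Corollary \ref{cor:euler morava quotients} gives
\[ \chi_{K(n)}(B\Gamma_{\frac{p-1}{2}}) \ = \ \sum_{[g_1,\dots,g_n]\in \Gamma_{\frac{p-1}{2}}\bs(\Gamma_{\frac{p-1}{2}})_{n,p}} \chi_\mQ(B C\td{g_1,\dots,g_n}). \]
Isolating the summand of $(1,\dots,1)$, the right hand side equals $\chi_\mQ(B\Gamma_{\frac{p-1}{2}})$ plus the number of non-trivial conjugacy classes, i.e.\ $\vert\Gamma_{\frac{p-1}{2}}\bs(\Gamma_{\frac{p-1}{2}})_{n,p}\vert-1$. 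By the two case computations preceding the proposition — using a formula analogous to \eqref{eq:conj class} together with \cite[Corollary 3.3.2]{Xia1} — this count equals $\tfrac{p+1}{6}(p^n-1)$ in both the $p=6k-1$ and the $p=6k+1$ case, which gives the asserted identity.

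For part (ii), Corollary \ref{cor:Characters for BG} identifies $L(E^*)\tensor_{E^*}E^*(B\Gamma_{\frac{p-1}{2}})$ with $\prod_{[g_1,\dots,g_n]} H^*(B C\td{g_1,\dots,g_n};L(E^*))$. The factor indexed by $(1,\dots,1)$ is $H^*(B\Gamma_{\frac{p-1}{2}};L(E^*))$, and by the first paragraph each of the remaining factors is a copy of $L(E^*)$ in degree $0$. Taking $F$ to be the product of these remaining factors, it is a free graded $L(E^*)$-module concentrated in degree $0$ (hence in even degrees) of rank $\vert\Gamma_{\frac{p-1}{2}}\bs(\Gamma_{\frac{p-1}{2}})_{n,p}\vert-1=(p^n-1)(p+1)/6$, as claimed.

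The real content is not in this bookkeeping but in the preceding group theory drawn from Xia's thesis: the classification of finite $p$-subgroups, the finiteness of their normalizers, and especially the point that for $p=6k+1$ exactly one conjugacy class of order-$p$ subgroups $H$ satisfies $\vert N(H)/C(H)\vert=3$ while all the others have $N(H)=C(H)$ — which is precisely what forces the two case counts to coincide. Given that input, the argument above is a routine application of the two corollaries.
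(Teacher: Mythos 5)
Your proposal is correct and follows the paper's own route: the paper likewise takes the conjugacy-class counts from Xia's thesis (computed in the two cases $p=6k\pm 1$ immediately before the proposition, both yielding $\tfrac{p+1}{6}(p^n-1)+1$), observes that the relevant centralizers are finite and hence rationally acyclic, and then simply combines this with Corollary \ref{cor:euler morava quotients} and Corollary \ref{cor:Characters for BG}. Your write-up just makes explicit the bookkeeping that the paper leaves to the reader.
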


\begin{remark}
  In the case $p=5$, the rational cohomology of $\Gamma_2$ is well known,
  see for example \cite{Kawa}:
\[H^n(\Gamma_2; \mathbb{Q})=\begin{cases} \mathbb{Q}, \;n=0,\\
0, \; \text{otherwise}.\\
\end{cases}\]
Next, the rational cohomology for $\Gamma_3$ is computed in \cite{Looijenga}
\[H^n(\Gamma_3; \mathbb{Q})=\begin{cases} \mathbb{Q}, \;n=0,1,6,\\
0,\; \text{otherwise}.\\
\end{cases}\]
Hence Proposition \ref{prop:Euler char Gamma and Morava E Gamma} gives a full computation of $L(E^*)\tensor_{E^*} E^*(B\Gamma_{\frac{p-1}{2}})$ when $p=5,7$. 

Harer and Zagier in \cite{HZ} give a general formula for $\chi_{\mathbb{Q}}(\Gamma_{g})$ for any genus $g$. For example, when $p=31$, we have  $\chi_{\mathbb{Q}}(\Gamma_{15})=717766$ and hence
\[\chi_{K(n)}(B\Gamma_{15})=717766+\tfrac{16(31^n-1)}{3}=\tfrac{16 \cdot 31^n+2153282}{3}. \]
For lower primes we get similar formulas using the table in \cite[\S 6]{HZ}. 
\end{remark}

\providecommand{\bysame}{\leavevmode\hbox to3em{\hrulefill}\thinspace}

\end{document}